\numberwithin{figure}{section}
\definecolor{my-blue}{rgb}{0.0,0.0,0.6}
\definecolor{my-red}{rgb}{0.5,0.0,0.0}
\definecolor{my-green}{rgb}{0.0,0.5,0.0}
\definecolor{nicos-red}{rgb}{0.75,0.0,0.0}
\newtheorem{theorem}{\sc Theorem}[section]
\newtheorem{lemma}[theorem]{\sc Lemma}
\newtheorem{proposition}[theorem]{\sc Proposition}
\newtheorem{corollary}[theorem]{\sc Corollary}
\newtheorem{definition}[theorem]{\sc Definition}
\numberwithin{equation}{section}
\theoremstyle{remark}
\newcommand{\be}{\begin{equation}}
\newcommand{\ee}{\end{equation}}
\newcommand{\beq}{\begin{equation}}
\newcommand{\eeq}{\end{equation}}
\providecommand{\abs}[1]{\vert#1\vert}
\def\cC{\mathcal{C}}  \def\cD{\mathcal{D}}
\def\cF{\mathcal{F}}
\def\cG{\mathcal{G}}
\def\cR{\mathcal{R}}
\def\cB{\mathcal{B}}
\def\cY{\mathcal{Y}}
\def\kS{\mathfrak{S}}
\def\bE{\mathbb{E}} 
\def\bN{\mathbb{N}}
\def\bP{\mathbb{P}}
\def\bR{\mathbb{R}}
\def\bZ{\mathbb{Z}}
 \def\Z{\bZ}  \def\R{\bR}\def\N{\bN}
\def\zvec{\mathbf{z}}
\def\w{\omega}
\def\m1{\mathbf{1}}
 \def\wt{\widetilde}    \def\wc{\widecheck}
\def\E{\bE}
\def\P{\bP} 
\def\OSP{(\Omega, \kS, \P)}
\def\funct lp{L} 
\def\funct lpbar{\bar L} 
\def\dr{\mathcal{DR}}
\def\Uset{\mathcal U}
\def\Gpp{G}
\def\og{{\preceq}}
\def\ogs{{\prec}}
\def\wg{{w}}
\DeclareMathOperator{\Var}{Var}
\DeclareMathOperator{\ri}{ri}    
\newcommand{\argmax}[1]{\underset{#1}{\arg\max}\;}
\newcommand{\rhodown}[1]{{\underline{#1\mkern-3mu}\mkern4mu}}
\newcommand{\rhoup}[1]{{\bar{#1}}}
\definecolor{darkgreen}{rgb}{0.0,0.5,0.0}
\definecolor{darkblue}{rgb}{0.0,0.0,0.3}
\definecolor{nicosred}{rgb}{0.65,0.1,0.1}
\definecolor{light-gray}{gray}{0.7}
\def\cif1{v}   
\def\deq{\overset{d}=}
   \def\tincr{t}   
\def\Xw{X}  
\def\Yw{\omega}  
\def\Rb{\mathcal{R}}
\def\Sf{\mathcal{F}}
\def\pr{\mathcal{P}}
\def\st{\mathcal{S}}
\def\rec{\bold{R}}
\def\pc{p_c}
\def\Ra{\mathfrak{R}_\alpha}
\def\rim#1{\widehat{#1}} 
\def\ct{o}
\def\Dop{D}    
\def\Rop{R}  
\def\Sop{S}    
\def\arr{a} 
\def\barr{b}
\def\serv{s}
\def\depa{d} 
\def\sojo{t}
\def\arrv{\mathbf\arr}  \def\barrv{\mathbf\barr} 
\def\servv{\mathbf\serv}
\def\depav{\mathbf\depa} 
\def\sojov{\mathbf\sojo}
\newcommand{\lzb}{\llbracket}   
\newcommand{\rzb}{\rrbracket}   
\newcommand\abullet{{\scaleobj{0.6}{\bullet}}}  
\newcommand\bbullet{{\raisebox{0.5pt}{\scaleobj{0.6}{\bullet}}}} 
\newcommand\brbullet{{\raisebox{-1pt}{\scaleobj{0.5}{\bullet}}}} 
\newcounter{usedm}
\newcounter{usedn}
\newcommand*{\lppwl}[6]{
	
	\FPeval{\m}{round(#5*(#3-#1)/(#3+#4-#1-#2),0)}
	
	\FPeval{\n}{round(#5*(#4-#2)/(#3+#4-#1-#2),0)}
	
	\FPeval{\stlength}{(#3+#4-#1-#2)/#5}
	
	\setcounter{usedm}{0}
	\setcounter{usedn}{0}
	
	\foreach\i in{1,...,{#5}}{
		\FPrandom{\x}
		
		\FPeval{\y}{(\m-\theusedm)/(\m+\n-\theusedm-\theusedn)}
		
		\FPeval{\startx}{#1+(\theusedm)*\stlength}
		\FPeval{\starty}{#2+(\theusedn)*\stlength}
		
		\ifthenelse{\lengthtest{\x pt > \y pt}}{
			\FPset{\endx}{\startx}
			\FPadd{\endy}{\starty}{\stlength}
			\stepcounter{usedn}
		}
		{
			\FPadd{\endx}{\startx}{\stlength}
			\FPset{\endy}{\starty}
			\stepcounter{usedm}
		}
		\draw[#6](\startx,\starty) -- (\endx,\endy);
	};
}
\begin{document}

\title{Local stationarity of exponential last passage percolation}

\author[M.~Bal\'azs]{M\'arton Bal\'azs}
\address{M\'arton Bal\'azs\\ University of Bristol\\  School of Mathematics\\ Fry Building\\ Woodland Rd.\\   Bristol BS8 1UG\\ UK.}
\email{m.balazs@bristol.ac.uk}
\urladdr{https://people.maths.bris.ac.uk/~mb13434/}
\thanks{M.\ Bal\'azs was partially supported by EPSRC's EP/R021449/1 Standard Grant.}

\author[O.~Busani]{Ofer Busani}
\address{Ofer Busani\\ University of Bristol\\  School of Mathematics\\ Fry Building\\ Woodland Rd.\\   Bristol BS8 1UG\\ UK.}
\email{o.busani@bristol.ac.uk}
\urladdr{https://people.maths.bris.ac.uk/~di18476/}
\thanks{O. Busani was supported by EPSRC's EP/R021449/1 Standard Grant.}

\author[T.~Sepp\"al\"ainen]{Timo Sepp\"al\"ainen}
\address{Timo Sepp\"al\"ainen\\ University of Wisconsin-Madison\\  Mathematics Department\\ Van Vleck Hall\\ 480 Lincoln Dr.\\   Madison WI 53706-1388\\ USA.}
\email{seppalai@math.wisc.edu}
\urladdr{http://www.math.wisc.edu/~seppalai}
\thanks{T.\ Sepp\"al\"ainen was partially supported by  National Science Foundation grant  DMS-1854619 and by  the Wisconsin Alumni Research Foundation.}

\keywords{local stationarity, coalescence, corner growth model, directed percolation, geodesic, random growth model, last-passage percolation, queues}
\subjclass[2010]{60K35, 60K37} 
\date{\today} 
\begin{abstract}
	We consider point to point last passage times to every vertex in a neighbourhood of size $\delta N^{\nicefrac{2}{3}}$, distance $N$ away from the starting point. The increments of these last passage times in this neighbourhood are shown to be \emph{jointly equal} to their stationary versions with high probability that depends on $\delta$ only. With the help of this result we  show that\\
	1) the $\text{Airy}_2$ process is locally close to a Brownian motion in total variation;\\
	2) the tree of point to point geodesics starting from every vertex in a box of side length $\delta N^{\nicefrac{2}{3}}$ going to a point at distance $N$ agree inside the box with the tree of infinite geodesics going in the same direction;\\
	3) two geodesics starting from $N^{\nicefrac{2}{3}}$ away from each other, to a point at distance $N$ will not coalesce too close to either endpoints on the macroscopic scale.\\
	Our main results rely on probabilistic methods only. 	   
\end{abstract}
\maketitle

\tableofcontents

	\newcommand{\geodesic}[3]
	{
	\begin{scope}[shift={#1},rotate=#2]
	\draw [scale=3](0,0)
	\foreach \x in {1,...,#3}
	{   to [out=90-20,in=180+20] ++(1/#3,1/#3) to [out=20,in=270-20] ++(1/#3,1/#3)
	};
	\end{scope}
	}
	
%


 \section{Introduction.}
  Last Passage Percolation (LPP) belongs to the KPZ universality class  where models of random surface growth exhibit height and transversal fluctuations exponent of order  $\nicefrac{1}{3}$  and $\nicefrac{2}{3}$ respectively. The different models in the KPZ universality class are believed to have the same limiting behaviour under this scaling. The LPP with exponential weights belongs to the set of models in the KPZ universality class that are exactly solvable, or integrable. For models in this group, one can obtain closed form expressions for  their prelimiting statistics. Coupling this with techniques from combinatorics, representation and random matrix theory, one can take the limits of the prelimiting  expression to obtain the statistics of the limiting object. By the KPZ universality conjecture, this should be the limit of all models in the KPZ universality class.
  
   One of the interesting questions about the model is its local prelimiting fluctuations. To make this more concrete, let $G_x$ be the last passage time between the points $(0,0)$ and $x$. Define
  \begin{align}\label{bc}
  L^N_{(x,y)}=G_{(N,N)+y}-G_{(N,N)+x}.
  \end{align}
  It is known that if $|x|,|y|=O(1)$, then $L^N$ should be close to a stationary cocycle called Busemann function \cite{geor-rass-sepp-17-buse}. In fact, these stationary cocycles are defined as, roughly speaking, the limits when $N$ is taken to infinity in \eqref{bc}. Busemann functions can be thought of as the extension of the stationary LPP to the whole lattice and play a major role in the study of infinite geodesics. 
  
  The main contribution of this paper is to show  the convergence in total variation of $L^N$ to the Busemann function when $|x|,|y|\leq \delta N^\frac{2}{3}$ and $\delta$ goes to $0$. Moreover, the results are quantitative; we show that the decay of the error is polynomial in $\delta$. We stress that this cannot be simply obtained by using the  'Crossing Lemma'. Indeed, in order to compare the LPP increments to those of the stationary LPP one must tweak the intensity of the stationary LPP by order of $N^{-\frac{1}{3}}$ such that the error of the approximation along each edge is of the order of  $N^{-\frac{1}{3}}$ as well. Therefore, a simple union bound on the different $N^\frac{2}{3}$ edges will give $N^\frac{2}{3}N^{-\frac{1}{3}}=N^\frac{1}{3}$ and will not work. In a recent work, Fan and Sepp{\"a}l{\"a}inen \cite{fan-sepp-arxiv} obtained a coupling of different Busemann functions using queueing mapping. We use  new insights on this coupling to obtain the result which we refer to as \textit{local stationarity}. The rest of our results are applications of local stationarity to questions about the $\text{Airy}_2$ process and geodesics. 
   
  LPP can be viewed as a $1+1$ dimensional growing surface, and also as a Markov process that takes values in the space of continuous functions. Using the $1:2:3$ KPZ scaling, the conjectural limit of this Markov process is believed to be the KPZ-fixed point \cite{matetski2016kpz}. An extension of this limiting object was shown to exist recently in \cite{dauvergne2018directed}. In \cite{joha-03} Johansson showed the convergence of the spatial fluctuations to the $\text{Airy}_2$ process  minus a parabola and that the limit is continuous. As was mentioned previously, the fact that LPP has stationary counterparts whose spatial fluctuations are that of a simple random walk suggests that locally, the $\text{Airy}_2$ process should have a Brownian behaviour around a fixed point. The existing results in the literature regarding the Brownian behaviour of the $\text{Airy}_2$ process can be roughly divided into two groups -
  \begin{enumerate}
  	\item \label{g1}  on a small interval $[0,\epsilon]$ the $\text{Airy}_2$ process should be close to the Brownian motion in some sense
  	\medskip
  	\item \label{g2} on the interval $[0,1]$ the law of the $\text{Airy}_2$ process can be related to that of the Brownian motion. 
  \end{enumerate}
	\medskip
   In Group (\ref{g1}),   Pimentel \cite{pime-16}, in the LPP setup, showed that locally the $\text{Airy}_2$ process converges weakly to a Brownian motion in the Skorohord topology. The proof relied on a technique called 'Comparison Lemma' or 'Crossing Lemma'. The idea is that the spatial increments of the last passage time can be compared with high probability to stationary increments with a small drift. In \cite[Theorem 4.14]{matetski2016kpz}, Matetski, Quastel and Remenik showed that the  $\text{Airy}_2$ process has Brownian regularity and converges to the two-sided brownian motion in finite dimensional distributions. In \cite{pimentel2019brownian} (where some of the results lie in Group (\ref{g2})) Pimentel extends the results in \cite{pime-16} while the convergence is still in the weak sense. In Group (\ref{g2}),   Corwin and Hammond \cite{corw-hamm-14} showed that the Airy line ensemble minus parabola, conditioned on its values at the boundaries has the distribution of Brownian bridges conditioned not to meet. Building on these ideas, Hammond obtained  through the Brownian LPP \cite{hammond2016brownian}, among other things, a control on the moment of the Radon-Nykodim derivative of the law of the Airy line ensemble with respect to the Brownian bridge and a modulus of continuity of the $\text{Airy}_2$ process (see also \cite{calvert2019brownian}). In LPP on the lattice, control on the modulus of continuity of the prelimiting spatial fluctuations was obtained in \cite{basu2018time} by Basu and Ganguly.  In \cite{dauvergne2018basic} Dauvergne and Vir\'ag, using better insight on the sampled  Airy line ensemble, managed to show that the Airy line ensemble can be approximated, in total variation, by Brownian bridges, conditioned on not intersecting, without the conditioning on the lower boundary that appears in the Brownian Gibbs property.  Our result is concerned in comparing the  $\text{Airy}_2$ process with a Brownian motion on a small interval. Our result on the Brownian regularity of $\text{Airy}_2$ process lies in Group (\ref{g1}). In Theorem \ref{thm:airy} we show that $\text{Airy}_2$ process is close to a Brownian motion of rate $2$ in total variation. This improves  similar results in Group (\ref{g1}). As a consequence we show in Corollary \ref{thm:airyr} that the regularity of the $\text{Airy}_2$ process cannot be better than that of the Brownian one. Note that Corollary \ref{thm:airyr} can also be deduced by \cite[Theorem 1.1]{calvert2019brownian}. 
  
  Next we apply local stationarity to  study two aspects of the behaviour of geodesics, their behaviour close to the end points  which we refer to as \emph{stabilization}, and the coalescence of point to point geodesics starting from two points whose distance scales with $N$. Let us start with the latter. In the past few years the study of coalescence of geodesics has gained focus. Methods for the study of geodesics of growth models can be traced back to Newman and co-authors in \cite{howa-newm-01,howa-newm-97,lice-newm-96,newm-icm-95} for First Passage Percolation (FPP), another random growth model believed to be in the KPZ universality class. These methods were then used by Ferrari and Pimentel \cite{ferr-pime-05} and Coupier \cite{coup-11} to show that in LPP, for a fixed direction, from any point in the lattice there exists a.s.\ a unique infinite geodesic and that these geodesics coalesce. A first quantitive result on the coalescence of geodesics in LPP came from Pimentel \cite{pime-16}, who showed that two infinite geodesics with the same direction, coming out of two points that are $k$ away from each other will coalesce after about $k^{\nicefrac{3}{2}}$ steps. The tail of the decay was conjectured to be of exponent $-\nicefrac{2}{3}$. The proof used the fact that the  geodesic tree has the same distribution as its dual tree and existing bounds on the distribution of exit point of a geodesic of stationary LPP. The question of showing that the geodesics will not coalesce too far compared to $k$ i.e.\ a matching upper bound, was left open. This question was then taken up by Basu, Sarkar and Sly \cite{basu-sark-sly-arxiv-17} who proved the -$\nicefrac{2}{3}$ exponent for the lower bound and a matching upper bound. In that paper, the authors also proved  a polynomial upper bound for  point to point coalescence. In \cite{sepp2019coal} Sepp{\"a}l{\"a}inen and Shen, studied coalescence of infinite geodesics. Without relying on  integrable probability methods, they proved the upper bound  and a new  exponential  lower bound for fast coalescence of the geodesics. In \cite{zhang2019optimal} Zhang proved the optimal bounds of $-\nicefrac{2}{3}$ for point to point coalescence of two geodesics leaving from two points of fixed distance $k$. The proof relies on diffusive concentration of geodesics fluctuations coming from integrable probability.
   
  In this work, we also study the coalescence of two geodesics starting from two points whose distance scales with the length of the geodesics.  Results of that flavour were proved  in \cite{hammond2020exponents} and \cite{basu2019fractal} for Brownian LPP, and in \cite{FerrSpohn2003} for Poissonian LPP. More precisely, we are interested in the following question; if $\pi^1$ and $\pi^2$ are the geodesics starting from $(0,0)$ and $(0,N^\frac{2}{3})$ respectively,  terminating at $(N,N)$, what is the typical distance of the coalescence point from the three endpoints? We show in Theorem \ref{thm:ubc} and Theorem \ref{thm:ubc2} that the coalescence point will not be too close, on a macroscopic scale, to any of the end points. We emphasize that the methods used in \cite{pime-16} and \cite{sepp2019coal} cannot be used here, as they rely on a well understood  duality principle for stationary LPP geodesics \cite{sepp-arxiv-18}.
    
  Let us now turn to stabilization. Let $\pi$ be the geodesic going from $(0,0)$ to the point $(N,N)$. Since the work of Johansson in \cite{joha-ptrf-00} it is known that the fluctuations of $\pi$ around the diagonal at any macroscopic point should be of order $N^{\nicefrac{2}{3}}$. If $1 \leq l << N$, as the geodesic is expected to have a self-similarity property, one would expect the fluctuation of $\pi$ in a square of size $l^2$ around the origin to be of order $l^{\nicefrac{2}{3}}$. A proof of this was given in \cite[Theorem 3]{basu-sark-sly-arxiv-17} with diffusive concentration bounds. In \cite{hammond2016brownian} Hammond considered the regularity of the spatial fluctuation around the point $(l,l)$ for the Brownian LPP while for the Corner Growth Model with exponential weights this was proven in \cite[Theorem 3]{basu2018time} by Basu and Ganguly. 
  The behaviour of infinite geodesics is somewhat better understood. This is due to the fact that the Busemann functions 'point out' the way in which the geodesic go, through the minimum gradient principle \cite{sepp-arxiv-18,geor-rass-sepp-17-geod,geor-rass-sepp-17-buse}. This implies that a link between point to point geodesics and infinite ones should provide better insight on the former. Consider a small square of side length $M$ around the origin. From each point in the square leaves  a unique geodesic that terminate at the point $(N,N)$. Let us denote by $\mathcal{T}^{pp}$ the tree consisting of all the geodesics starting from the square and ending at the point $(N,N)$. Similarly let $\mathcal{T}^{\infty}$ be the tree that consists of all infinite geodesics of direction $45^\circ$ starting from the square. Our stabilization result, Theorem \ref{thm:stb}, shows that on a square of side $M=\delta N^\frac{2}{3}$, the trees $\mathcal{T}^{pp}$ and $\mathcal{T}^{\infty}$ agree outside a set of probability of order power of $\delta$. We use this to show in \eqref{cor:stp}, for example, that the fluctuations of the point to point geodesic  in a small box of side $l$ around the origin are, with high probability, the same as those of a stationary geodesic for which the fluctuations are known to be of the order $l^\frac{2}{3}$.  Finally, we use stabilization to study coalescence of point to point geodesics where the distance of the starting point is fixed. More precisely, for fixed $k>0$ let $\pi^1$ and $\pi^2$ be the geodesics starting from $(0,0)$ and $(0,k^\frac23)$ respectively. Let $u^N$ be the coalescence point of $\pi^1$ and $\pi^2$. Similarly let $v^*$ be the coalescence point of two infinite geodesics starting at the points $(0,0)$ and $(0,k^\frac23)$ in direction $(1,1)$. Theorem \ref{thm:stpp} shows that $u^N$ converges weakly to $v^*$. In particular, using the results in \cite{basu-sark-sly-arxiv-17}, we show that the exponent for the decay of the tail of the distance of $u^N$ from the origin is $-\nicefrac{2}{3}$  as was shown in \cite{zhang2019optimal}.
  
  The main body of our arguments only uses probabilistic methods. The only integrable-probability input we use is the emergence of the $\text{Airy}_2$ process as the limit of the increments of the last passage time.
  \vspace*{4px}\\
  {\bf Some general notation and terminology} 
  $\Z_{\ge0}=\{0,1,2,3, \dotsc\}$ and $\Z_{>0}=\{1,2,3,\dotsc\}$.  For $n\in\Z_{>0}$ we abbreviate  $[n]=\{1,2,\dotsc,n\}$.   A sequence of $n$   points  is denoted by $x_{0,n}=(x_k)_{k=0}^n=\{x_0,x_1,\dotsc,x_n\}$, and in case it is a path of length $n$  also by $x_{\bbullet}$.   $a\vee b=\max\{a,b\}$.     $C$ is a constant whose value can change from line to line.

  The standard basis vectors of $\R^2$ are  $e_1=(1,0)$ and $e_2=(0,1)$.  For a point  $x=(x_1,x_2)\in\R^2$  the $\ell^1$-norm  is   $\abs{x}=\abs{x_1} + \abs{x_2}$ .     We call the $x$-axis occasionally the $e_1$-axis, and similarly the $y$-axis and the $e_2$-axis are the same thing.  
  Inequalities on $\R^2$ are interpreted coordinatewise: for $x=(x_1,x_2)\in\R^2$ and $y=(y_1,y_2)\in\R^2$,   $x\le y$  means $x_1\le y_1$ and $x_2\le y_2$.    Notation $[x,y]$ represents both   the line segment $[x,y]=\{tx+(1-t)y: 0\le t\le 1\}$ for $x,y\in\R$ and the rectangle  $[x,y]=\{(z_1,z_2)\in\R^2:   x_i\le z_i\le  y_i \text{ for }i=1,2\}$ for $x=(x_1,x_2), y=(y_1,y_2)\in\R^2$. The context will make clear which case is used.   $0$ denotes the origin of both $\R$ and $\R^2$. $X\sim$ Exp($\lambda$) for $0<\lambda<\infty$ means that random variable $X$ has exponential distribution with rate $\lambda$, in other words $P(X>t)=e^{-\lambda t}$ for $t\ge 0$.  The mean is $E(X)=\lambda^{-1}$ and variance $\Var(X)=\lambda^{-2}$. In general,  $\overline X=X-EX$ denotes a random variable $X$ centered at its mean. If $x<y$ we write $\lzb x,y \rzb$ for the set of integers $[x,y]\cap \mathbb{Z}$. If $x,y\in \mathbb{R}^2$ such that $x\leq y$ we denote by $\lzb x,y \rzb =[x,y]\cap \mathbb{Z}^2$. If $A\subset \Z^2$ is connected, we let $\mathcal{E}(A)$ denote the set of edges induced by $A$ in $\Z^2$. 
 	
\section{Main results}
Let $\omega=\{\omega_x\}_{x\in \mathbb{Z}^2}$ be a set of random weights on the vertices of $\mathbb{Z}^2$. We assume that $\omega$ is i.i.d.\ of $\text{Exp}(1)$ distribution. For $o\in \Z^2$, we define the last-passage time on $o+\Z_{\geq0}^2$  to be 
\be\label{v:G}  
\Gpp_{o,y}=\max_{x_{\brbullet}\,\in\,\Pi_{o,y}}\sum_{k=0}^{\abs{y-o}_1}\w_{x_k}\quad\text{ for }  y\in o+\Z_{\geq 0}^2. 
\ee
$\Pi_{o,y}$ is the set of   paths $x_{\bbullet}=(x_k)_{k=0}^n$  that start at  $x_0=o$,  end at $x_n=y$ with $n=\abs{y-o}_1$,  and have increments $x_{k+1}-x_k\in\{e_1,e_2\}$. The a.s.\ unique path $\pi^{o,y}\in \Pi_{o,y}$ that attains the maximum in \eqref{v:G} is called the geodesic from $o$ to $y$. Similarly we  define the stationary LPP (see \eqref{Gr2}) $G^{\frac{1}{2}}_{o,y}$ associated with the direction $(1,1)$.  
 Let $\rim{R}^c=[N\xi-cN^\frac{2}{3}\xi,N\xi]$ be the rectangle whose lower left corner is $(N-cN^\frac{2}{3},N-cN^\frac{2}{3})$ and whose upper right corner is $(N,N)$. Let $\mathcal{E}(\rim{R}^c)$ be the set of directed edges in the subgraph of $\Z^2$ induced by the vertices in $\rim{R}^c$. We define the following random variables indexed by $\mathcal{E}(\rim{R}^c)$
 \begin{align*}
 	H^{N,c}_{(x,y)}&=G_{o,y}-G_{o,x} \quad (x,y)\in \mathcal{E}(\rim{R}^c)\\
 	H^{\frac{1}{2},N,c}_{(x,y)}&=G^{\frac{1}{2}}_{o,y}-G^{\frac{1}{2}}_{o,x},
 \end{align*}
 and 
 \begin{align*}
 	H^{N,c}&=\Big\{H^{N,c}_{(x,y)}\ :\ (x,y)\in \mathcal{E}(\rim{R}^c)\Big\}\\
 	H^{\frac{1}{2},N,c}&=\Big\{H^{\frac{1}{2},N,c}_{(x,y)}\ :\ (x,y)\in \mathcal{E}(\rim{R}^c)\Big\}.
 \end{align*}
 Let $d_\text{TV}(\cdot,\cdot)$ denote the total variation distance between two distributions. If $X\sim\mu$ and $Y\sim \nu$, we abuse notation and write $d_\text{TV}(X,Y)$ for $d_\text{TV}(\mu,\nu)$. The following is the main result of the paper. It shows that on the scale of $N^\frac{2}{3}$, around the point $(N,N)$, local increments of $G$ {\bf jointly equal} to those in $G^{\frac{1}{2}}$ with high probability. The choice of $\frac{1}{2}$ is for a neater exposition of the result, our proof is for every $0<\rho<1$, where the constants depend on $\rho$.
  \begin{theorem}\label{thm:loc}
 	There exists $c_0>0$ and $C(c_0)>0$, such that for $c\leq c_0$ and $N\geq1$
 	\begin{align}\label{loc}
 		d_\text{TV}\Big(H^{N,c},H^{\frac{1}{2},N,c}\Big)\leq Cc^\frac{3}{8}.
 	\end{align}
 \end{theorem}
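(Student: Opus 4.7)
The plan is to prove the total variation bound by constructing an explicit coupling between $H^{N,c}$ and $H^{1/2,N,c}$ under which the two arrays are \emph{jointly equal} on an event of probability at least $1 - C c^{3/8}$. The obstruction noted in the introduction --- that naively summing $O(N^{2/3})$ edgewise errors of size $O(N^{-1/3})$ fails --- is bypassed precisely because the coupling enforces joint equality rather than edgewise proximity.

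First, I would introduce two auxiliary stationary LPPs with densities $\rho_{\pm} = \tfrac{1}{2} \pm \varepsilon$, where $\varepsilon = \varepsilon(c,N)$ is a small auxiliary parameter to be optimized. Each is built on an L-shaped boundary placed on the southwest side of $\widehat{R}^c$, positioned so that the characteristic direction from the boundary corner to $\widehat{R}^c$ remains close to $(1,1)$ (the drift coming from $\rho$ being shifted by $\varepsilon$ is absorbed into the boundary placement). The standard argument shows that if the stationary $\rho_{\pm}$-geodesics to every vertex of $\widehat{R}^c$ stay inside the bulk (do not exit through the boundary), then the $\rho_{\pm}$-stationary increments over $\mathcal{E}(\widehat{R}^c)$ coincide exactly with the non-stationary increments coming from $G$. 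Moderate-deviation exit-point bounds for stationary LPP control the probability of a bad exit event by a polynomial in $(\varepsilon, c)$.

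Second, I would appeal to the Fan-Sepp\"al\"ainen queueing coupling to place the $\rho_{+}$, $\rho_{-}$, and $\rho = 1/2$ Busemann functions on a common probability space. In this coupling the disagreement set between Busemann functions at nearby densities can be quantitatively controlled in terms of $\varepsilon$ and the window geometry. Combining both ingredients via a sandwich, on the intersection of the good events the entire collection $\{H^{N,c}, H^{\rho_{-},N,c}, H^{\rho_{+},N,c}, H^{1/2,N,c}\}$ reduces to one common array indexed by $\mathcal{E}(\widehat{R}^c)$; in particular $H^{N,c} = H^{1/2,N,c}$ there. A standard coupling inequality then converts this into the claimed total variation bound.

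The key obstacle, and the source of the exponent $3/8$, is the tension in choosing $\varepsilon$. A larger $\varepsilon$ pushes the characteristic directions of $\rho_{\pm}$ further from $(1,1)$, suppressing the exit event through the auxiliary boundary, but it simultaneously degrades the Fan-Sepp\"al\"ainen coupling between Busemann functions at density $\tfrac{1}{2}$ and at $\tfrac{1}{2} \pm \varepsilon$ on the window $\widehat{R}^c$. The two competing bounds balance at $\varepsilon$ of order $c^{1/8}N^{-1/3}$ (up to constants), yielding the $c^{3/8}$ rate. The hardest technical step is the second one: quantitatively controlling the Busemann coupling across densities in terms of $\varepsilon$ and the window size $cN^{2/3}$ as a statement about the joint law of all $O(cN^{2/3})$ edge increments at once, since this is precisely where the joint-equality structure genuinely replaces a naive edgewise union bound.
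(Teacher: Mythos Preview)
Your proposal is essentially the paper's proof: the exit-point event is the paper's $\widehat A^{\xi,M}$ (Corollary~\ref{cor:ubA}), the Busemann-coupling event controlled via the Fan--Sepp\"al\"ainen queueing representation is the paper's $C^{\xi,M}$ (Proposition~\ref{prop:Cub}), and the sandwich is Corollary~\ref{cor:ge}. One slip: the optimal perturbation is $\varepsilon\sim c^{-1/8}N^{-1/3}$ (equivalently $r\sim c^{-1/8}$), not $c^{+1/8}N^{-1/3}$ --- with your sign the exit-point bound $\sim r^{-3}$ blows up as $c\to0$ rather than giving $c^{3/8}$.
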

 Let 
 \begin{align*}
 	L^N_x=2^{-\frac{4}{3}}N^{-\frac{1}{3}}\Big(G_{(0,0),(N+x(2N)^\frac{2}{3},N-x(2N)^\frac{2}{3})}-4N\Big).
 \end{align*}
 It is known \cite{BoroFerr2008} that  $L^N_x\rightarrow\mathcal{A}_2(x)-x^2$ as $N\rightarrow \infty$, where $\mathcal{A}_2(x)$ is the $\text{Airy}_2$ process and the convergence is in distribution, in the topology of continuous functions on compact sets. Set
 \begin{align*}
 	\mathcal{A}'_2(x)=\mathcal{A}_2(x)-\mathcal{A}_2(0)-x^2.
 \end{align*}
  Let $\mathcal{B}$ be an two-sided Brownian motion of variance $2$ on $\R$. Our next result shows that locally, the $\text{Airy}_2$ process looks like a Brownian motion in a strong sense. It  follows easily from Theorem \ref{thm:loc} (see a proof in the end of Section \ref{sec:stb}).
 \begin{theorem}\label{thm:airy}
 	There exists $c_0>0$ and $C(c_0)>0$, such that for $c\leq c_0$
 	\begin{align*}
 		d_\text{TV}\Big(\mathcal{A}'_2|_{[-c,c]},\mathcal{B}|_{[-c,c]}\Big)\leq Cc^\frac{3}{8}. 
 	\end{align*}
 \end{theorem}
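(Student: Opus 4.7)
The plan is to convert the discrete total-variation bound of Theorem~\ref{thm:loc} into the desired continuum bound, via the data processing inequality and lower semi-continuity of total variation under weak convergence. The key preliminary observation is that the entire antidiagonal segment
\begin{equation*}
\{v(x) := (N + x(2N)^{\nicefrac{2}{3}},\, N - x(2N)^{\nicefrac{2}{3}}) : x \in [-c, c]\}
\end{equation*}
lies inside the axis-aligned square $R' = [N - c(2N)^{\nicefrac{2}{3}},\, N + c(2N)^{\nicefrac{2}{3}}]^2$, which is a rectangle of the form $[M - \tilde c\, M^{\nicefrac{2}{3}}, M]^2$ with $M = N + c(2N)^{\nicefrac{2}{3}}$ and $\tilde c \asymp c$. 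I would first apply Theorem~\ref{thm:loc} with anchor $(N,N)$ replaced by $(M,M)$ and $c$ replaced by $\tilde c$---the characteristic direction from the origin to $(M,M)$ is still $(1,1)$, so this translation is immediate---to obtain
\begin{equation*}
d_{\mathrm{TV}}\bigl(H^{M,\tilde c},\, H^{\frac{1}{2},M,\tilde c}\bigr) \leq C\tilde c^{\nicefrac{3}{8}} = O(c^{\nicefrac{3}{8}}).
\end{equation*}

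Next I would use the cocycle property of the LPP edge increments: any up-right or down-right lattice path between $v(0)=(N,N)$ and $v(x)$ stays inside $R'$, so telescoping expresses $G_{o,v(x)} - G_{o,v(0)}$ as a signed sum of edge increments drawn from $H^{M,\tilde c}$. Hence $(L^N_x - L^N_0)_{x \in [-c,c]}$ is a deterministic measurable function of $H^{M,\tilde c}$. Defining the stationary analogue $\tilde L^{\nicefrac{1}{2},N}_x$ by applying the same formula to $G^{\frac{1}{2}}$, the data-processing inequality yields
\begin{equation*}
d_{\mathrm{TV}}\Bigl(\bigl(L^N_x - L^N_0\bigr)_{x\in[-c,c]},\, \bigl(\tilde L^{\nicefrac{1}{2},N}_x - \tilde L^{\nicefrac{1}{2},N}_0\bigr)_{x\in[-c,c]}\Bigr) \leq Cc^{\nicefrac{3}{8}}.
\end{equation*}

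Finally I would take $N \to \infty$. The left component converges weakly in $C([-c,c])$ to $\mathcal{A}'_2|_{[-c,c]}$ by~\cite{BoroFerr2008}. For the stationary component, telescoping along the right-then-down path between $v(0)$ and $v(x)$ (with $a = \lfloor x(2N)^{\nicefrac{2}{3}}\rfloor$) writes $G^{\frac{1}{2}}_{o,v(x)} - G^{\frac{1}{2}}_{o,v(0)}$ as $\sum_{i=1}^a X_i - \sum_{j=1}^a Y_j$, where the $X_i$ are i.i.d.\ $\mathrm{Exp}(\tfrac{1}{2})$ increments along the row $y=N$, the $Y_j$ are i.i.d.\ $\mathrm{Exp}(\tfrac{1}{2})$ along the column $x=N+a$, and the two families are jointly independent by Burke's property for stationary LPP. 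Donsker's invariance principle then identifies the scaling limit of $\tilde L^{\nicefrac{1}{2},N}_x - \tilde L^{\nicefrac{1}{2},N}_0$ as $\mathcal{B}|_{[-c,c]}$; a variance check $8a \cdot 2^{-\nicefrac{8}{3}} N^{-\nicefrac{2}{3}} \to 2x$ confirms rate $2$. Lower semi-continuity of total variation under weak convergence then gives
\begin{equation*}
d_{\mathrm{TV}}\bigl(\mathcal{A}'_2|_{[-c,c]},\, \mathcal{B}|_{[-c,c]}\bigr) \leq \liminf_{N\to\infty} d_{\mathrm{TV}}(\cdot,\cdot) \leq Cc^{\nicefrac{3}{8}}.
\end{equation*}

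The main obstacle I anticipate is the rigorous weak convergence of the rescaled stationary process to Brownian motion of rate $2$ in $C([-c,c])$: finite-dimensional convergence is immediate from Burke's property and the classical CLT, but tightness must also be established (e.g.\ via Kolmogorov's criterion applied to the exponential partial-sum process), and one must check compatibility with the LSC passage to the limit in the appropriate Polish topology. A secondary, minor point is the anchor-shift in the first step; this follows from translation-invariance of the i.i.d.\ weights together with the fact that the proof of Theorem~\ref{thm:loc} is written in a manner insensitive to the exact position of the target corner.
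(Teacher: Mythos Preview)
Your proposal is correct and follows essentially the same route as the paper: deduce a prelimiting total-variation bound on the increment process from Theorem~\ref{thm:loc}, identify the two weak limits as $\mathcal{A}'_2$ and $\mathcal{B}$, and pass the bound to the limit. The only differences are cosmetic---you invoke lower semi-continuity of total variation under weak convergence where the paper uses its Lemma~\ref{lem:cod2} (your version is valid and even drops the factor~$3$), and you are more explicit about shifting the anchor to $(M,M)$ so that the full antidiagonal segment sits inside the rectangle of Theorem~\ref{thm:loc}, a point the paper leaves implicit.
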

Let $I\subset \R$ be an interval and let 
\begin{align*}
	\omega_B(t)=2\sqrt{t\log(t)^{-1}}
\end{align*}
be the modulus of continuity of the Brownian motion. In \cite[Theorem 1.11]{hammond2016brownian} Hammond showed that the regularity of the Airy process is not worse than that of a Brownian motion i.e.\ 
 \begin{align*}
 	\sup_{t\in I}\limsup_{h\downarrow 0}\frac{\mathcal{A}_2(t+h)-\mathcal{A}_2(t)}{\omega_B(h)}< \infty\quad \text{with probability $1$}.
 \end{align*}
 As a corollary of Theorem \ref{thm:airy}, we show that the regularity of the $\text{Airy}_2$ process is not better than that of a Brownian motion.
 \begin{corollary}\label{thm:airyr}
 	\begin{align*}
 		\sup_{t\in I}\limsup_{h\downarrow 0}\frac{\mathcal{A}_2(t+h)-\mathcal{A}_2(t)}{\omega_B(h)}\geq 1\quad \text{with probability $1$}.
 	\end{align*} 
 \end{corollary}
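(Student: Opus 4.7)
The plan is to transfer a corresponding property of Brownian motion to the Airy$_2$ process through Theorem \ref{thm:airy}. I first observe that $\omega_B(h) = 2\sqrt{h\log(1/h)} = \sqrt{2\cdot 2h\log(1/h)}$ is exactly Levy's modulus of continuity for a rate-$2$ Brownian motion. The Orey-Taylor fast-point theorem then yields that on any non-degenerate sub-interval $J$, the set of times $t \in J$ with $\limsup_{h\downarrow 0}(\mathcal{B}(t+h)-\mathcal{B}(t))/\omega_B(h) \ge 1$ is almost surely non-empty (in fact of Hausdorff dimension $0$ but dense in $J$), so $\sup_{t\in J}\limsup_{h\downarrow 0}(\mathcal{B}(t+h)-\mathcal{B}(t))/\omega_B(h) \ge 1$ almost surely.

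Using stationarity of the Airy$_2$ process I may assume $0 \in I^\circ$. Fix $c \le c_0$ so small that $[-c/2, c/2] \subset I$ and define the measurable event on $C([-c,c])$,
\begin{align*}
	E_c = \Big\{ f : \sup_{t \in [-c/2,\, c/2]} \limsup_{h \downarrow 0,\, h \le c/2} \frac{f(t+h)-f(t)}{\omega_B(h)} \ge 1 \Big\},
\end{align*}
which depends only on $f|_{[-c,c]}$ (write the $\limsup$ as a countable limit over rationals). By the preceding paragraph $\P(\mathcal{B} \in E_c) = 1$, so Theorem \ref{thm:airy} gives $\P(\mathcal{A}_2' \in E_c) \ge 1 - Cc^{3/8}$.

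To swap $\mathcal{A}_2'$ for $\mathcal{A}_2$, I note that they differ by the smooth function $\mathcal{A}_2(0) + t^2$, so
\begin{align*}
(\mathcal{A}_2(t+h)-\mathcal{A}_2(t)) - (\mathcal{A}_2'(t+h)-\mathcal{A}_2'(t)) = 2th + h^2 = o(\omega_B(h))
\end{align*}
uniformly for $t \in [-c/2, c/2]$ as $h \downarrow 0$. Thus $E_c$ is unchanged under the swap, and $E_c \subset \{\sup_{t \in I}\limsup_{h\downarrow 0} (\mathcal{A}_2(t+h)-\mathcal{A}_2(t))/\omega_B(h) \ge 1\}$. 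Combining, the latter event has probability at least $1 - Cc^{3/8}$, and letting $c \downarrow 0$ yields probability one.

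The main obstacle is the Brownian input itself: the pointwise lower bound $\sup_t\limsup_h \ge 1$ is strictly stronger than Levy's uniform modulus $\limsup_h \sup_t = 1$, and requires the Orey-Taylor fast-point result on times achieving the full Levy rate. Everything else --- arranging $E_c$ to live on the restriction $f|_{[-c,c]}$, and removing the smooth parabolic correction --- is a routine manipulation.
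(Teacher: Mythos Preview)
Your proof is correct and follows essentially the same route as the paper: use Theorem~\ref{thm:airy} to transfer the Brownian lower bound to $\mathcal{A}'_2$ on a short interval, strip off the smooth parabolic correction, and send the interval length to zero. The paper phrases the transfer via the coupling realization of total variation (on the event $\{\mathcal{A}'_2|_{[0,\epsilon]}=\mathcal{B}|_{[0,\epsilon]}\}$) rather than via a measurable event $E_c$, which sidesteps the measurability issue you flag; but the content is the same. One point where you are in fact more careful than the paper: you correctly invoke the Orey--Taylor fast-point theorem for the Brownian input $\sup_t\limsup_h(\mathcal{B}(t+h)-\mathcal{B}(t))/\omega_B(h)\ge 1$, whereas the paper's text attributes this to ``L\'evy's modulus of continuity and self-similarity'', which on its face only gives $\limsup_h\sup_t=1$ (its cited reference, however, does point to the fast-times result).
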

 Let us now turn to our stabilization results. The set of possible asymptotic velocities or direction vectors for semi-infinite up-right paths is $\Uset=\{(t, 1-t): 0\le t\le 1\}$, 
 with relative interior $\ri\Uset=\{(t, 1-t): 0< t<1\}$. For $\xi \in \ri \Uset$, let $\Rb^{\xi,N}=[0,N\xi]$ be the rectangle whose lower left corner is $(0,0)$ and upper right corner is $N\xi$. Let $\pi$ be an up-right path whose origin is $(0,0)$. Let $I^\pi=\{i:\pi_i\in \Rb^{\xi,N}\}$ be the set of  indices of $\pi$ for which $\pi$ is in $\Rb^{\xi,N}$. We define $\pr^{\xi,N}(\pi)$ to be the restriction of the path $\pi$ on the rectangle $\Rb^{\xi,N}$, that is, $\pr^{\xi,N}(\pi)$ is a finite path defined by
\begin{align}
(\pr^{\xi,N}(\pi))_i=\pi_i \quad \forall i\in I^\pi.
\end{align}
Let $\pi^{x,\xi\infty}$ be the infinite geodesic starting from $x$ whose direction is $\xi$ \cite{sepp-arxiv-18}. For $M<N$, define the following event
\begin{align}\label{Sf}
\st^{\xi,M}=\{\pr^{\xi,M}(\pi^{x,\xi\infty})=\pr^{\xi,M}(\pi^{x,\xi N})\text{ for all } x\in \Rb^{\xi,M}\}.
\end{align}
$\st^{\xi,M}$ is the event on which any geodesic leaving from any site  $x\in \Rb^{\xi,M}$  and terminating at $\xi N$ agree with the infinite geodesic $\pi^{x,\xi\infty}$ on $\Rb^{\xi,M}$ (see Figure \ref{fig:stb}).  Our first result gives a lower bound on the probability of stabilization on small enough rectangles.
\begin{theorem}\label{thm:stb}
	Let $\xi\in\ri \Uset$ and $c>0$. For any $M > 0$ such that $M\leq cN^\frac{2}{3}$,  there exists $C(\xi,c)>0$, locally bounded in $c$, such that
	\begin{align}
	\P(\st^{\xi,M})\geq 1-CN^{-\frac{1}{4}}M^\frac{3}{8}.
	\end{align}
\end{theorem}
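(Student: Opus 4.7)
My plan is to reduce Theorem \ref{thm:stb} to Theorem \ref{thm:loc} by combining the reflection symmetry of LPP with the identification of stationary-LPP bulk increments with Busemann-function increments. For notational clarity I take $\xi=(\tfrac12,\tfrac12)$; the general $\xi\in\ri\Uset$ case follows from the density-$\rho$ version of Theorem \ref{thm:loc} mentioned immediately after \eqref{loc}.

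First I observe that $\st^{\xi,M}$ is implied by edgewise equality of the two families $\{G_{y,N\xi}-G_{x,N\xi}\}_{(x,y)\in\mathcal{E}(\Rb^{\xi,M})}$ and $\{B^\xi(x,y)\}_{(x,y)\in\mathcal{E}(\Rb^{\xi,M})}$. Indeed, at each $y\in\Rb^{\xi,M}$ the point-to-point geodesic $\pi^{y,\xi N}$ selects the step $e_i$ maximizing $G_{y+e_i,N\xi}$, while the infinite geodesic $\pi^{y,\xi\infty}$ selects the step $e_i$ maximizing $B^\xi(y,y+e_i)$, so coinciding signs of these edgewise differences on every edge of the box force the two trees to agree inside $\Rb^{\xi,M}$. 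Edges straddling the north or east boundary of $\Rb^{\xi,M}$ can be absorbed by an $O(1)$-enlargement of $M$, which does not affect the exponent.

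Next I apply the lattice reflection $\phi(z)=N\xi-z$ and its pushforward weight field $\tilde\omega_z=\omega_{\phi(z)}$, again i.i.d.\ $\text{Exp}(1)$. Path reversal yields $G^\omega_{y,N\xi}=G^{\tilde\omega}_{0,N\xi-y}$, so the joint law of $\{G_{y,N\xi}-G_{x,N\xi}\}_{(x,y)\in\mathcal{E}(\Rb^{\xi,M})}$ equals the joint law of $H^{N,c}$ from Theorem \ref{thm:loc} with $c=M/N^{2/3}$. Theorem \ref{thm:loc} then places this law within total variation $Cc^{3/8}=CN^{-1/4}M^{3/8}$ of the stationary-LPP family $H^{1/2,N,c}$. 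The queueing construction of Fan--Sepp\"al\"ainen \cite{fan-sepp-arxiv}, which underpins the proof of Theorem \ref{thm:loc}, realizes the Busemann increments $B^\xi$ as bulk increments of a density-$\tfrac12$ stationary LPP built over $\omega$. Chaining these two couplings produces a coupling of the finite-$N$ and Busemann increment families that coincide edgewise on $\mathcal{E}(\Rb^{\xi,M})$ with probability at least $1-CN^{-1/4}M^{3/8}$; by the first step this event is contained in $\st^{\xi,M}$, completing the proof.

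The main obstacle is this last alignment. Theorem \ref{thm:loc} as stated is a distance between the joint laws of two abstract increment families, whereas $\st^{\xi,M}$ is an event on the single probability space of $\omega$ that carries both geodesic trees as $\omega$-measurable functionals. The bridge is that in the construction of \cite{fan-sepp-arxiv} the boundary weights of the stationary LPP are obtained from a queueing fixed point driven by $\omega$ itself, so $B^\xi$ is a functional of $\omega$ and the coupling produced in the proof of Theorem \ref{thm:loc} lives on the same probability space as $\pi^{x,\xi N}$. Verifying that the specific coupling used in Theorem \ref{thm:loc} is compatible with this queueing representation, so that the TV bound transfers directly to a probability bound on the required joint event, is the technical core of the argument.
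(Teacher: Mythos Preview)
Your reduction has a genuine gap, and you have put your finger on it yourself in the last paragraph without closing it. Theorem~\ref{thm:loc} is a bound on the total-variation distance between two \emph{laws}. The event $\st^{\xi,M}$, via your first step (which is the paper's event $\mathcal{H}^{\xi,M}$), asks that two specific $\omega$-measurable functionals---the increments $G_{y,N\xi}-G_{x,N\xi}$ and the Busemann increments $B^{\rho(\xi)}_{x,y}$---coincide on the \emph{same} probability space carrying $\omega$. Your reflection argument and Theorem~\ref{thm:loc} together only show that these two functionals have laws within $CN^{-1/4}M^{3/8}$ in total variation. That says there exists \emph{some} coupling on which they agree with high probability; it does not say that the natural coupling, where both live over the common bulk field $\omega$, achieves this. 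Closeness in law does not imply closeness pointwise.

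The paper does not deduce Theorem~\ref{thm:stb} from Theorem~\ref{thm:loc}; it proves both simultaneously from the same pair of estimates. It works directly on the $\omega$-probability space: with $\rhoup{\rho}=\rho+rN^{-1/3}$ and $\rhodown{\rho}=\rho-rN^{-1/3}$, the event $\rim{A}^{\xi,M}$ (perturbed stationary geodesics exit through the correct boundary) has complement probability $O(r^{-3})$, and the event $C^{\xi,M}$ (the coupled Busemann functions $B^{\rhoup{\rho}}$ and $B^{\rhodown{\rho}}$ agree on the boundary of the small box) has complement probability $O(N^{-1/4}M^{3/8})$ via a queueing heavy-traffic estimate. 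On $\rim{A}^{\xi,M}\cap C^{\xi,M}$ the crossing lemma sandwiches both the $G$-increments and $B^{\rho}$ between $B^{\rhoup{\rho}}$ and $B^{\rhodown{\rho}}$, forcing the pointwise equality $\mathcal{H}^{\xi,M}$ and hence $\st^{\xi,M}$. Optimizing $r$ gives the stated bound; the total-variation statement of Theorem~\ref{thm:loc} is then a \emph{consequence} of the pointwise bound $\P(\mathcal{H}^{\xi,M})\ge 1-CN^{-1/4}M^{3/8}$, not an input to it. What you call ``the technical core of the argument'' is exactly this sandwich, and it cannot be bypassed by invoking the total-variation conclusion it produces.
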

Define the the following set
\begin{align}
\cF=\{f\in \R_+\rightarrow\R_+:\text{$f$ is increasing and $f(t)\leq t$}\}.
\end{align}
For $f\in \Sf$, we say the sequence $\rec^{\xi,f}=\{\Rb^{\xi,f(N)}\}_{N\in\Z_{>0}}$ stabilizes if 
\begin{align}
\lim_{N\rightarrow \infty}\P(\st^{\xi,f(N)})=1.
\end{align} 
In words, the sequence $\rec^{\xi,f}$ stablizes if with high probability the tree of all the geodesics starting at points in $\Rb^{\xi,f(N)}$ and terminating at $\xi N$ agree on $\Rb^{\xi,f(N)}$ with the tree of infinite geodesics in direction $\xi $ starting from $\Rb^{\xi,f(N)}$. As $f$ is a function of $N$ we shall often write $f$ instead of $f(N)$ so that $\Rb^{\xi,f}=\Rb^{\xi,f(N)}$. As a corollary of Theorem \ref{thm:stb} we have the following.
\begin{corollary}\label{cor:stb}
		For any $\xi\in\ri \Uset$ and $f\in \Sf$ such that $f(t)=o(t^\frac{2}{3})$, $\rec^{\xi,f}$ stabilizes and there exists $C(\xi)>0$ such that
	\begin{align}
	\P(\st^{\xi,f})\geq 1-CN^{-\frac{1}{4}}f(N)^\frac{3}{8}
	\end{align}
\end{corollary}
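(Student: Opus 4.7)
The plan is to derive this essentially as a direct consequence of Theorem \ref{thm:stb}, with a little bookkeeping to handle the regime where $N$ is small. First I would fix $\xi \in \ri\Uset$ and choose a sufficiently small constant $c_\xi > 0$ so that the prefactor $C(\xi, c_\xi)$ appearing in Theorem \ref{thm:stb} is finite (this is possible by the ``locally bounded in $c$'' hypothesis). Call this constant $C_1 = C(\xi, c_\xi)$.

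Next, given $f \in \cF$ with $f(t) = o(t^{2/3})$, the growth assumption guarantees there exists $N_0 = N_0(\xi, f)$ such that $f(N) \leq c_\xi N^{2/3}$ for every $N \geq N_0$. For such $N$, I would apply Theorem \ref{thm:stb} with $M = f(N)$ to obtain
\begin{equation*}
\P(\st^{\xi, f(N)}) \;\geq\; 1 - C_1 \, N^{-1/4} f(N)^{3/8}.
\end{equation*}
For the finitely many $N < N_0$, the probability bound is trivially nonnegative, so by enlarging the constant to some $C = C(\xi) \geq C_1$ (chosen so that $C N^{-1/4} f(N)^{3/8} \geq 1$ for all $N < N_0$), the same inequality remains valid for all $N \geq 1$. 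This establishes the quantitative bound claimed in the corollary.

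Finally, to conclude that $\rec^{\xi,f}$ stabilizes, I would verify that the error term $N^{-1/4} f(N)^{3/8} \to 0$. Since $f(N) = o(N^{2/3})$, we have $f(N)^{3/8} = o(N^{(2/3)(3/8)}) = o(N^{1/4})$, so $N^{-1/4} f(N)^{3/8} \to 0$ as $N \to \infty$, and hence $\P(\st^{\xi,f}) \to 1$.

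There is no real obstacle here; the only subtlety is that Theorem \ref{thm:stb} requires $M \leq c N^{2/3}$ for some small constant $c$, so one must first check that an admissible $c$ exists (provided by the local boundedness of the prefactor) and then use the sub-$N^{2/3}$ growth of $f$ to eventually sit inside that regime. Absorbing the remaining small-$N$ range into a larger constant completes the argument.
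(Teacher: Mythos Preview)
Your proposal is correct and matches the paper's intended argument; the paper states the corollary immediately after Theorem \ref{thm:stb} without a separate proof, treating it as a direct consequence. Two minor remarks: (i) Theorem \ref{thm:stb} allows any $c>0$ (not just small $c$), with $C(\xi,c)$ locally bounded in $c$, so there is no need to select a ``sufficiently small'' $c_\xi$; (ii) your enlarged constant for the small-$N$ range technically depends on $f$ through $N_0(\xi,f)$, so strictly speaking $C = C(\xi,f)$ rather than $C(\xi)$---this is a cosmetic imprecision shared by the paper's own statement and is irrelevant to the stabilization conclusion.
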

\begin{figure}[t]
	\includegraphics[scale=1]{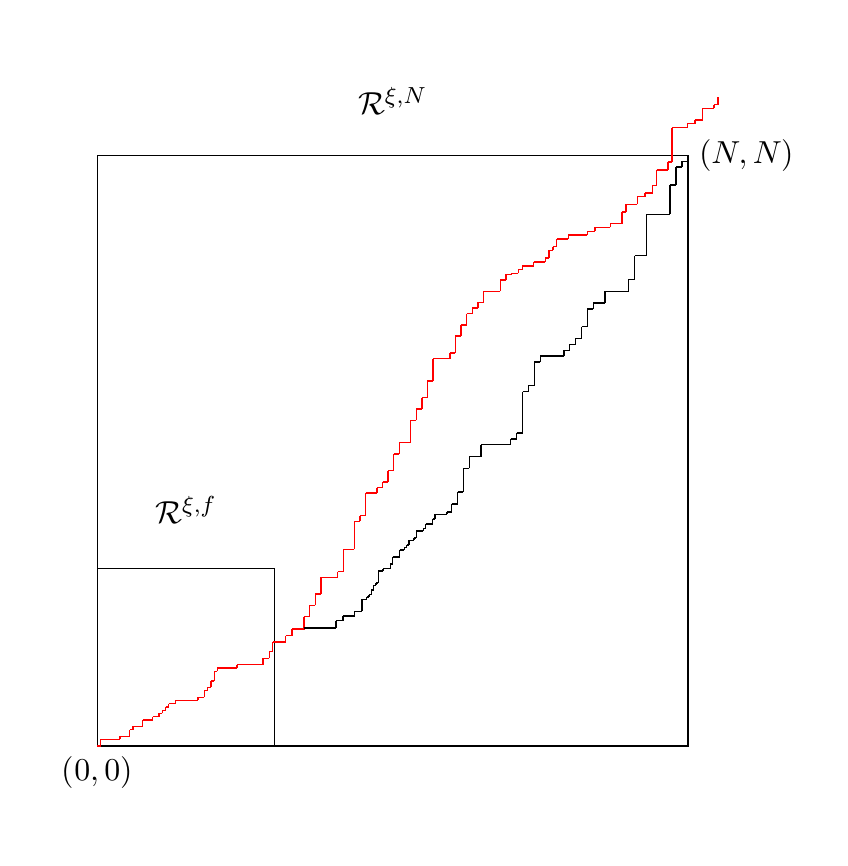}
	\caption{\small The infinite geodesic $\pi^{0,\xi \infty}$ and the geodesic $\pi^{0,\xi N}$ agree in the box $\Rb^{\xi,f}$. On the event $\st^{\xi,f}$ for any $x\in \cR^{\xi,f}$ the geodesics $\pi^{x,\xi \infty}$ and $\pi^{x,\xi N}$ have the same restriction on the small square $\cR^{\xi,f}$.}\label{fig:stb}
\end{figure}
\medskip
As was mentioned earlier, stabilization can be used to study the behaviour of point to point geodesics close to their end points. For Fixed $M$, on $\Rb^{\xi,M}$ consider the stationary LPP $\rim{G}^{\rho}_{\xi (M+1),0}$ (see \eqref{Gr12}), starting from the point $\xi (N+1)$ and terminating at the origin. Let us denote its geodesic by $\rim{\pi}^\rho$. Let $\pi^{0,\xi N}$ be the geodesic of LPP starting from the origin $0$ and terminating at $\xi N$. The following corollary relates the behaviour of $\pi^{0,\xi N}$ to that of $\rim{\pi}^\rho$. 
\begin{corollary}\label{cor:stp}
	For fixed $M\in\Z_{> 0}$
	\begin{align}\label{stp}
		\lim_{N\rightarrow \infty}d_{TV}\big(\rim{\pi}^\rho,\pr^{\xi,M}(\pi^{0,\xi N})\big)=0.
	\end{align}
	In particular, there exists $C(\xi)>0$ such that for $l\in\Z_{> 0}$
	\begin{align}\label{stp1}
		\lim_{N\rightarrow \infty}\P(|\pi^{0,\xi N}_2l-(l,l)|>rl^\frac{2}{3})\leq Cr^{-3}.
	\end{align}
\end{corollary}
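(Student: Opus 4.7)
The plan is to combine the stabilization estimate of Corollary~\ref{cor:stb} with the Busemann-function construction of stationary LPP on $\Rb^{\xi,M}$ to establish \eqref{stp}, and then to extract \eqref{stp1} from the classical polynomial tail bound on the transversal fluctuations of the stationary geodesic.

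For \eqref{stp}, fix $M \in \Z_{>0}$ and take $f(N) = M$ for all $N$. Since $M$ is constant in $N$, $f(N) = o(N^{2/3})$, so Corollary~\ref{cor:stb} yields $\P(\st^{\xi,M}) \to 1$ as $N \to \infty$. By the definition \eqref{Sf} of $\st^{\xi,M}$, on this event $\pr^{\xi,M}(\pi^{0,\xi N}) = \pr^{\xi,M}(\pi^{0,\xi\infty})$. Next, I would invoke the standard Busemann coupling (as in \cite{sepp-arxiv-18,geor-rass-sepp-17-buse}): the increments of the Busemann function $B^\xi$ along the two outer axes of $\Rb^{\xi,M}$ meeting at $\xi(M+1)$ produce exactly the $\rho$-stationary boundary weights, and with the bulk weights $\omega$ left unchanged one recovers the stationary LPP $\rim{G}^\rho$. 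In this coupling the stationary geodesic $\rim{\pi}^\rho$ coincides with $\pr^{\xi,M}(\pi^{0,\xi\infty})$ as a subset of $\Z^2$. Therefore on $\st^{\xi,M}$ the two paths $\rim{\pi}^\rho$ and $\pr^{\xi,M}(\pi^{0,\xi N})$ are literally equal, whence
\[
d_{TV}\bigl(\rim{\pi}^\rho,\pr^{\xi,M}(\pi^{0,\xi N})\bigr) \le 1 - \P(\st^{\xi,M}) \to 0.
\]

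For \eqref{stp1}, fix $l \in \Z_{>0}$ and apply \eqref{stp} with, say, $M = 3l$; this is legitimate because $M$ may depend on $l$ yet remain fixed as $N \to \infty$. The point $\pi^{0,\xi N}_{2l}$ is a measurable function of $\pr^{\xi,M}(\pi^{0,\xi N})$, so the total-variation convergence \eqref{stp} transfers tail probabilities, and passing to the limit in $N$ reduces the estimate to the analogous bound for the stationary geodesic,
\[
\P\bigl(|\rim{\pi}^\rho_{2l} - (l,l)| > r l^{2/3}\bigr) \le C r^{-3}.
\]
This is a classical consequence of the $r^{-3}$ tail for the boundary exit point of the stationary LPP in its characteristic direction; see e.g.~\cite{basu-sark-sly-arxiv-17} or \cite{sepp-arxiv-18}.

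The only slightly delicate step is the Busemann identification: one must verify that $\rim{\pi}^\rho$ and $\pr^{\xi,M}(\pi^{0,\xi\infty})$ are the \emph{same} random path (not merely equal in distribution), so that stabilization passes through directly to total variation. This involves matching the orientation of $\rim{\pi}^\rho$ (indexed from $\xi(M+1)$ back to $0$) with the forward indexing of the point-to-point geodesic, but it is otherwise standard in the exponential LPP literature and no new probabilistic input is required beyond what is already assumed in the paper.
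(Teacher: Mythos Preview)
Your argument is correct and follows essentially the same route as the paper's: stabilization via Corollary~\ref{cor:stb} reduces $\pr^{\xi,M}(\pi^{0,\xi N})$ to $\pr^{\xi,M}(\pi^{0,\xi\infty})$, which in turn has the law of the stationary geodesic $\rim{\pi}^\rho$, and then \eqref{stp1} follows from the known $r^{-3}$ tail on stationary transversal fluctuations (the paper cites \cite[Theorem~5.3]{sepp-cgm-18}).

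Two small remarks. First, your worry that $\rim{\pi}^\rho$ and $\pr^{\xi,M}(\pi^{0,\xi\infty})$ must be the \emph{same} random path is unnecessary: equality in distribution is enough, since stabilization already provides a coupling of $\pr^{\xi,M}(\pi^{0,\xi N})$ with $\pr^{\xi,M}(\pi^{0,\xi\infty})$, and total variation depends only on the laws. Second, the choice $M=3l$ does not always guarantee that $\pi^{0,\xi N}_{2l}$ lies in $\Rb^{\xi,M}$ (e.g.\ for $\xi=(\tfrac12,\tfrac12)$ the box has sides $3l/2<2l$); taking $M\ge 2l/\min(\xi_1,\xi_2)$ fixes this.
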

\begin{proof}
	\eqref{stp} follows from Corollary \ref{cor:stb} and the fact that, the distribution of an infinite geodesic going backwards is that of a stationary one (see the proof of Theorem \ref{thm:stb}). \eqref{stp1} follows from \eqref{stp} and well known bounds on the fluctuations of stationary geodesics \cite[Theorem 5.3]{sepp-cgm-18}. 
\end{proof}
Stabilization can help relating results on infinite geodesics to results on point-to-point geodesics and vice versa. Consider the points $v_1=(0,0)$ and $v_2=k^{\nicefrac{2}{3}}e_2$ for some $k\geq1$. Let $\pi^{v_1,\xi \infty}$ and $\pi^{v_2,\xi \infty}$ be the infinite geodesics in direction $\xi$ starting from $v_1$ and $v_2$ respectively. Let $v^*=(v^*_1,v^*_2)$  be the point in $\pi^{v_1,\xi \infty}\cap \pi^{v_2,\xi \infty}$ that is closest to the origin. Similarly let $u^N$ be the closest point in $\pi^{v_1,\xi N}\cap \pi^{v_2,\xi N}$ to the origin. In \cite{basu-sark-sly-arxiv-17} Basu, Sarkar and Sly showed that there exist universal constants $C_1,C_2,R_0$ such that for every $k>0$ and $R>R_0$ 
\begin{align}\label{bss3}
	C_1R^{-\frac{2}{3}}\leq \P(|v^*|>Rk) \leq C_2R^{-\frac{2}{3}}.
\end{align}
Moreover, they showed that there exist $C,R_0,c>0$ such that for every $k>0$ and $R>R_0$
\begin{align}\label{bss}
	\limsup_{n\rightarrow \infty} \P(u_1^N>Rk)\leq CR^{-c}.
\end{align}
The exponent $c$ in \eqref{bss} was not identified but was conjectured to be $\nicefrac{2}{3}$. This was recently settled by Zhang in \cite{zhang2019optimal} using input from integrable probability. We now show how this can be approached via our stabilization result.
\begin{theorem}\label{thm:stpp}
	The sequence $|u^{N}|$ converges weakly to $|v^*|$. Moreover, there exist universal constants $C_1,C_2,R_0>0$ such that for $R>R_0$, for any $k\geq 1$ and $N>(Rk)^5$
	\begin{align*}
	C_1R^{-\frac{2}{3}}\leq \P(|u^N|>Rk) \leq C_2R^{-\frac{2}{3}}.
	\end{align*}
\end{theorem}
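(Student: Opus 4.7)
The plan is to compare the point-to-point coalescence $u^N$ with the infinite coalescence $v^*$ through Corollary \ref{cor:stb}, and then inherit the tail bounds for $|u^N|$ from the Basu--Sarkar--Sly estimates in \eqref{bss3}. The key observation is that, on the stabilization event for an appropriately chosen box, the first meeting of the two finite geodesics coincides with the first meeting of the two infinite geodesics, i.e.\ $u^N=v^*$.

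Fix $L\ge 1$ and set $M = 2LRk$; for the diagonal direction $\xi=(\tfrac12,\tfrac12)$ the rectangle $\Rb^{\xi,M}$ has upper-right corner $(LRk,LRk)$ and therefore contains $\{x\in\Z_{\ge 0}^2 : |x|\le LRk\}$, and in particular the endpoints $v_1$ and $v_2$ whenever $R,k\ge 1$. On the event $\{|v^*|\le LRk\}\cap \st^{\xi,M}$ the coalescence point $v^*$ lies in $\Rb^{\xi,M}$, and by the definition \eqref{Sf} of $\st^{\xi,M}$ the finite geodesic $\pi^{v_i,\xi N}$ agrees inside $\Rb^{\xi,M}$ with the infinite geodesic $\pi^{v_i,\xi\infty}$ for $i=1,2$. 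Since $\pi^{v_1,\xi\infty}$ and $\pi^{v_2,\xi\infty}$ meet for the first time precisely at $v^*\in\Rb^{\xi,M}$ and an up-right path that exits the box never returns, the finite geodesics also have $v^*$ as their first meeting point, so $u^N=v^*$. Combining \eqref{bss3} with Corollary \ref{cor:stb} yields
\begin{align*}
    \P(u^N\ne v^*)\le \P(|v^*|>LRk)+\P\bigl((\st^{\xi,M})^c\bigr) \le C_2 L^{-2/3}R^{-2/3}+CN^{-1/4}(2LRk)^{3/8}.
\end{align*}

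Since the symmetric difference of $\{|u^N|>Rk\}$ and $\{|v^*|>Rk\}$ is contained in $\{u^N\ne v^*\}$, one has $|\P(|u^N|>Rk)-\P(|v^*|>Rk)|\le \P(u^N\ne v^*)$. For the upper bound in Theorem \ref{thm:stpp} I take $L=1$: the hypothesis $N>(Rk)^5$ gives $N^{-1/4}(Rk)^{3/8}\le (Rk)^{-7/8}\le R^{-2/3}$ once $R$ exceeds a universal threshold, so the BSS upper bound delivers $\P(|u^N|>Rk)\le C'R^{-2/3}$. For the lower bound I first choose $L$ large enough that $C_2L^{-2/3}<C_1/4$, and then choose $R_0$ large enough so that $CN^{-1/4}(LRk)^{3/8}\le (C_1/4)R^{-2/3}$ whenever $R>R_0$ and $N>(Rk)^5$; a short calculation shows this is compatible with the constraint $M\le c N^{2/3}$ required by Theorem \ref{thm:stb}. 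These choices force $\P(u^N\ne v^*)\le (C_1/2)R^{-2/3}$ which, combined with the BSS lower bound, gives $\P(|u^N|>Rk)\ge (C_1/2)R^{-2/3}$.

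The weak convergence $|u^N|\Rightarrow |v^*|$ as $N\to\infty$ for fixed $k$ follows from the same coincidence argument with a diverging box: taking $f(N)=N^{1/2}$ (so $f(N)\to\infty$ and $f(N)=o(N^{2/3})$), Corollary \ref{cor:stb} gives $\P\bigl((\st^{\xi,f(N)})^c\bigr)\to 0$, while $\P(v^*\notin \Rb^{\xi,f(N)})\to 0$ by almost-sure finiteness of $v^*$, so $\P(u^N\ne v^*)\to 0$. The main obstacle is the bookkeeping among $R$, $k$, and $N$: one has to check that the $3/8$ error exponent supplied by Corollary \ref{cor:stb}, combined with the requirement $M\gtrsim Rk$ needed to place $v^*$ inside $\Rb^{\xi,M}$, still allows $N^{-1/4}M^{3/8}$ to be absorbed into $R^{-2/3}$ under the scaling $N>(Rk)^5$. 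This is precisely what dictates the quantitative window stated in the theorem; any strictly larger power of $Rk$ in the hypothesis on $N$ would clearly work, but matching the $\nicefrac{2}{3}$ exponent on the right-hand side is what forces the fifth-power restriction.
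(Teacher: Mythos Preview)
Your argument is correct and follows the same route as the paper: stabilization (Theorem~\ref{thm:stb}/Corollary~\ref{cor:stb}) forces $u^N=v^*$ on a high-probability event, after which the BSS bounds~\eqref{bss3} transfer to $|u^N|$. The paper streamlines one step: instead of bounding $\P(u^N\ne v^*)$---which brings in the extra term $\P(|v^*|>LRk)$ and hence your auxiliary parameter $L$---it observes directly that the symmetric difference $\{|v^*|>Rk\}\Delta\{|u^N|>Rk\}$ is contained in $(\st^{\xi,M})^c$ for $M$ of order $Rk$. This gives
\[
\bigl|\P(|u^N|>Rk)-\P(|v^*|>Rk)\bigr|\le \P\bigl((\st^{\xi,M})^c\bigr)\le CN^{-1/4}(Rk)^{3/8}\le CR^{-7/8}
\]
under $N>(Rk)^5$, and since $7/8>2/3$ this single error term is absorbed into both the upper and lower BSS bounds without any extra parameter.
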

\begin{proof}
	 If exactly one of the events $\{|v^*|>Rk\},\{|u^N|>Rk\}$ occurs then paths must not have coalesced in $\Rb^{\xi,\frac12 Rk}$, in other words, $\st^{\xi,\frac12 Rk}$ does not occur. Therefore, via the symmetric difference and using Theorem \ref{thm:stb},
	 \begin{align*}
	 	|\P(|v^*|>Rk)-\P(|u^N|>Rk)|\leq \P\big(\{|v^*|>Rk\}\Delta\{|u^N|>Rk\}\big)\leq \P\big((\st^{\xi,\frac12 Rk})^\text{c}\big)\leq N^{-\frac{1}{4}}(Rk)^{\frac{3}{8}},
	 \end{align*}
	 which shows that  $|v^{N}|$ converges weakly to $|v^*|$. 
	 Taking $N=(Rk)^{5}$ and using Theorem \ref{thm:stb}  
	\begin{align}
		\P(|v^*|>Rk)- R^{-\frac{7}{8}}\leq \P(|u^N|>Rk)\leq  \P(|v^*|>Rk)+ R^{-\frac{7}{8}}\label{bss2}. 
	\end{align}
	As $\nicefrac{7}{8}> \nicefrac{2}{3}$ \eqref{bss2} and \eqref{bss3} imply the result.
\end{proof}
\medskip

Let us now turn to our coalescence results. In $\Rb^{\xi, N}$, consider the points $\ct=\xi N$, $q^1=(0,0)$ and $q^2=aN^\frac{2}{3}e_2$ where $a>0$ and where we assume that $N$ is large enough so that $q^2\in \Rb^{\xi, N}$. Let 
\begin{align}
\cC^{a,\xi}=\pi^{q^1,o}\cap \pi^{q^2,o},
\end{align} 
be the points shared by the geodesics starting from $q_i$ and terminating at $\ct$ for $i\in {1,2}$. We define the coalescence point $\pc$ to be the unique point such that
\begin{align}
\pc\in \cC^{a,\xi} \quad \text{and} \quad \pc\leq x \quad \forall x\in \cC^{a,\xi},
\end{align}
as in Figure \ref{fig:coal}. Our next result shows that the point $\pc$ is not likely to be too close to the point $\ct$ on a macroscopic scale.
\begin{theorem}\label{thm:ubc}
	For every $a>0$ and $\xi\in\ri\Uset$, there exists a constant $C(\xi,a)>0$, locally bounded in $a$, such that for every $0<\alpha<1$ and $N>N(\alpha)$
	\begin{align}\label{cub}		
	\P(|\ct-\pc|\leq \alpha N)\leq C\alpha^\frac{2}{9}.
	\end{align}
\end{theorem}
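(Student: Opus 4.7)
The plan is to read the event $\{|\ct - \pc| \le \alpha N\}$ backwards from $\ct$ and then control it by combining the local stationarity Theorem \ref{thm:loc} with a stationary-LPP coalescence estimate. Reversing the two geodesics $\pi^{q^1,\ct},\pi^{q^2,\ct}$ so that both start at $\ct$, the quantity $|\ct-\pc|$ is precisely the $\ell^1$-length of their common initial segment, and the event of interest asserts that the two reverse geodesics split apart within $\alpha N$ steps of $\ct$.

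I would first apply Theorem \ref{thm:loc} in the rectangle $\rim{R}^c$ of side $cN^{\nicefrac{2}{3}}$ anchored at $\ct$, for a scale $c = c(\alpha) \le c_0$ to be optimized. This couples the increment field $H^{N,c}$ to the stationary field $H^{\nicefrac{1}{2},N,c}$ off an event of probability at most $Cc^{\nicefrac{3}{8}}$, and on the coupled event the behaviour of the two reverse geodesics inside $\rim{R}^c$ matches that in the stationary model $G^{\nicefrac{1}{2}}$, where it is driven by a Busemann cocycle with explicit iid exponential increments along the south-west boundary. Using this representation one should bound the probability that the two reverse stationary geodesics stay together throughout $\rim{R}^c$ and then continue together for an additional $\alpha N - O(cN^{\nicefrac{2}{3}})$ steps in the bulk by some $\Phi(c,\alpha)$: the idea is that once the boundary Busemann increments fix a common exit point, the ``continued-together'' event in the bulk becomes a coalescence question between two stationary geodesics whose separation scale is controlled by the boundary data of $\rim{R}^c$. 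Combining yields
\begin{equation*}
\P(|\ct-\pc|\le \alpha N)\ \le\ Cc^{\nicefrac{3}{8}}\ +\ \Phi(c,\alpha),
\end{equation*}
and I expect the exponent $\nicefrac{2}{9}$ to emerge from balancing these two terms in $c = c(\alpha)$.

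The main obstacle is producing the stationary-LPP bound $\Phi(c,\alpha)$. Inside $\rim{R}^c$ the Busemann description makes coalescence directly computable, but the event ``the two reverse geodesics remain together for $\alpha N$ more steps after exiting $\rim{R}^c$'' occurs in the bulk, where local stationarity no longer applies; one needs an external input to control it. The most plausible route is the queueing-type Busemann-function coupling of Fan--Sepp\"al\"ainen referenced in the introduction, which couples Busemann cocycles at two nearby parameters and turns the ``stay-together'' question into an exit-point comparison that admits polynomial tails. A secondary difficulty is to propagate locally-bounded dependence on $a$ and $\xi$ through the chain of estimates, so that the final constant $C(\xi,a)$ has the stated form. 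Making the bulk-to-boundary reduction rigorous, with the right exponents to produce $\nicefrac{2}{9}$ after optimization, is the crux of the argument.
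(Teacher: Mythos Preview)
Your approach has a genuine gap. Theorem \ref{thm:loc} couples increments only inside a box $\rim{R}^c$ of side $cN^{\nicefrac{2}{3}}$, whereas the event $\{|\ct-\pc|\le \alpha N\}$ concerns geodesic behaviour over the macroscopic distance $\alpha N$, which for fixed $\alpha$ is of strictly larger order than $N^{\nicefrac{2}{3}}$. Agreement of the reverse geodesics with their stationary counterparts inside $\rim{R}^c$ tells you nothing about whether the point-to-point geodesics remain together in the range from $cN^{\nicefrac{2}{3}}$ to $\alpha N$ away from $\ct$. Your term $\Phi(c,\alpha)$, which you leave unspecified, would have to absorb essentially the entire content of the theorem; local stationarity in the small box does no real work here. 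There is also a sign problem: the event you say $\Phi$ should bound---that the reverse geodesics \emph{stay together} for $\alpha N$ steps---is the complement of $\{|\ct-\pc|\le \alpha N\}$, so the displayed inequality $\P\le Cc^{\nicefrac{3}{8}}+\Phi$ cannot be correct as written.

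The paper does not invoke the packaged Theorem \ref{thm:loc} at all. It works directly at the macroscopic scale: let $\Ra$ be the rectangle of side $\sim\alpha N$ at the far corner. Two stationary geodesics with perturbed densities $\rho\pm rN^{-\nicefrac{1}{3}}$ are shown to sandwich both $\pi^{q^1,\ct}$ and $\pi^{q^2,\ct}$ (event $\rim{A}^r$, cost $Cr^{-3}$). One then shows that these two stationary geodesics enter $\Ra$ through a window $\partial^{\alpha,t}_c$ of width $tN^{\nicefrac{2}{3}}$ on the boundary of $\Ra$ (event $\cB^{r,\alpha,t}$, cost $C\alpha^2 t^{-3}$), and that the Busemann increments at the two densities agree along that window (event $\cD^{r,\alpha,t}$, cost $Ct^{\nicefrac{1}{2}}r$ via the queueing coupling). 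On the intersection, the two sandwiching stationary geodesics use identical boundary and bulk data in $\Ra$ and hence coincide throughout $\Ra$, forcing $\pi^{q^1,\ct}$ and $\pi^{q^2,\ct}$ to coincide there as well; thus $\pc\notin\Ra$. Balancing three error terms with two free parameters $r=\alpha^{-\nicefrac{2}{27}}$, $t=\alpha^{\nicefrac{16}{27}}$ makes each term $\sim\alpha^{\nicefrac{2}{9}}$. The structural point you are missing is the sandwiching by stationary geodesics at two perturbed densities and the application of the Busemann coupling on the boundary of the \emph{macroscopic} box $\Ra$, not in a mesoscopic box at the endpoint $\ct$.
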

The following result shows that the geodesics $\pi_{q^1,o}$ and $\pi_{q^2,o}$ do not coalesce too close to their origins on a macroscopic scale. Although the proof does not require local stationarity, we state it for completeness.
\begin{theorem}\label{thm:ubc2}
	For every $a>0$ and $\xi\in\ri\Uset$, there exists a constants $C(\xi,a)>0$ such that for every $0<\alpha<1$ and $N>N(\alpha)$ 
	\begin{align*}
	\P(|q^2-\pc|\leq \alpha N)\leq C\alpha^2.
	\end{align*}
\end{theorem}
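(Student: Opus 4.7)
The plan is to reduce the event $\{|q^2-\pc|_1\leq\alpha N\}$ to a transversal fluctuation statement for the single geodesic $\pi^{q^1,o}$, and then invoke a polynomial tail bound with exponent three (which is the reason the bound $\alpha^2$ appears). Crucially this avoids any coupling with a stationary LPP, as advertised in the surrounding text.

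First I would use that $\pc$ lies on the up-right path $\pi^{q^2,o}$, so $\pc\geq q^2$ coordinatewise; combined with $|q^2-\pc|_1\le\alpha N$, this forces $\pc$ into the triangle
\[
T_\alpha \;:=\; \{q^2 + (s,t) : s,t\ge 0,\ s+t\le\alpha N\}.
\]
Since $\pc$ also lies on $\pi^{q^1,o}$, the geodesic $\pi^{q^1,o}$ must visit $T_\alpha$. Taking $\xi = (1/2,1/2)$ to fix notation (the general case reduces to this by an affine change of variables, producing a direction-dependent constant), a direct computation shows that every point of $T_\alpha$ has $\ell^1$-coordinate in $[aN^{2/3},aN^{2/3}+\alpha N]$ and anti-diagonal coordinate $y-x\ge aN^{2/3}-\alpha N$. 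Hence the visit forces the transversal displacement $D(\tau)$ of $\pi^{q^1,o}$ to satisfy $D(\tau)\ge aN^{2/3}-\alpha N$ at some $\tau\le aN^{2/3}+\alpha N$.

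Next I would invoke a maximal transversal fluctuation bound of the form
\[
\P\Big(\sup_{\tau\le T}\,D(\tau)\ge r\,T^{2/3}\Big)\;\le\;C\,r^{-3}\qquad(r\ge 1),
\]
available by combining the moderate deviation estimates of \cite{basu-sark-sly-arxiv-17} with a dyadic chaining over $\tau$. Applying this with $T=aN^{2/3}+\alpha N$ and deviation level $D=aN^{2/3}-\alpha N$, one distinguishes two regimes. When $\alpha\ge aN^{-1/3}$ the ratio $r\asymp a\alpha^{-2/3}$ yields $C(\xi,a)\alpha^2$ immediately. When $\alpha<aN^{-1/3}$ the ratio is of order $a^{1/3}N^{2/9}$ and the bound is $O\bigl(1/(aN^{2/3})\bigr)$, which is dominated by $C(\xi,a)\alpha^2$ as soon as $N\ge(a\alpha^2)^{-3/2}$; this is precisely the threshold absorbed into $N(\alpha)$ in the statement.

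The main obstacle is that the implication "$\pc\in T_\alpha\Rightarrow\pi^{q^1,o}\cap T_\alpha\ne\emptyset$'' is strict, and in the intermediate regime $\alpha\asymp a^{3/2}$ the necessary condition "visits $T_\alpha$'' becomes typical while the target $C\alpha^2$ is still a nontrivial probability. Closing this gap requires the additional observation that $\pc$ is the first shared vertex of $\pi^{q^1,o}$ and $\pi^{q^2,o}$, so that one needs not merely a visit but a coincidence with one of the first $\alpha N$ vertices of $\pi^{q^2,o}$; this extra constraint is handled by a second application of the same transversal fluctuation technology to $\pi^{q^2,o}$, combined with the scaling relation $|q^1-q^2|=aN^{2/3}$ that produces the quadratic rather than the linear rate. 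Bookkeeping to cover general $\xi\in\ri\Uset$ is routine and only affects constants.
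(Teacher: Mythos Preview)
Your reduction is too weak to yield the bound. For any fixed $\alpha\in(0,1)$ and $N\ge(a/\alpha)^{3}$, the characteristic of $\pi^{q^1,o}$ already passes through $T_\alpha$: with $\xi=(\tfrac12,\tfrac12)$ the diagonal point $(aN^{2/3},aN^{2/3})$ satisfies all three constraints defining $T_\alpha$. Hence $\P(\pi^{q^1,o}\cap T_\alpha\ne\emptyset)\to 1$, not $O(\alpha^2)$. Your own numbers confirm this: with $D=aN^{2/3}-\alpha N$ and $T=aN^{2/3}+\alpha N$, the threshold $D$ is negative once $N>(a/\alpha)^3$, so the condition $D(\tau)\ge D$ is vacuous and the transversal bound says nothing. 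The ratio $r\asymp a\alpha^{-2/3}$ you quote would correspond to $D=aN^{2/3}$ and $T=\alpha N$, i.e.\ to the geodesic being forced to have anti-diagonal displacement $\ge aN^{2/3}$ at $\ell^1$-distance $\le\alpha N$; but $T_\alpha$ contains points with anti-diagonal coordinate as small as $aN^{2/3}-\alpha N$, so a visit to $T_\alpha$ forces no such deviation. The ``fix'' you sketch at the end is not really a fix of this reduction; it is a different argument altogether.

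The paper's proof controls both geodesics simultaneously at a single slice. At $x=(1-\alpha)\xi_1N$ (distance $\sim\alpha N$ from the $q^i$ end), the characteristics through $q^1$ and $q^2$ are separated by $a(1-\alpha)N^{2/3}$. Lemma~\ref{lem:cg}, via comparison with the stationary LPP of density $\rho\pm rN^{-1/3}$, shows each $\pi^{q^i,o}$ stays within $tN^{2/3}$ of its own characteristic there except with probability $C(\alpha^{2}t^{-3}+r^{-3})$; taking $t=\tfrac{a}{3}(1-\alpha)$ and $r=tA^{-1}\alpha^{-2/3}$ makes both terms $O(\alpha^2)$ and leaves the geodesics separated at the slice, hence not yet coalesced. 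Your last paragraph is groping toward this two-geodesic comparison, but once you track both geodesics relative to their characteristics at a common slice you have essentially reproduced the paper's argument. One more point: the phrase ``does not require local stationarity'' in the surrounding text refers to Theorem~\ref{thm:loc}, not to stationary LPP coupling in general; the paper's proof of Theorem~\ref{thm:ubc2} does couple with $G^{\rhoup{\rho}}$, $G^{\rhodown{\rho}}$ inside Lemma~\ref{lem:cg}.
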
 
\begin{figure}[t]
	\includegraphics[scale=1]{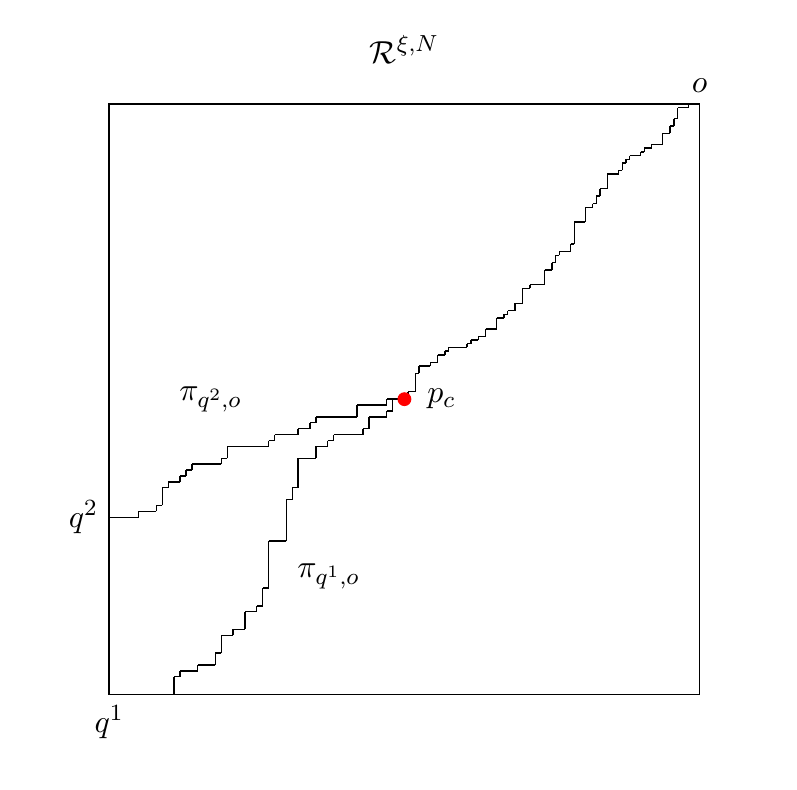}
	\caption{\small Two geodesics leaving from two points that are $aN^\frac{2}{3}$ far from one another, meet at the point $\pc$ (red). With high probability the point $\pc$ is not too close to the points $q^1,q^2,o$ on a macroscopic scale}\label{fig:coal}
\end{figure}
\subsection{Acknowledgements}  The authors thank B\'alint T\'oth for useful discussions and comments and Alan Hammond for guidance to the literature. 
\section{Preliminaries}	
	  \medskip{\bf Order on Geodesics}
	  \\ We would like to construct a partial order  on the set of non-intersecting paths in $\Z^2$. For $x,y\in \Z^2$ we write $x \preceq y$ if $y$ is below and to the right of $x$, i.e.
	  \begin{align}
	  	x_1 \leq y_1 \quad \text{and} \quad x_2\geq y_2.
	  \end{align}
	  We also write $x\ogs y$ if
	  \begin{align}
	  	x \og y \quad \text{and} \quad x\neq y
	  \end{align}
	  If $A,B\subset \Z^2$, we write $A \preceq B$ if 
	  \begin{align}
	  	x \preceq y \quad \forall x\in A,y\in B.
	  \end{align}
	  A down-right path is a bi-infinite sequence $\cY=(y_k)_{k\in\Z}$  in   $\Z^2$ such that    $y_k-y_{k-1}\in\{e_1,-e_2\}$ for all $k\in\Z$. Let $\dr$ be the set of infinite down-right paths in $\Z^2$. Let $\gamma_1,\gamma_2$ be two up-right paths in $\Z^2$ we write $\gamma_1\og \gamma_2$ if 
	  \begin{align}\label{og}
	  	\gamma_1\cap \cY \preceq \gamma_2\cap\cY \quad \forall\cY\in \dr,
	  \end{align}
	  where we assume the inequality to be vacuously true if one of the intersections in \eqref{og} is empty(see Figure \ref{fig:ogs}).
	  \begin{figure}[t]
	  	\includegraphics[scale=1]{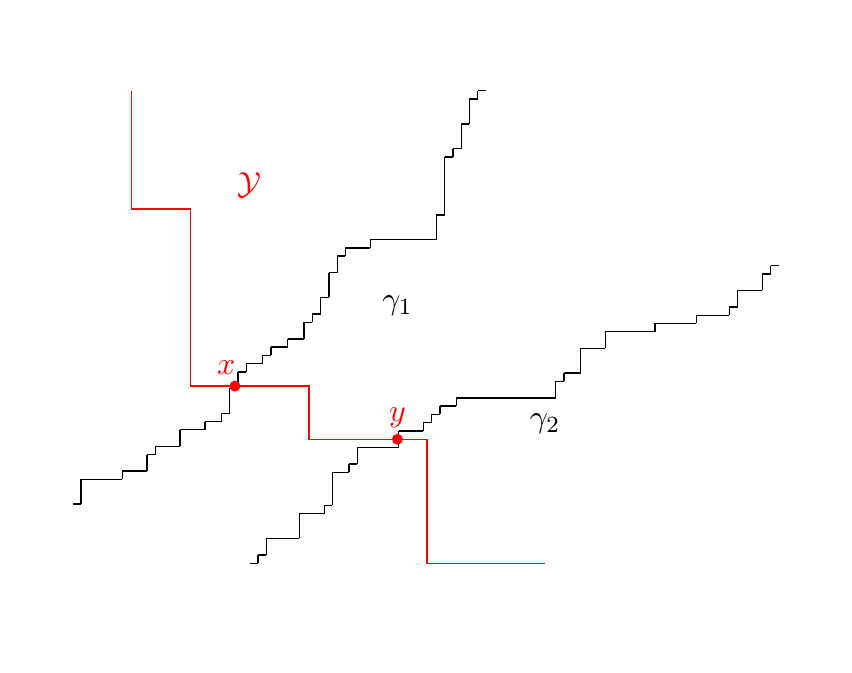}
	  	\caption{\small The two geodesics $\gamma_1$ and $\gamma_2$ are ordered i.e. $\gamma_1\ogs \gamma_2$. For any down-right path $\cY$ in $\Z^2$ the set of points $x=\cY\cap \gamma_1$ and  $y=\cY\cap \gamma_2$ are ordered, i.e. $x\ogs y$.}\label{fig:ogs}
	  \end{figure}
	
	  \medskip{\bf Stationary LPP} 
\\
	For each $o=(o_1,o_2)\in \Z^2$ and a parameter value $\rho\in(0,1)$ we introduce  the stationary  last-passage percolation process  $G^\rho_{o,\abullet}$ on $o+\Z_{\ge0}^2$.  This process has boundary conditions given by two independent sequences   
	\begin{align}\label{IJ}
		\{I^\rho_{o+ie_1}\}_{i=1}^{\infty} \quad\text{and}\quad 
		\{J^\rho_{o+je_2}\}_{j=1}^{\infty} 
			\end{align}
	 of i.i.d.\ random variables with marginal distributions $I^\rho_{o+e_1}\sim\text{Exp}(1-\rho)$ and $J^\rho_{o+e_2}\sim\text{Exp}(\rho)$.  Put  $G^\rho_{o,o}=0$ and on the boundaries 
	\be\label{Gr1} G^\rho_{o,\,o+\,ke_1}=\sum_{i=1}^k I_{ie_1} 
	\quad\text{and}\quad
	G^\rho_{o,\,o+\,le_2}= \sum_{j=1}^l  J_{je_2} .   \ee 
	Then in the bulk 
	for $x=(x_1,x_2)\in o+ \Z_{>0}^2$, 
	\be\label{Gr2}
	G^\rho_{o,\,x}= \max_{1\le k\le x_1-o_1} \;  \Bigl\{  \;\sum_{i=1}^k I_{o+ie_1}  + G_{o+ke_1+e_2, \,x} \Bigr\}  
	\bigvee
	\max_{1\le \ell\le x_2-o_2}\; \Bigl\{  \;\sum_{j=1}^\ell  J_{o+je_2}  + G_{o+\ell e_2+e_1, \,x} \Bigr\} .
	\ee
	For a  northeast endpoint  $p\in o+\Z_{>0}^2$, let $Z^\rho_{o,p}$  be the signed exit point of the geodesic $\pi^{o,p}_\bbullet$ of $G^\rho_{o,p}$ from the west and south boundaries  of $o+\Z_{>0}^2$. More precisely,
	\begin{align}\label{exit2}
	Z^\rho_{o,p}=
	\begin{cases}
	\argmax{k} \bigl\{ \,\sum_{i=1}^k I_{o+ie_1}  + G_{o+ke_1+e_2, \,x} \bigr\},  &\text{if } \pi^{o,p}_1=o+e_1,\\
	-\argmax{\ell}\bigl\{  \;\sum_{j=1}^\ell  J_{o+je_2}  + G_{o+\ell e_2+e_1, \,x} \bigr\},  &\text{if } \pi_1^{o,p}=o+e_2.
	\end{cases}
	\end{align}
 The value $G^\rho_{o,x}$ can be determined by \eqref{Gr1} and the following recursive relation
 \begin{align}\label{recu}
 	G^\rho_{o,x}=\omega_x+G^\rho_{o,x-e_1}\vee G^\rho_{o,x-e_2}.
 \end{align}
 Relation \eqref{recu} implies that one can backtrack the geodesic $\pi^{o,p}$ in the box $[o+e_1+e_2,p]$ in the following way; for each (directed) edge $(x,y)$ in $[o+e_1+e_2,p]$ assign the weight $\wg_{x,y}=G^\rho_{o,y}-G^\rho_{o,x}$. Let $m=|p-o|$, and denote $p_i=\pi^{o,p}_{i}$. We have
 \begin{align}\label{recu2}
 	&p_m=p,\\
 	&p_i=
 	\begin{cases}
 	p_{i+1}-e_1 &\mbox{if } \wg_{p_{i+1}-e_1,p_{i+1}}<\wg_{p_{i+1}-e_2,p_{i+1}} \\ \nonumber
 	p_{i+1}-e_2 & \mbox{if } \wg_{p_{i+1}-e_1,p_{i+1}}>\wg_{p_{i+1}-e_2,p_{i+1}}\end{cases}\quad |Z^\rho_{o,p}| \leq i\leq m-1.
 \end{align}
 In other words, we trace the geodesic $\pi^{o,p}$ backwards up to the exit point from the boundaries, by following the edges on which the increments of the process $G^\rho_{o,p}$ are minimal.\\ Next we consider LPP maximizing down-left paths. For $y\leq o$, define
 	\be\label{v:Gr}  
 \rim{G}_{o,y}=G_{y,o}. 
 \ee
  For each $o\in\Z^2$ and a parameter value $\rho\in(0,1)$ define a stationary last-passage percolation processes   $\rim{G}^\rho$ on $o+\mathbb{Z}^2_{\le0}$, with boundary variables  on the north and east,   in the following way. Let
\begin{align}\label{IJ hat}
&\{I^\rho_{o-ie_1}\}_{i=1}^{\infty}
\quad\text{and}\quad 
\{J^\rho_{o-je_2}\}_{j=1}^{\infty}  
\end{align}
be mutually independent sequences of i.i.d. random variables with marginal distributions $I^\rho_{o-ie_1}\sim\text{Exp}(1-\rho)$ and $J^\rho_{o-je_2}\sim\text{Exp}(\rho)$. The boundary variables  in \eqref{IJ} and those   in \eqref{IJ hat} are taken independent of each other. Put  $\rim{G}^\rho_{o,\,o}=0$ and on the boundaries 
\be\label{Gr11} \rim{G}^\rho_{o,\,o-ke_1}=\sum_{i=1}^k I_{o-ie_1} 
\quad\text{and}\quad
\rim{G}^\rho_{o,\,o-le_2}= \sum_{j=1}^l  J_{o-je_2} .   \ee 
Then in the bulk 
for $x=(x_1,x_2)\in o+ \Z_{<0}^2$, 
\be\label{Gr12}
\rim{G}^\rho_{o,\,x}= \max_{1\le k\le o_1-x_1} \;  \Bigl\{  \;\sum_{i=1}^k I_{o-ie_1}  + \rim{G}_{ \,o-ke_1-e_2,x} \Bigr\}  
\bigvee
\max_{1\le \ell\le o_2-x_2}\; \Bigl\{  \;\sum_{j=1}^\ell  J_{o-je_2}  + \rim{G}_{ \,o-\ell e_2-e_1,x} \Bigr\} .
\ee
For a southwest endpoint  $p\in o+\Z_{<0}^2$, let $\rim{Z}^\rho_{o,p}$  be the signed exit point of the geodesic $\pi^{o,p}_{\bbullet}$ of $\rim{G}^\rho_{o,p}$ from the north and east boundaries  of $o+\Z_{<0}^2$.   Precisely,
\begin{align}\label{exit}
\rim{Z}^\rho_{o,\,x}=
\begin{cases}
\argmax{k} \bigl\{ \,\sum_{i=1}^k I_{o-ie_1}  + \rim{G}_{\,o-ke_1-e_2,x} \bigr\},  &\text{if } \pi^{o,x}_1=o-e_1,\\
-\argmax{\ell}\bigl\{  \;\sum_{j=1}^\ell  J_{o-je_2}  + \rim{G}_{\,o-\ell e_2-e_1,x} \bigr\},  &\text{if } \pi_1^{o,x}=o-e_2.
\end{cases}
\end{align}
Similar to \eqref{recu2},  one can backtrack the geodesic $\pi^{o,p}$ in the box $[p,o-e_1-e_2]$ in the following way; for each edge $(x,y)$ (where $y\leq x$) in $[p,o-e_1-e_2]$ assign the weight 
\begin{align}\label{W}
	\rim{\wg}_{y,x}=\rim{G}^\rho_{o,y}-\rim{G}^\rho_{o,x}.
\end{align}
 Let $p_i=\pi^{o,p}_i$, we have
	\begin{align}\label{Gdr}
	&p_m=p,\\
	&p_{i}=
	\begin{cases}
	p_{i+1}+e_1 &\mbox{if } \rim{\wg}_{p_{i+1},p_{i+1}+e_1}<\rim{\wg}_{p_{i+1},p_{i+1}+e_2} \\ \nonumber
	p_{i+1}+e_2 & \mbox{if } \rim{\wg}_{p_{i+1},p_{i+1}+e_1}>\rim{\wg}_{p_{i+1},p_{i+1}+e_2} 
	\end{cases}\quad |\rim{Z}^\rho_{o,p}| \leq i\leq m-1.
	\end{align}
	Since 
	\begin{align}
		\omega_x=(\rim{G}^\rho_{o,x}-\rim{G}^\rho_{o,x+e_1})\wedge (\rim{G}^\rho_{o,x}-\rim{G}^\rho_{o,x+e_2})
	\end{align}
we see that \eqref{Gdr} can be written as
\begin{align}\label{recu Gd}
\pi^{o,x}_n&=x\\
	\omega_{\pi^{o,x}_i}&=w_{\pi^{o,x}_i,\pi^{o,x}_{i-1}}.\nonumber
\end{align}
	The following is a construction we shall refer to often. For general weights $\{Y_x\}_{x\in \Z^2}$ on the lattice and a point $u\in \Z^2$, let $G_{u,x}$ be the LPP by 
	\be
	\Gpp_{u,x}=\max_{x_{\brbullet}\,\in\,\Pi_{u,x}}\sum_{k=0}^{\abs{x-u}_1}Y_{x_k}\quad\text{ for }  y\in u+\Z_{\geq 0}^2. 
	\ee
	Now let $v\in \Z^2$ be such that $u\leq v$. One can construct a new LPP on $\Z^2_{> v}$ as follows. Define the south-west boundary weight
		\begin{align}\label{be4}
	I^{[u]}_{v+ke_1}=& G_{u,v+ke_1}-G_{u,v+(k-1)e_1}
	\qquad\text{for $1 \leq k \leq \infty$},\\
	J^{[u]}_{v+ke_2}=& G_{u,v+ke_2}-G_{u,v+(k-1)e_2}
	\qquad\text{for $1 \leq k \leq \infty$}\nonumber.
	\end{align}
	Let $\{G^{[u]}_{v,x}\}_{x\in \Z_{> v}}$ be the LPP defined through relations \eqref{Gr1}--\eqref{Gr2} using the boundary conditions \eqref{be4} and the bluk weights $\{Y_x\}_{x\in \Z_{> v}}$. We call $G^{[u]}$ \textit{the induced LPP at $v$ by $G_{u,x}$}. 
	The superscript $[u]$ indicates  that   $G^{[u]}$ uses boundary weights determined by the process $G_{u,\bbullet}$  with base point $u$.     Figure \ref{fig-app1}    illustrates the next lemma.  The proof of the lemma is elementary. 
	
	\begin{figure}
		\begin{center}
			\begin{picture}(200,140)(20,-10)
			\put(40,0){\line(1,0){170}} 
			
			\put(90,40){\line(1,0){120}}\put(210,0){\line(0,1){110}}
			\put(40,110){\line(1,0){170}}
			\put(40,0){\line(0,1){110}} \put(90,40){\line(0,1){70}}
			
			\put(37,-3){\Large$\bullet$} 
			\put(30,-2){\small$u$}
			
			\put(86.5,36.5){\Large$\bullet$} 
			\put(80,30){\small$v$}
			
			\put(126.5,36.5){\Large$\bullet$} 
			\put(134,32){\small$x$}
			
			%
			
			\put(206,106){\Large$\bullet$} 
			\put(215,107){\small$y$}
			
			
			\linethickness{3pt} 
			\put(44.5,0){\line(1,0){72}}  \put(115,0){\line(0,1){21}} \put(114,20){\line(1,0){17.5}}
			\put(130,20){\line(0,1){16}}  \put(130,44){\line(0,1){16}} 
			\put(128.5,61.5){\line(1,0){60}} \put(189,60){\line(0,1){21.5}}
			\put(189,80){\line(1,0){22}}  \put(210,78.5){\line(0,1){27}}
			\multiput(94.5,40)(8.5,0){4}{\line(1,0){5.5}}
			
			\end{picture}
		\end{center}  
		\caption{ \small Illustration of Lemma \ref{app-lm1}. Path $u$-$x$-$y$ is a geodesic  of  $G_{u,y}$ and path $v$-$x$-$y$ is a geodesic of  $G^{[u]}_{v,y}$. } \label{fig-app1}
	\end{figure}

	\begin{lemma} \label{app-lm1}  Let $u\le v\le y$ in $\Z^2$.  Then  
		$G_{u,y}=G_{u,v}+G^{[u]}_{v,y}$.    The restriction of any geodesic of $G_{u,y}$ to $v+\Z_{\ge0}^2$ is part of a geodesic of  $G^{[u]}_{v,y}$.    The edges with one endpoint in $v+\Z_{>0}^2$ that belong to 
		a geodesic of $G^{[u]}_{v, y}$  extend to a geodesic of $G_{u,y}$.  
	\end{lemma}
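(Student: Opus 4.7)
The plan is to establish the identity $G_{u,y}=G_{u,v}+G^{[u]}_{v,y}$ by induction on $|y-v|_1$, and then read off the geodesic statements from the backward-trace description of geodesics in \eqref{recu2}.

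Base and boundary cases: at $y=v$ both sides equal $G_{u,v}$ since $G^{[u]}_{v,v}=0$. If $y=v+ke_1$ lies on the south boundary of $v+\Z_{\ge 0}^2$, the definition \eqref{be4} together with the boundary formula \eqref{Gr1} applied to $G^{[u]}$ gives the telescoping sum
\[
G^{[u]}_{v,v+ke_1}=\sum_{i=1}^k I^{[u]}_{v+ie_1}=G_{u,v+ke_1}-G_{u,v},
\]
and the symmetric argument with $J^{[u]}$ handles the west boundary. For $y\in v+\Z_{>0}^2$ both processes satisfy the same one-step recurrence: the ordinary LPP gives $G_{u,y}=Y_y+G_{u,y-e_1}\vee G_{u,y-e_2}$, while \eqref{Gr2} specialized to a strictly interior $y$ yields $G^{[u]}_{v,y}=Y_y+G^{[u]}_{v,y-e_1}\vee G^{[u]}_{v,y-e_2}$ with the same bulk weight $Y_y$. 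Since $y\in v+\Z_{>0}^2$ forces $y-e_1,y-e_2\in v+\Z_{\ge 0}^2$, the inductive hypothesis closes the induction.

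For the geodesic statements I would use the backward trace \eqref{recu2}: at every $x\in v+\Z_{>0}^2$, the incoming step of either geodesic is $e_1$ or $e_2$ according to whichever of $G_{u,x-e_1}$ vs.\ $G_{u,x-e_2}$ (resp.\ $G^{[u]}_{v,x-e_1}$ vs.\ $G^{[u]}_{v,x-e_2}$) is larger. The identity just proved makes these two comparisons coincide, so both backward traces pick the same predecessor at every strictly interior $x$, and hence the two geodesics share every edge with at least one endpoint in $v+\Z_{>0}^2$. The $G_{u,y}$-geodesic enters $v+\Z_{\ge 0}^2$ at some first point $z$, and $z$ must lie on the south or west boundary of $v+\Z_{\ge 0}^2$ (otherwise both $z-e_1,z-e_2$ would already lie in $v+\Z_{\ge 0}^2$, contradicting that $z$ is the first such point). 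From $z$ onwards its restriction agrees with the $G^{[u]}_{v,y}$-geodesic, and the latter completes backwards along the relevant boundary axis from $z$ to $v$, exhibiting the restriction as part of a $G^{[u]}_{v,y}$-geodesic. Conversely, the edges of a $G^{[u]}_{v,y}$-geodesic that touch $v+\Z_{>0}^2$ form its interior portion from some boundary-exit point $w$ to $y$; prepending any $G_{u,w}$-geodesic produces a path from $u$ to $y$ whose weight equals $G_{u,w}+G^{[u]}_{v,y}-G^{[u]}_{v,w}=G_{u,v}+G^{[u]}_{v,y}=G_{u,y}$ upon applying the identity at $w$ and at $y$, hence a $G_{u,y}$-geodesic.

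The only mild bookkeeping concern is the boundary-to-interior jump: the step by which a $G^{[u]}_{v,y}$-geodesic first leaves the south or west boundary has exactly one endpoint in $v+\Z_{>0}^2$ and is therefore precisely the kind of edge covered by the second geodesic claim. Since $G^{[u]}_{v,v+ke_1}=G_{u,v+ke_1}-G_{u,v}$ on the boundary, the arrival weights used by the two backward traces agree up to the additive constant $G_{u,v}$, so the jump is chosen consistently in both problems. The argument is entirely deterministic and requires nothing beyond the recursive definitions, consistent with the authors' remark that the proof is elementary.
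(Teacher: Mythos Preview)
Your proof is correct. The paper does not actually supply a proof of this lemma, remarking only that ``the proof of the lemma is elementary''; your induction on $|y-v|_1$ for the identity together with the backward-trace matching for the geodesic claims is exactly the kind of elementary argument the authors have in mind.
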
  
	In case the process inherited is associated to a stationary process $G^\rho$ we shall use the notation $G^{\rho,[u]}$ to indicate the density $\rho$ as well. Similarly, if $\rim{G}_{u,x}$ is a LLP on $\Z^2_{<u}$ for some $u\in\Z^2$, if $v<u$, we can construct the induced process $\rim{G}^{[u]}_{v,x}$ on $\Z^2_{<v}$. A result similar to Lemma \ref{app-lm1} holds for $\rim{G}_{u,x}$ and $\rim{G}^{[u]}_{v,x}$.

\section{Busemann functions} 
\subsection{Existence and properties of Busemann functions}
Let $(\Omega,\cF,\P)$ be a probability space and let $\{\tau_z\}_{z\in\Z^2}$ be a group of translations on $\Omega$.
\begin{definition}
A measurable function $B:\Omega \times\Z^2 \times \Z^2\rightarrow \R$ is a stationary cocycle if it satisfies these two conditions for $\P$-a.e. $\omega$ and all $x,y,z\in \Z^2$:
\begin{align}
	B(\omega,x+z,y+z)&=B(\tau_z\omega,x,y) \quad \text{(stationarity)}\\
	B(\omega,x,y)+B(\omega,y,z)&=B(\omega,x,z)\quad \text{(additivity)}
\end{align}
\end{definition}
Given a down-right path $\cY\in \dr$, the  lattice decomposes  into a disjoint union  $\Z^2=\cG_-\cup\cY\cup\cG_+$  where   the two regions are 
\[  \cG_-=\{x\in \Z^2:  \exists j\in\Z_{>0}  \text{ such that  } x+j(e_1+e_2)\in \cY\} \]
and 
\[  \cG_+=\{x\in \Z^2:  \exists j\in\Z_{>0}  \text{ such that } x-j(e_1+e_2)\in \cY\} . \]
\\

\begin{definition} \label{v:d-exp-a} 
	Let $0<\alpha<1$.   Let us say that a process 
	\be\label{IJw800} 
	\{ \eta_{x} ,\,   I_{x}, \, J_{x},  \,\wc\eta_{x}  :  x\in \Z^2\} 
	\ee
	is an {\it exponential-$\alpha$  last-passage percolation system}  if the following properties  {\rm (a)--(b)} hold. \\[-7pt] 
	\begin{enumerate}[{\rm(a)}]\itemsep=7pt 
		\item  The process is stationary with marginal distributions  
		\be\label{w19}\begin{aligned}
			\eta_{x},\, \wc\eta_x \sim \text{\rm Exp}(1),  \quad I_{x}\sim \text{\rm Exp}(1-\alpha)  , \quad \text{ and }  \quad  J_{x}\sim \text{\rm Exp}(\alpha)  .  
		\end{aligned}\ee 
		For any down-right path $\cY=(y_k)_{k\in\Z}$  in   $\Z^2$, the random variables 
		\be\label{v:Y4}   
		\{\wc\eta_z:  z\in\cG_-\}, \quad  \{  \tincr(\{y_{k-1},y_k\}): k\in\Z\}, \quad\text{and}\quad 
		\{  \eta_x: x\in\cG_+\} 
		\ee
		are all mutually independent, where the undirected edge variables $\tincr(e)$ are defined  as 
		\be\label{v:tincr6}   \tincr(e)= \begin{cases}  I_x &\text{if $e=\{x-e_1,x\}$}\\ J_x &\text{if $e=\{x-e_2,x\}$.} \end{cases}  \ee

		\item  The following equations  are in force at all $x\in\Z^2$: 
		\begin{align}
		\label{IJw5.1.7}  \wc\eta_{x-e_1-e_2}&=I_{x-e_2}\wedge J_{x-e_1} 
		\\
		\label{IJw5.2.7}  I_x&=\eta_x+(I_{x-e_2}-J_{x-e_1})^+ 
		\\
		\label{IJw5.3.7} J_x&=\eta_x+(I_{x-e_2}-J_{x-e_1})^-. 
		\end{align}

	\end{enumerate}
\end{definition}

The following Theorem was proven in \cite{sepp-arxiv-18} 

\begin{theorem}\label{t:buse}  
	For each  $0<\alpha<1$  there exist a stationary   cocycle $B^\alpha$ and a family of random weights  $\{\Xw^\alpha_x\}_{x\in\Z^2}$  on $\OSP$   with the following properties.  
	\begin{enumerate}\itemsep=3pt
		\item[{\rm(i)}]   For each $0<\alpha<1$,  process 
		\begin{align}\label{v:bf}
			\{  \Xw^\alpha_x, \, B^\alpha_{x-e_1,x}, \, B^\alpha_{x-e_2,x}, \,\Yw_x:  x\in\Z^2\}
		\end{align}
		is an exponential-$\alpha$ last-passage system as described in Definition \ref{v:d-exp-a}. 
		
		\item[{\rm(ii)}]     There exists a single event $\Omega_2$ of full probability such that for all $\w\in\Omega_2$,   all $x\in\Z^2$ and all $\lambda<\rho$ in $(0,1)$ we have the inequalities 
		\be\label{v:853.1} 
		B^\lambda_{x,x+e_1}(\w) \le  B^\rho_{x,x+e_1}(\w) \quad\text{and}\quad 
		B^\lambda_{x,x+e_2}(\w) \ge  B^\rho_{x,x+e_2}(\w).   
		\ee
		Furthermore,  for all $\w\in\Omega_2$ and $x,y\in\Z^2$,  the function $\lambda\mapsto B^\lambda_{x,y}(\w)$ is right-continuous with left limits. 
		
		\item[{\rm(iii)}]     For each fixed $0<\alpha<1$ there exists an event $\Omega^{(\alpha)}_2$ of full probability such that the following holds: for each $\w\in\Omega^{(\alpha)}_2$  and any  sequence $v_n\in\Z^2$ such that $\abs{v_n}_1\to\infty$ and 
		\be\label{v:853.3}     \lim_{n\to\infty}  \frac{v_n}{\abs{v_n}_1} \, = \,\xi(\alpha)\,=\, 
		\biggl(  \frac{(1-\alpha)^2}{(1-\alpha)^2+\alpha^2} \,,   \frac{\alpha^2}{(1-\alpha)^2+\alpha^2}\biggr) ,  
		\ee
		we have the limits  
		\be\label{v:855}   
		B^\alpha_{x,y}(\w)  =\lim_{n\to\infty} [ G_{x, v_n}(\w)-G_{y,v_n}(\w)]  \qquad\forall x,y\in\Z^2. \ee
		The LPP process $G_{x,y}$ is now defined by \eqref{v:G}.  
		Furthermore,  for all $\w\in\Omega^{(\alpha)}_2$ and $x,y\in\Z^2$,  
		\be\label{v:855.7}   \lim_{\lambda\to\alpha}B^\lambda_{x,y}(\w)=B^\alpha_{x,y}(\w). \ee 
		
	\end{enumerate} 
	
\end{theorem}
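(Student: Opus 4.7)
The plan is to construct the family $\{B^\alpha\}_{\alpha\in(0,1)}$ of stationary cocycles on a single probability space together with compatible bulk weights, and then verify (i)--(iii) in turn. The starting point is stationary LPP. For each $\alpha$, build $G^\alpha$ on a quadrant with i.i.d.\ boundary increments of rates $1-\alpha$ and $\alpha$ on the two axes and i.i.d.\ $\text{Exp}(1)$ bulk weights. A Burke-type property, proved by induction along shrinking down-right boundaries, shows that along any down-right path the horizontal and vertical edge increments of $G^\alpha$ remain independent with the same exponential marginals, while the reduced weights $\wc\eta_x = I_x\wedge J_x$ to the south-west of the path are i.i.d.\ $\text{Exp}(1)$ and independent of those increments. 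This is precisely the joint structure \eqref{w19}--\eqref{IJw5.3.7}, so any cocycle arising as a Busemann limit from $G^\alpha$ will automatically satisfy (i).

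Next I would exhibit the limit in (iii). The natural candidate is $B^\alpha_{x,y}=\lim_n[G_{x,v_n}-G_{y,v_n}]$ along any sequence $v_n$ with $v_n/|v_n|_1\to \xi(\alpha)$. The idea is to sandwich $G_{x,v_n}-G_{y,v_n}$ between the corresponding increments of a stationary LPP $G^{\alpha,[u]}$ whose boundary is pushed to the south-west of both $x$ and $y$, and then let that boundary recede to $-\infty$. The key quantitative input is the $O(N^{2/3})$ bound on the exit point of the stationary geodesic from its boundary when the endpoint lies in direction $\xi(\alpha)$: when the stationary geodesic does not exit the boundary between the projections of $x$ and $y$, the stationary and non-stationary increments of $G$ agree, so the sandwich collapses and the limit is identified with the stationary increment of $G^\alpha$. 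Burke then transfers the joint distribution to the limit, completing the proof of (i) for the specific realisation produced here.

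To place all $\alpha$ on one probability space with the monotonicity (ii), I would invoke the queueing/RSK coupling of \cite{fan-sepp-arxiv}: a single family of i.i.d.\ $\text{Exp}(1)$ weights drives $G^\alpha$ for every $\alpha\in(0,1)$, and the input-output map is monotone in the service rate, which yields \eqref{v:853.1} simultaneously on a full-measure event. Right-continuity in $\alpha$ is then arranged by using the rationals as a dense scaffold and defining $B^\alpha$ as the right-continuous version in $\alpha$ coordinatewise; left limits exist by monotonicity. The squeeze $B^{\alpha-\varepsilon}\le B^\alpha\le B^{\alpha+\varepsilon}$ (with the sign conventions of \eqref{v:853.1}), combined with continuity of $\xi(\cdot)$ and the shape theorem, then gives \eqref{v:855.7}, and simultaneously lets one promote the limit in Step~2 from special $v_n$ to every sequence with $v_n/|v_n|_1\to\xi(\alpha)$.

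The main obstacle is the exit-point bound underlying Step~2: proving that in stationary LPP with parameter $\alpha$, the geodesic to an endpoint in direction $\xi(\alpha)$ exits the boundary within $O(N^{2/3})$ of the corner. This is the Cator--Groeneboom / Bal\'azs--Cator--Sepp\"al\"ainen estimate, obtained from the variance identity for $G^\alpha$ together with a coupling/competition argument with the bulk; without it the sandwich in Step~2 does not close. Everything else---Burke, the monotone queueing coupling, and the shape theorem---is soft once that technical input is in hand.
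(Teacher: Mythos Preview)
The paper does not prove Theorem~\ref{t:buse}; it simply quotes the result and attributes the proof to \cite{sepp-arxiv-18} (the sentence immediately preceding the theorem reads ``The following Theorem was proven in \cite{sepp-arxiv-18}''). So there is no in-paper proof to compare your proposal against.

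That said, your sketch is broadly in line with the construction in the cited reference and its predecessors (Georgiou--Rassoul-Agha--Sepp\"al\"ainen, Cator--Pimentel): build candidate Busemann increments from stationary LPP, use the exit-point/variance estimates to close a sandwich and identify the Busemann limit \eqref{v:855}, and read off the exponential-$\alpha$ system structure (i) from Burke. One inaccuracy worth flagging: in \cite{sepp-arxiv-18} the monotonicity \eqref{v:853.1} and the cadlag property in (ii) are obtained directly from the comparison (crossing) inequalities for LPP increments and a countable-dense-parameter construction, \emph{not} from the queueing/RSK coupling of \cite{fan-sepp-arxiv}. In the present paper the Fan--Sepp\"al\"ainen coupling appears separately as Lemma~\ref{Bus cop} and is used for the joint distribution of Busemann functions at two different parameters, which is a finer statement than (ii). Conflating the two sources does not break your outline, but it misattributes where the monotonicity comes from.
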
 

	{\bf Busemann Functions and Infinite Geodesics.} Fix $x\in \Z^2$. An infinite up-right path $\pi^{x,\infty}_\bbullet$ originating at $x$ is called a geodesic if for all $m,n\in\Z_{>0}$ such that $m<n$, the path  $\{\pi^{x,\infty}_l\}_{l\in\lzb m,n\rzb}$ is a geodesic. We say a geodesic has  direction $\xi\in \ri \Uset$ if
	\begin{align}
	\lim_{n\rightarrow \infty} \frac{\pi^{x,\infty}_n(2)}{\pi^{x,\infty}_n(1)}=\frac{\xi_2}{\xi_1}.
	\end{align}
	To each direction $\xi\in \ri \Uset$  we associate a density $\rho\in(0,1)$ through the relations
	\begin{align}\label{rhoxi}
	\xi(\rho)=\left(\frac{(1-\rho)^2}{(1-\rho)^2+\rho^2},\frac{\rho^2}{(1-\rho)^2+\rho^2}\right)
	\end{align}
	In some literature $\xi(\rho)$ is called  the  {\it  characteristic direction}   associated with the parameter $\rho$. It is known that for $\xi\in \ri \Uset$, with probability one, every point $x\in \Z^2$ has a unique geodesic $\pi^{x,\infty}$ of direction $\xi$.
	Busemann functions can be used to construct infinite geodesic. Consider the family of random variables
	\begin{align}
	\{  B^\alpha_{x-e_1,x}, \, B^\alpha_{x-e_2,x}, \,\Yw_x:  x\in\Z^2\}
	\end{align}
	defined in \eqref{v:bf}. Let $\xi:=\xi(\alpha)$ be the characteristic direction associated with $\rho$. Let us denote by $\pi^{x,\xi\infty}$ the  infinite geodesic with respect to the weights $\{\omega_x\}_{x\in\Z}$, starting from $x$ in direction $\xi$. In \cite{geor-rass-sepp-17-buse}, it was shown that one can trace the infinite geodesic $\pi^{x,\infty}$ by following the gradient of the Busemann function $B^\alpha$. Let $\{p_i\}_{i\in \Z_{>0}}$  be an enumeration of the vertices in $\pi^{x,\xi\infty}$, i.e.
	\begin{align*}
		p_i=\pi^{x,\xi\infty}_i \quad i\in \Z_{\geq 0},
	\end{align*}
	where $p_0=x$. Then the vertices $\{p_i\}_{i\in\Z_{\geq 0}}$ are given recursively through
	\begin{align}\label{recu3}
	&p_0=x,\\
	&p_i=
	\begin{cases}
	p_{i-1}+e_1 &\mbox{if } B^\alpha_{p_{i-1},p_{i-1}+e_1}<B^\alpha_{p_{i-1},p_{i-1}+e_2} \\ \nonumber
	p_{i-1}+e_2 & \mbox{if } B^\alpha_{p_{i-1},p_{i-1}+e_1}>B^\alpha_{p_{i-1},p_{i-1}+e_2}\end{cases}\quad i\in \Z_{> 0}.
	\end{align}
	Note that in \eqref{recu3} $p_i$ is attained by taking an up\textbackslash right step from the point $p_{i-1}$ in the direction where the minimal increment of the Busemann function is attained. Since $\omega_x=B^\alpha_{x,x+e_1}\wedge B^\alpha_{x,x+e_2}$, $\pi^{x,\xi\infty}$ is the unique path that satisfies 
	\begin{align}
	\pi^{x,\infty}_0&=x\\
	\Yw_{\pi^{x,\infty}_i}&=B^\rho_{\pi^{x,\infty}_i,\pi^{x,\infty}_{i+1}}.
	\end{align}
	\begin{lemma}\label{lem:ing}
		Let $x\in \Z^2$ and $\xi_1,\xi_2\in \ri\Uset$ such that $\xi_1\og \xi_2$. For $i\in\{1,2\}$ let $\pi^{x,\xi_i\infty}$ be the infinite geodesic starting from $x$ in direction $\xi_i$. Then 
		\begin{align}\label{gin}
			\pi^{x,\xi_1\infty}\og \pi^{x,\xi_2\infty}
		\end{align}
	\end{lemma}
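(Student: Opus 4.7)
The plan is to combine the monotonicity of Busemann functions in the parameter $\alpha$ from Theorem \ref{t:buse}(ii) with the gradient construction of the infinite geodesic in \eqref{recu3}, and carry out an induction along antidiagonals. By \eqref{rhoxi}, the first coordinate of $\xi(\alpha)$ is a strictly decreasing function of $\alpha$, so the hypothesis $\xi_1 \og \xi_2$ translates to the parameter ordering $\rho_1 \geq \rho_2$. Then \eqref{v:853.1} yields, on a full-measure event, $B^{\rho_1}_{p,p+e_1} \geq B^{\rho_2}_{p,p+e_1}$ and $B^{\rho_1}_{p,p+e_2} \leq B^{\rho_2}_{p,p+e_2}$ at every $p\in\Z^2$. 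Setting $D^i(p) := B^{\rho_i}_{p,p+e_1} - B^{\rho_i}_{p,p+e_2}$, this gives the key pointwise inequality $D^1(p) \geq D^2(p)$.

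Next I would read off the step-direction comparison from \eqref{recu3}: whenever $\pi^{x,\xi_i\infty}$ visits a vertex $p$, it departs along $e_1$ if $D^i(p) < 0$ and along $e_2$ if $D^i(p) > 0$. Combined with $D^1 \ge D^2$, this says: at any vertex $p$ visited by both geodesics, if $\pi^{x,\xi_1\infty}$ takes an $e_1$-step at $p$ then so does $\pi^{x,\xi_2\infty}$; equivalently, if $\pi^{x,\xi_2\infty}$ takes an $e_2$-step at $p$ then so does $\pi^{x,\xi_1\infty}$.

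The heart of the argument is then an induction on $n \ge 0$ showing that the $n$-th vertices satisfy $\pi^{x,\xi_1\infty}_n \og \pi^{x,\xi_2\infty}_n$. Since both vertices lie on the antidiagonal at $\abs{\cdot}_1$-distance $n$ from $x$, this reduces to the single first-coordinate inequality $(\pi^{x,\xi_1\infty}_n)_1 \le (\pi^{x,\xi_2\infty}_n)_1$. The base case $n=0$ is immediate. For the step, if the inequality is already strict at level $n$ then it persists at level $n+1$ because any up-right step raises the first coordinate by at most one; if equality holds then both geodesics visit a common vertex $p$, and the preceding observation forces $\pi^{x,\xi_1\infty}$ to take $e_1$ at $p$ only when $\pi^{x,\xi_2\infty}$ also does, preserving the inequality. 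Finally, since any down-right path $\cY$ meets each geodesic at vertices whose antidiagonal levels range in an interval on which the $\og$-order on $\cY$ coincides with the antidiagonal order, the pointwise comparison at all antidiagonals yields \eqref{gin}. I do not anticipate a serious obstacle; the only delicate point is the case split in the induction, which is neatly handled by the single-step bound on the first coordinate.
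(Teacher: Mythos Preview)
Your argument is correct, but the route is genuinely different from the paper's. The paper proves the lemma by contradiction: if \eqref{gin} fails, then at some stage $\pi^{x,\xi_2\infty}$ lies strictly above-left of $\pi^{x,\xi_1\infty}$, while the asymptotic directions force the opposite ordering eventually; hence the two semi-infinite geodesics meet at some $z>x$, and their restrictions to $[x,z]$ give two distinct finite geodesics from $x$ to $z$, contradicting a.s.\ uniqueness. Your proof instead uses the monotonicity of Busemann functions \eqref{v:853.1} and the gradient description \eqref{recu3} to run a step-by-step induction along antidiagonals, showing $(\pi^{x,\xi_1\infty}_n)_1\le (\pi^{x,\xi_2\infty}_n)_1$ for every $n$. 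The paper's argument is shorter and needs only directedness plus uniqueness of finite geodesics; your argument imports more structure (the coupled Busemann functions) but is entirely constructive and sidesteps the asymptotic-direction reasoning.

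One comment on your last paragraph: the assertion that ``the $\og$-order on $\cY$ coincides with the antidiagonal order'' is not literally true on all of $\cY$ (the antidiagonal level is not monotone along a general down-right path). The clean way to finish is to note that if $p\in\pi^{x,\xi_1\infty}\cap\cY$ and $q\in\pi^{x,\xi_2\infty}\cap\cY$ satisfied $q\prec p$, then letting $n=|q-x|_1$ one has $(\pi^{x,\xi_1\infty}_n)_1\le q_1$, which together with $q\preceq p\in\pi^{x,\xi_1\infty}$ forces $\pi^{x,\xi_1\infty}_n=q$; a short case analysis then shows both geodesics share the entire segment between $p$ and $q$, so no genuine crossing occurs. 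This is the same harmless degeneracy (shared horizontal or vertical stretches) that the paper's own proof glosses over, so your argument is on equal footing.
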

	\begin{proof}
	Suppose \eqref{gin} does not hold. Then there exists $y_1\in \pi^{x,\xi_1\infty}$ and $y_2\in \pi^{x,\xi_2\infty}$ such that $x \leq y_1,y_2$ and $y_2 \ogs y_1$. Since both geodesics have a direction and $\xi_1\og \xi_2$, there exists $w_1\in \pi^{x,\xi_1\infty}$ and $w_2\in \pi^{x,\xi_2\infty}$ such that $y_1,y_2\leq w_1,w_2$ and $w_1 \og w_2$. It follows that there exists a point $x \leq z$ such that $z\in \pi^{x,\xi_1\infty}\cap \pi^{x,\xi_2\infty}$ and that the geodesics
	\begin{align}
		\gamma_1&=\pi^{x,\xi_1\infty}\cap [x,z]\\
		\gamma_2&=\pi^{x,\xi_2\infty}\cap [x,z],
	\end{align}
	start from $x$ and terminate at $z$. As $y_2 \ogs y_1$ it follows that $\gamma_1\neq \gamma_2$ which violates the uniqueness of geodesics.
	\end{proof}
	\subsection{Coupling Busemann Functions}
	In \cite{fan-sepp-arxiv} a coupling between  Busemann functions of different densities was given which relies on the queueing mapping. Consider the queueing mapping $D:\R_+^\Z \rightarrow \R_+^\Z$ from Appendix \ref{app:queues}. 
	\begin{lemma}\label{Bus cop}
	  Let $0<\rhodown{\rho}<\rhoup{\rho}<1$. There exists a coupling of $B^{\rhodown{\rho}}$ and $B^{\rhoup{\rho}}$ such that
		\begin{enumerate}[{\rm(i)}] 
		\item for every $x\in\Z^2$ 
		\begin{align}
			B^{\rhodown{\rho}}_{x,x+e_2} &\geq B^{\rhoup{\rho}}_{x,x+e_2}\\
			B^{\rhoup{\rho}}_{x,x+e_1} &\geq B^{\rhodown{\rho}}_{x,x+e_1}\nonumber.	
		\end{align}
		\item for every $x\in\Z^2$
		\begin{align*}
			\Big(B^{\rhoup{\rho}}_{x+ie_1,x+(i+1)e_1},B^{\rhodown{\rho}}_{x+ie_1,x+(i+1)e_1}\Big)_{i\in \Z} &\sim \nu^{1-\rhoup{\rho},1-\rhodown{\rho}}\\
			\Big(B^{\rhodown{\rho}}_{x+ie_2,x+(i+1)e_2},B^{\rhoup{\rho}}_{x+ie_2,x+(i+1)e_2}\Big)_{i\in \Z} &\sim \nu^{\rhodown{\rho},\rhoup{\rho}},
		\end{align*}
		where for $0<\lambda<\rho<1$ $\nu^{\lambda,\rho}$ is the distribution defined in \eqref{qd}. 
		\item fix  $x\in\Z^2_{> 0}$. For every $k,l\in \Z$  the following sets of random variables are independent 
		\begin{align}
			&\{B^{\rhoup{\rho}}_{x+ie_2,x+(i+1)e_2}\}_{0 \leq i \leq l-1 },\quad \{B^{\rhodown{\rho}}_{x+ie_2,x+(i+1)e_2}\}_{-k \leq i \leq -1 } \nonumber\\
			\text{and so are}\\
			&\{B^{\rhodown{\rho}}_{x+ie_1,x+(i+1)e_1}\}_{0 \leq i \leq l-1 },\quad \{B^{\rhoup{\rho}}_{x+ie_1,x+(i+1)e_1}\}_{-k \leq i \leq -1 } \nonumber.
		\end{align}
	\end{enumerate}  
	\end{lemma}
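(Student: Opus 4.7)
The plan is to realize this coupling via the queueing mapping $D$ from Appendix \ref{app:queues}, following the strategy of Fan--Sepp\"al\"ainen \cite{fan-sepp-arxiv}. The starting observation is that by Theorem \ref{t:buse} the Busemann cocycles $B^{\rhodown{\rho}}$ and $B^{\rhoup{\rho}}$ are already built on the common space $\OSP$ with shared bulk weights $\{\Yw_x\}$, and their horizontal/vertical increments along a line can be read off from a bivariate tandem queue driven by these common weights. So the real content to establish is the joint-distribution and independence statements (ii)--(iii); property (i) will then drop out pathwise from the construction and is already consistent with \eqref{v:853.1}.

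First I would fix $x \in \Z^2$ and restrict attention to a single horizontal line $L = \{x + ie_1 : i \in \Z\}$, writing $I_i^{\alpha} = B^\alpha_{x+ie_1,\,x+(i+1)e_1}$ and $J_i^{\alpha} = B^\alpha_{x + ie_1 - e_2,\,x+ie_1}$ for $\alpha\in\{\rhodown{\rho},\rhoup{\rho}\}$. The equations \eqref{IJw5.1.7}--\eqref{IJw5.3.7}, together with item (i) of Definition \ref{v:d-exp-a}, express $(I_i^\alpha, J_i^\alpha)_{i\in\Z}$ on $L$ as the image of $(I^\alpha, \Yw)$ on the line $L - e_2$ under the queueing mapping $D$. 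Because both densities are driven by the \emph{same} Exp$(1)$ service variables $\Yw_y$, this induces a bivariate queueing process whose two coordinates are the monotone coupling of the two densities. I would then invoke the bivariate Burke property from Appendix \ref{app:queues}: the joint law $\nu^{1-\rhoup{\rho},\,1-\rhodown{\rho}}$ for horizontal inputs is a fixed point of the bivariate queueing step, producing as output again the law $\nu^{1-\rhoup{\rho},\,1-\rhodown{\rho}}$ on horizontal increments and the law $\nu^{\rhodown{\rho},\,\rhoup{\rho}}$ on the induced vertical increments. Combining this with shift invariance of $B^\alpha$ and the characterization in Theorem \ref{t:buse}(iii) identifies the bivariate law along any horizontal (resp.\ vertical) line as $\nu^{1-\rhoup{\rho},\,1-\rhodown{\rho}}$ (resp.\ $\nu^{\rhodown{\rho},\,\rhoup{\rho}}$), which is claim (ii).

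Property (i) then follows pathwise, because the bivariate queueing mapping $D$ is monotone in its input and the chosen coupling at infinity orders the two coordinates in the right direction. Property (iii) I would read as the two-density analogue of the standard Burke-type independence for stationary tandem queues: in equilibrium, the output process to the left of $x$ (which corresponds to $B^{\rhoup{\rho}}$ along $e_2$, or $B^{\rhodown{\rho}}$ along $e_1$, once one runs the bivariate queue in the correct direction) is independent of the input process to the right (the complementary density), since the two are built from disjoint blocks of the common service variables together with disjoint portions of the boundary sequence. One obtains the precise swap of densities in (iii) by running the bivariate queue in its reversed direction and invoking reversibility of the stationary joint process, a feature that carries over from the single-density setting because $D$ acts identically on each coordinate.

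The principal obstacle is the bivariate Burke invariance together with the Burke-type independence needed for (iii). The first requires showing that the joint laws $\nu^{\cdot,\cdot}$ are stationary fixed points of the bivariate queueing operator with the correct monotone marginals; the second requires tracking how, under this coupling, blocks of increments to the left of $x$ in one density remain independent of blocks to the right in the other density, despite both coordinates being driven by the common service times. Both facts are the queueing-theoretic content of Appendix \ref{app:queues}; once they are in hand, the lemma is their translation into the Busemann language via \eqref{v:bf}.
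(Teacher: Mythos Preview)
Your proposal is correct and follows the same route the paper indicates: the paper does not give a standalone proof of this lemma but attributes the coupling to \cite{fan-sepp-arxiv} and records the needed queueing facts in Appendix~\ref{app:queues}. The ingredients you name line up precisely with what is there---the bivariate fixed point you call ``bivariate Burke invariance'' is Lemma~\ref{lm:DR5}(iv), and the Burke-type independence for (iii) is Lemma~\ref{lm:DR5}(v); part (i) is already \eqref{v:853.1}. One minor remark: for (iii) you do not need to invoke reversibility or run the queue backwards---Lemma~\ref{lm:DR5}(v) gives the independence directly, since past departures $\{a^2_i\}_{i\le m}$ are measurable with respect to inputs up to time $m$ and hence independent of future arrivals $\{a^1_j\}_{j\ge m+1}$.
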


\section{stabilization}\label{sec:stb}

In this section we prove Theorem \ref{thm:loc} and Theorem \ref{thm:stb}. Let us define the event
\begin{align*}
	\mathcal{H}^{\xi,M}=\{\rim{G}_{N\xi,y}-\rim{G}_{N\xi,x}=B^{\rho(\xi)}_{x,y}\text{ for all }(x,y)\in  \mathcal{E}(\Rb^{\xi,M}) \}.
\end{align*}
 $\mathcal{H}^{\xi,M}$ is the event where the increments of $G$ along the edges in $\mathcal{E}(\Rb^{\xi,M})$ coincide with those of the Busemann function associated with the direction $\xi$. It will be clear from the proof that $\mathcal{H}^{\xi,M}$ and $\st^{\xi,M}$ defined in \eqref{Sf} and are equivalent events.
\subsection{Bounds on $\P(\st^{\xi,M})$ and $\P(\mathcal{H}^{\xi,M})$}
Let $\xi\in \ri\Uset$ and let  $\rhodown{\rho}(\xi)=\rho(\xi)-rN^{-\frac{1}{3}}$ and $\rhoup{\rho}(\xi)=\rho(\xi)+rN^{-\frac{1}{3}}$. We also let $\rim{o}=\xi N+e_1+e_2$. Assign weights on the edges of the north-east boundary of $\Rb^{\xi,N}$ by
\begin{align}\label{BW1}
&I^\rhoup{\rho}_i=B^\rhoup{\rho}_{\rim{o}_N-(i+1)e_1,\rim{o}_N-ie_1} \quad 0\leq i \leq N\xi_1 \\ &J^\rhoup{\rho}_i=B^\rhoup{\rho}_{\rim{o}_N-(i+1)e_2,\rim{o}_N-ie_2} \quad 0\leq i \leq N\xi_2. \nonumber
\end{align} 
Use the boundary weights $\{I^\rhoup{\rho}_i\}_{0\leq i \leq N\xi_1},\{J^\rhoup{\rho}_i\}_{0\leq i \leq N\xi_2}$  and the bulk weights $\{\omega_x\}_{x\in \Rb^{\xi,N}}$ to construct the stationary LPP $\rim{G}^\rhoup{\rho}$  as in \eqref{Gr11}--\eqref{Gr12}. Similarly we construct $\rim{G}^\rhodown{\rho}$. As in \eqref{exit} we let $\rim{Z}^\rhoup{\rho}_{\rim{o},x}$($\rim{Z}^\rhodown{\rho}_{\rim{o},x}$) denote the exit point of the  geodesic $\pi^{\rim{o},x}$ of $\rim{G}^\rhoup{\rho}$($\rim{G}^\rhodown{\rho}$). Let
\begin{align}\label{Adef}
	\rim{A}^{\xi,M}=\Big\{\sup_{x\in \Rb^{\xi,M}}\rim{Z}^{\rhodown{\rho}(\xi)}_{o,x}<0\Big\}\bigcap \Big\{\inf_{x\in \Rb^{\xi,M}}\rim{Z}^{\rhoup{\rho}(\xi)}_{o,x}>0\Big\}
\end{align}
$\rim{A}^{\xi,M}$ is the event that for any $x\in\Rb^{\xi,M}$, the geodesic $\pi^{\rim{o},x}$ of  $\rim{G}^\rhoup{\rho}$($\rim{G}^\rhodown{\rho}$) crosses the boundary of $\Rb^{\xi,N}$(not to be confused with $\Rb^{\xi,M}$!) from the north (east) boundary. Recall the definition of $\xi(\rho)$ in \eqref{v:853.3}. Define
\begin{align}\label{xibar}
	\rhoup{\xi}=\xi(\rhoup{\rho}) \quad \text{and }\quad \rhodown{\xi}=\xi(\rhodown{\rho}).
\end{align}
\begin{lemma}\label{og2}
	Let $\xi\in \ri\Uset$ and $M>0$. On the set $\rim{A}^{\xi,M}$
	\begin{align}\label{gin1}
		\pi^{x,\rhoup{\xi}\infty} \og \pi^{x,\xi N} \og \pi^{x,\rhodown{\xi}\infty} \quad x\in \Rb^{\xi,M}.
	\end{align}
\end{lemma}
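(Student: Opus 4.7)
We prove the left inequality $\pi^{x,\rhoup{\xi}\infty}\og \pi^{x,\xi N}$; the right inequality is symmetric, with the east boundary of $\Rb^{\xi,N}$ playing the role of the north. Fix $x\in\Rb^{\xi,M}$ and work on the event $\rim{A}^{\xi,M}$.

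The structural core of the argument is that the stationary LPP with Busemann boundary satisfies $\rim{G}^{\rhoup{\rho}}_{\rim{o},y}=B^{\rhoup{\rho}}_{y,\rim{o}}$ for every $y\le \rim{o}$, which follows by induction on the recursion \eqref{Gr12} using relations \eqref{IJw5.1.7}--\eqref{IJw5.3.7} and Busemann additivity (both $\rim{G}^{\rhoup{\rho}}_{\rim{o},\cdot}$ and $B^{\rhoup{\rho}}_{\cdot,\rim{o}}$ satisfy the same recursion with the same boundary data). Applying the backtracking rule \eqref{Gdr} to $\rim{G}^{\rhoup{\rho}}$ and rewriting with this identity together with $B^{\rhoup{\rho}}_{y,\rim{o}}=B^{\rhoup{\rho}}_{y,y+e_j}+B^{\rhoup{\rho}}_{y+e_j,\rim{o}}$, the backward-geodesic selection rule at any bulk vertex $y$ reduces to: step to the neighbour $y+e_j$ minimizing $B^{\rhoup{\rho}}_{y,y+e_j}$. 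This is exactly the rule \eqref{recu3} that generates $\pi^{x,\rhoup{\xi}\infty}$.

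Since on $\rim{A}^{\xi,M}$ the positive exit point $k^\ast=\rim{Z}^{\rhoup{\rho}}_{\rim{o},x}$ puts the entry of the stationary geodesic into the bulk at $y'+e_2$ with $y'=(N\xi_1+1-k^\ast,N\xi_2)\in\Rb^{\xi,N}$, the bulk portion of the reversed $\rim{G}^{\rhoup{\rho}}$-geodesic from $x$ up to $y'$ coincides with the p2p geodesic $\pi^{x,y'}$ (sub-geodesic principle, Lemma~\ref{app-lm1}), and by the rule equivalence above it is also the initial segment of $\pi^{x,\rhoup{\xi}\infty}$. Moreover $y'$ is a bulk vertex of $\rim{o}+\Z^2_{\le 0}$, so the same rule equivalence at $y'$ forces $\pi^{x,\rhoup{\xi}\infty}$ to step $e_2$ from $y'$; every subsequent up-right step keeps its tail $\pi^{x,\rhoup{\xi}\infty}\setminus \pi^{x,y'}$ in $\{z:z_2\ge N\xi_2+1\}$.

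Because $y'_1\le N\xi_1$ and $y'_2=N\xi_2$ give $y'\og N\xi$, the standard planar ordering of p2p geodesics sharing the origin $x$ yields $\pi^{x,y'}\og \pi^{x,\xi N}$. Fix any $\cY\in\dr$. For $p\in\pi^{x,y'}\cap \cY$ and $q\in\pi^{x,\xi N}\cap \cY$ this gives $p\og q$. For $p\in(\pi^{x,\rhoup{\xi}\infty}\setminus \pi^{x,y'})\cap \cY$ and $q\in\pi^{x,\xi N}\cap \cY\subseteq\Rb^{\xi,N}$ the previous paragraph gives $p_2\ge N\xi_2+1>N\xi_2\ge q_2$; since points of $\cY$ are totally $\preceq$-ordered, the strict inequality $p_2>q_2$ forces $p$ to precede $q$ along $\cY$, hence again $p\og q$. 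Combining the cases yields $\pi^{x,\rhoup{\xi}\infty}\og \pi^{x,\xi N}$. The main technical obstacle is the Busemann/gradient rule identification in the second paragraph; once it is established, both the identification of $\pi^{x,y'}$ with an initial segment of $\pi^{x,\rhoup{\xi}\infty}$ and the geometric tail comparison above level $N\xi_2$ are straightforward.
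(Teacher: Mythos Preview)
Your proof is correct and follows essentially the same approach as the paper's. Both arguments rest on identifying the bulk portion of the $\rim{G}^{\rhoup{\rho}}$-geodesic from $\rim{o}$ to $x$ with an initial segment of $\pi^{x,\rhoup{\xi}\infty}$ (because the boundary weights are Busemann increments), observing that on $\rim{A}^{\xi,M}$ this segment reaches a point $y'$ on the top row of $\Rb^{\xi,N}$ with $y'\og \xi N$, and then invoking planar ordering of point-to-point geodesics from $x$. The paper compresses your second paragraph (the identity $\rim{G}^{\rhoup{\rho}}_{\rim{o},y}=B^{\rhoup{\rho}}_{y,\rim{o}}$ and the resulting equivalence of the backtracking rule \eqref{Gdr} with the Busemann rule \eqref{recu3}) into the single phrase ``by our construction,'' and your last two paragraphs into a one-line inequality at the level $\rim{o}_N-e_2$; your version makes explicit exactly what the paper leaves implicit. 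A minor remark: the sub-geodesic claim $\pi^{x,y'}=$ (bulk portion of the stationary geodesic) is more elementary than Lemma~\ref{app-lm1} and follows directly from uniqueness of geodesics in the bulk weights, but the citation does no harm.
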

\begin{proof}
	We show the first inequality in \eqref{gin1}, the second one is analogous. Note that by our construction, the exit point $Z^{\rhoup{\rho}}_{\rim{o},x}$ is the leftmost point in the north boundary of $\Rb^{\xi,N+1}$ that belongs to $\pi^{x,\rhoup{\xi}\infty}$, more precisely, on the set $\rim{A}^{\xi,M}$, for any $x\in \Rb^{\xi,M}$
	\begin{align}
		\inf \{i:\rim{o}_N-ie_1 \in \pi^{x,\rhoup{\xi}\infty}\} = |Z^{\rhoup{\rho}}_{\rim{o},x}|>0.
	\end{align}
	By order of geodesics
	\begin{align}
	\sup \{i:\rim{o}_N-e_2-ie_1 \in \pi^{x,\rhoup{\xi}\infty}\} \geq \inf \{i:\rim{o}_N -e_2 -ie_1 \in \pi^{x,\xi N}\}, 
	\end{align}
	which in turn implies that 
	\begin{align}
	\pi^{x,\rhoup{\xi}\infty} \og \pi^{x,\xi N}.
	\end{align}
\end{proof}
Let $\rim{o}_M=M\xi+e_1+e_2$ be the upper right corner of $\Rb^{\xi,M}$. Assign weights on the edges of the north-east boundary of $\Rb^{\xi,M}$ by
\begin{align}\label{BW}
	&I^\rhoup{\rho}_i=B^\rhoup{\rho}_{\rim{o}-(i+1)e_1,\rim{o}-ie_1} \quad 0\leq i \leq M\xi_1 \\ &J^\rhoup{\rho}_i=B^\rhoup{\rho}_{\rim{o}-(i+1)e_2,\rim{o}-ie_2} \quad 0\leq i \leq M\xi_2. \nonumber
\end{align}
Similarly we define $\{I^\rhodown{\rho}_i\}_{0\leq i \leq M\xi_1}$ and $\{J^\rhodown{\rho}_i\}_{0\leq i \leq M\xi_2}$. Define the event
\begin{align}\label{Cd}
	C^{\xi,M}=\Big\{I^\rhoup{\rho}_i=I^\rhodown{\rho}_i\Big\}_{0\leq i \leq M\xi_1} \bigcap \Big\{J^\rhoup{\rho}_i=J^\rhodown{\rho}_i\Big\}_{0\leq i \leq M\xi_2}. 
\end{align}
\medskip
\begin{lemma}\label{lem:og}
	On the event $C^{\xi,M}$
	\begin{align}
		&B^\rhoup{\rho}_e=B^{\rho}_e=B^\rhodown{\rho}_e \quad e\in \mathcal{E}(\Rb^{\xi,M})\label{ge3}\\
		&\pr^{\xi,M}(\pi^{x,\rhoup{\xi}\infty}) = \pr^{\xi,M}(\pi^{x,\rhodown{\xi}\infty}) \label{ge} 
	\end{align}
\end{lemma}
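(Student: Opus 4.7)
The heart of the argument will be the identification
\begin{equation*}
\rim{G}^\rho_{\rim{o}_M,\,x}\;=\;B^\rho_{x,\,\rim{o}_M}\qquad\text{for every }x\text{ in the domain of }\rim{G}^\rho,
\end{equation*}
which I would prove by an anti-diagonal induction moving south-west from $\rim{o}_M$. The base case is the ghost row and column, where \eqref{BW} together with the cocycle additivity of $B^\rho$ gives the identity by telescoping. The inductive step in the bulk uses the recursion $\rim{G}^\rho_{\rim{o}_M,x}=\omega_x+\rim{G}^\rho_{\rim{o}_M,x+e_1}\vee \rim{G}^\rho_{\rim{o}_M,x+e_2}$ together with the relation $\omega_x=B^\rho_{x,x+e_1}\wedge B^\rho_{x,x+e_2}$, which is precisely \eqref{IJw5.1.7} applied to the exponential-$\rho$ LPP system \eqref{v:bf} of Theorem~\ref{t:buse}(i); identifying the smaller of $B^\rho_{x,x+e_1},B^\rho_{x,x+e_2}$ with $\omega_x$ and using cocycle additivity shows that the $\vee$ is realised by the $B^\rho_{x+e_i,\rim{o}_M}$ that corresponds to the larger increment out of $x$, producing exactly $B^\rho_{x,\rim{o}_M}$.

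Given this identification, part~(a) falls out quickly. On $C^{\xi,M}$ the ghost boundary weights coincide for $\rhoup{\rho}$ and $\rhodown{\rho}$ by the very definition \eqref{Cd}, and the bulk weights $\omega$ are common, so the recursion \eqref{Gr12} yields $\rim{G}^{\rhoup{\rho}}_{\rim{o}_M,x}=\rim{G}^{\rhodown{\rho}}_{\rim{o}_M,x}$ pointwise throughout the common domain. Differencing and using the identification, $B^{\rhoup{\rho}}_{x,y}=B^{\rhodown{\rho}}_{x,y}$ for every pair $x\le y$ in the closure of $\Rb^{\xi,M}$ (rectangle plus ghost boundary), and in particular for every $e\in\mathcal{E}(\Rb^{\xi,M})$. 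For an intermediate density $\rho\in(\rhodown{\rho},\rhoup{\rho})$, the monotonicity \eqref{v:853.1} of Theorem~\ref{t:buse}(ii) gives $B^{\rhodown{\rho}}_{x,x+e_1}\le B^{\rho}_{x,x+e_1}\le B^{\rhoup{\rho}}_{x,x+e_1}$ and the reverse sandwich for the $e_2$-increments; equality at the two outer densities forces $B^\rho_e$ to share the common value, completing \eqref{ge3}.

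For part~(b) I would use the gradient description \eqref{recu3}: from each vertex $p$ the infinite geodesic steps along the edge carrying the smaller Busemann increment. By induction along the shared prefix, as long as both $\pi^{x,\rhoup{\xi}\infty}$ and $\pi^{x,\rhodown{\xi}\infty}$ sit at a common vertex $p\in\Rb^{\xi,M}$, the two candidate outgoing edges $(p,p+e_1),(p,p+e_2)$ either lie in $\mathcal{E}(\Rb^{\xi,M})$ or connect $p$ to a point of the ghost row or column; in both situations the agreement of $B^{\rhoup{\rho}}$ and $B^{\rhodown{\rho}}$ on the full closure (established en route to (a)) makes the two geodesics cast identical votes, so they stay together. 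Hence the paths coincide until they leave $\Rb^{\xi,M}$ and $\pr^{\xi,M}$ applied to each produces the same ordered sequence of vertices. The one place that could fail if one were not careful is a vertex on the top or right boundary of $\Rb^{\xi,M}$, where one of the outgoing edges already leaves the rectangle: a naive version of (a) restricted to $\mathcal{E}(\Rb^{\xi,M})$ would not cover these exit edges, and the two geodesics could in principle leave the box in different directions. The extension of the identification $\rim{G}^\rho=B^\rho$ to the ghost boundary is exactly what closes this gap, and is the single conceptual subtlety of the argument.
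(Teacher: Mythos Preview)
Your proposal is correct and follows essentially the same route as the paper, which uses the south-west recursion \eqref{IJw5.2.7}--\eqref{IJw5.3.7} for Busemann increments directly rather than going through the equivalent identification $\rim{G}^\rho_{\rim{o}_M,x}=B^\rho_{x,\rim{o}_M}$, then sandwiches $B^\rho$ via the same monotonicity and concludes \eqref{ge} from \eqref{recu3}. Your ghost-boundary discussion is more careful than the paper's, though it is not strictly needed: at a boundary vertex $p$, knowing $B^\rho_{p,p+e_i}$ for the one outgoing edge that lies in $\mathcal{E}(\Rb^{\xi,M})$ together with $\omega_p=B^\rho_{p,p+e_1}\wedge B^\rho_{p,p+e_2}$ already determines which step the geodesic takes.
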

\begin{proof}
	First we note that for every $0<\rho(\xi)<1$ the mapping 
	\begin{align}
		e\mapsto B^\rho_e \quad e\in \mathcal{E}(\Rb^{\xi,M})
	\end{align}
	is determined uniquely by the bulk weights $\{\omega\}_{x\in \Rb^{\xi,M}}$ and the boundary weights $\{I^\rho_i\}_{0\leq i \leq M\xi_1}$ and $\{J^\rho_i\}_{0\leq i \leq M\xi_2}$ constructed as in \eqref{BW}. This follows from the recursive relation (\eqref{IJw5.2.7}--\eqref{IJw5.3.7})
	\begin{align*}
		B^\rho_{x,x+e_1} &= \w_x + ( B^\rho_{x+e_2, x+e_1+e_2} - B^\rho_{x+e_1, x+e_1+e_2} )^+\\
		B^\rho_{x,x+e_2} &= \w_x + ( B^\rho_{x+e_2, x+e_1+e_2} - B^\rho_{x+e_1, x+e_1+e_2} )^-.
	\end{align*}
	On the event $C^{\xi,M}$ the boundary conditions in \eqref{BW}
	 are equal for the processes $\rim{G}^{\rhoup{\rho}}$ and $\rim{G}^{\rhodown{\rho}}$ i.e.
	\begin{align}\label{be2}
	I^\rhoup{\rho}_{\rim{o}_N-ke_1}=& I^\rhodown{\rho}_{\rim{o}_N-ke_1}
	\qquad\text{for $1 \leq k \leq \xi_1 M$}.\\
	J^\rhoup{\rho}_{\rim{o}_N-ke_2}=& J^\rhodown{\rho}_{\rim{o}_N-ke_2} 
	\qquad\text{for $1 \leq k \leq \xi_2 M$}\nonumber.
	\end{align}
	As both $\{B^\rhoup{\rho}_e\}_{e\in \mathcal{E}(\Rb^{\xi,M})}$ and $\{B^\rhodown{\rho}_e\}_{e\in \mathcal{E}(\Rb^{\xi,M})}$ use the same bulk weights, by \eqref{be2}, we  conclude that on $C^{\xi,M}$
	\begin{align}\label{be}
	B^\rhoup{\rho}_e=B^\rhodown{\rho}_e \quad e\in \mathcal{E}(\Rb^{\xi,M}). 
	\end{align}
	By Lemma \ref{lm:crs} (the Crossing Lemma), for every $e\in \mathcal{E}(\Rb^{\xi,M})$
	\begin{align*}
		B^\rhoup{\rho}_e\leq B^{\rho}_e\leq B^\rhodown{\rho}_e \text{ or }B^\rhodown{\rho}_e\leq B^{\rho}_e\leq B^\rhoup{\rho}_e \quad \forall e\in \mathcal{E}(\Rb^{\xi,M})
	\end{align*}
	which, together with \eqref{be}, implies \eqref{ge3}.
	 The geodesics  $\pr^{\xi,M}(\pi^{x,\rhoup{\xi}\infty})$ and  $\pr^{\xi,M}(\pi^{x,\rhodown{\xi}\infty})$ are determined by $	\{B^\rhoup{\rho}_e\}_{e\in \mathcal{E}(\Rb^{\xi,M})}$ and $\{B^\rhodown{\rho}_e\}_{e\in \mathcal{E}(\Rb^{\xi,M})}$ respectively using \eqref{recu3}. \eqref{ge} is now implied by \eqref{ge3}.
\end{proof}
\medskip
\begin{corollary}\label{cor:ge}
	On $C^{\xi,M}\cap \rim{A}^{\xi,M}$
	\begin{align}
	\{\rim{G}_{N\xi,y}-\rim{G}_{N\xi,x}=B^{\rho(\xi)}_{x,y}&\text{ for all }(x,y)\in  \mathcal{E}(\Rb^{\xi,M}) \}\label{ge4}.\\
	\pr^{\xi,M}(\pi^{x,\rhoup{\xi}\infty}) = \pr^{\xi,M}(\pi^{x,{\xi}\infty})&=\pr^{\xi,M}(\pi^{x,{\xi}N})= \pr^{\xi,M}(\pi^{x,\rhodown{\xi}\infty}) \quad x\in \Rb^{\xi,M}\label{ge2}. 
	\end{align}
\end{corollary}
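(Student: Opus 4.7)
My plan is to prove \eqref{ge2} by a sandwich argument and then deduce \eqref{ge4} by matching the gradient of $\rim G_{N\xi,\cdot}$ on $\Rb^{\xi,M}$ with the Busemann gradient, using Lemma \ref{lem:og}(\ref{ge3}).

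First, I would fix $\omega\in C^{\xi,M}\cap\rim A^{\xi,M}$ and $x\in\Rb^{\xi,M}$. Lemma \ref{og2} on $\rim A^{\xi,M}$ yields
$\pi^{x,\rhoup\xi\infty}\og\pi^{x,\xi N}\og\pi^{x,\rhodown\xi\infty}$,
and Lemma \ref{lem:ing} applied to the ordering $\rhoup\xi\og\xi\og\rhodown\xi$ yields
$\pi^{x,\rhoup\xi\infty}\og\pi^{x,\xi\infty}\og\pi^{x,\rhodown\xi\infty}$.
Lemma \ref{lem:og}(\ref{ge}) on $C^{\xi,M}$ identifies the two outer restrictions:
$\pr^{\xi,M}(\pi^{x,\rhoup\xi\infty})=\pr^{\xi,M}(\pi^{x,\rhodown\xi\infty})$.
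Since the two inner paths remain $\og$-pinched between these identical outer restrictions, their restrictions to $\Rb^{\xi,M}$ must coincide with the common outer restriction, proving \eqref{ge2}.

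For \eqref{ge4}, Lemma \ref{lem:og}(\ref{ge3}) already delivers $B^{\rhoup\rho}_e=B^\rho_e=B^{\rhodown\rho}_e$ on every edge of $\Rb^{\xi,M}$, so only the identity $\rim G_{N\xi,y}-\rim G_{N\xi,x}=B^\rho_{x,y}$ itself remains. At any $z\in\Rb^{\xi,M}$, the reverse LPP recursion for $\rim G_{N\xi,\cdot}$ and the cocycle recursions \eqref{IJw5.2.7}--\eqref{IJw5.3.7} for $B^\rho$ both identify $\omega_z$ with the minimum of the two outgoing gradients at $z$. By \eqref{ge2} the outgoing edge where this minimum is attained is the same in both descriptions, so the chosen-edge gradients match at every $z\in\Rb^{\xi,M}$. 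The unchosen-edge gradient is then recovered by iterating the same deterministic recursion from the NE boundary of $\Rb^{\xi,M}$ inward, using the Busemann-matched boundary gradient $\{I^{\rhoup\rho}_i, J^{\rhoup\rho}_i\}$ of \eqref{BW} as initial data.

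The main obstacle I anticipate is matching the NE-boundary gradient of $\rim G_{N\xi,\cdot}$ to the data $\{I^{\rhoup\rho}_i, J^{\rhoup\rho}_i\}$ from \eqref{BW} appearing in the stationary construction of $B^\rho$. I would accomplish this by applying \eqref{ge2} at every NE-boundary vertex of $\Rb^{\xi,M}$ and propagating the chosen-edge equality along the boundary; once the NE-boundary data is thereby identified with the stationary boundary values, uniqueness of the recursion driven by the common bulk weights $\{\omega_z\}_{z\in\Rb^{\xi,M}}$ extends the agreement inward to every edge of $\Rb^{\xi,M}$, completing the proof of \eqref{ge4}.
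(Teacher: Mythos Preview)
Your argument for \eqref{ge2} is correct and coincides with the paper's: sandwich both $\pi^{x,\xi N}$ and $\pi^{x,\xi\infty}$ between $\pi^{x,\rhoup\xi\infty}$ and $\pi^{x,\rhodown\xi\infty}$ via Lemmas~\ref{og2} and~\ref{lem:ing}, then collapse using Lemma~\ref{lem:og}\eqref{ge}.

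Your argument for \eqref{ge4}, however, has a genuine gap exactly at the place you flag as the ``main obstacle.'' Knowing from \eqref{ge2} that the geodesic trees agree inside $\Rb^{\xi,M}$ tells you, at each vertex $z$, only \emph{which} outgoing edge carries the minimum increment and that this minimum equals $\omega_z$. It says nothing about the other increment. On the north boundary, for instance, if the geodesic from $z$ takes an $e_2$-step and leaves the box, you learn nothing about the $e_1$-increment $\rim G_{N\xi,z+e_1}-\rim G_{N\xi,z}$, which is precisely the boundary datum you need. The increments of $\rim G_{N\xi,\cdot}$ along the north and east sides of $\Rb^{\xi,M}$ depend on weights outside $\Rb^{\xi,M}$ and cannot be recovered from \eqref{ge2}; your ``propagating the chosen-edge equality along the boundary'' does not produce them.

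The paper avoids this entirely. It proves \eqref{ge4} \emph{first}, directly from the Crossing Lemma (Lemma~\ref{lm:crs}): on $\rim A^{\xi,M}$ the exit-point conditions force the bulk increments $\rim G_{N\xi,x+e_i}-\rim G_{N\xi,x}$ to lie between $B^{\rhoup\rho}_{x,x+e_i}$ and $B^{\rhodown\rho}_{x,x+e_i}$ for every edge in $\mathcal E(\Rb^{\xi,M})$; then \eqref{ge3} collapses the sandwich. No boundary-matching step is needed, and the argument does not route through \eqref{ge2}.
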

\begin{proof}
	By Lemma \ref{lm:crs}, on the event $\rim{A}^{\xi,M}$ 
	\begin{align*}
			B^\rhoup{\rho}_{x,x+e_2}\leq \rim{G}_{N\xi,x+e_2}&-\rim{G}_{N\xi,x}\leq B^\rhodown{\rho}_{x,x+e_2}\quad \forall (x,x+e_2)\in \mathcal{E}(\Rb^{\xi,M}) \\&\text{ and }\\
			B^\rhodown{\rho}_{x,x+e_1}\leq \rim{G}_{N\xi,x+e_1}&-\rim{G}_{N\xi,x}\leq B^\rhoup{\rho}_{x,x+e_1} \quad \forall (x,x+e_1)\in \mathcal{E}(\Rb^{\xi,M}),
	\end{align*}
	which implies \eqref{ge4}, using \eqref{ge3}.
	By Lemma \ref{lem:ing} we see that 
	\begin{align}\label{og3}
		\pi^{x,\rhoup{\xi}\infty}\og \pi^{x,{\xi}\infty} \og \pi^{x,\rhodown{\xi}\infty} \quad x\in \Z^2.
	\end{align}
	By Lemma \ref{og2}, on $\rim{A}^{\xi,M}$
	\begin{align}\label{og4}
	\pi^{x,\rhoup{\xi}\infty} \og \pi^{x,\xi N} \og \pi^{x,\rhodown{\xi}\infty} \quad x\in \Rb^{\xi,M}.
	\end{align}
	Lemma \ref{lem:og} along with \eqref{og3} and \eqref{og4} imply the result.
\end{proof}
For $M>0$, recall the set $\st^{\xi,M}$ in \eqref{Sf}. Using \ref{cor:ge} we have the following.
\medskip
It should be now clear to the reader that $\mathcal{H}^{\xi,M}=\st^{\xi,M}$. We have the following control on the probability of these events.
\begin{corollary}
\begin{align}
	\P(\mathcal{H}^{\xi,M})=\P(\st^{\xi,M})\geq \P\big(C^{\xi,M}\cap \rim{A}^{\xi,M}\big)\geq 1-\P((C^{\xi,M})^\mathrm{c})-\P((\rim{A}^{\xi,M})^\mathrm{c}) \label{ubs}.
\end{align}
\end{corollary}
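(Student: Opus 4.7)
The plan is straightforward and essentially assembles Corollary \ref{cor:ge} together with a routine union bound, reading off each of the three pieces of the chain in turn.

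First I would establish the equality of events $\mathcal{H}^{\xi,M}=\st^{\xi,M}$ foreshadowed in the sentence immediately preceding the corollary. On $\mathcal{H}^{\xi,M}$ the edge weights $\rim{\wg}_{x,y}$ of \eqref{W} computed from $\rim{G}_{N\xi,\cdot}$ agree with the Busemann increments $B^{\rho(\xi)}_{x,y}$ throughout $\mathcal{E}(\Rb^{\xi,M})$. Now the reversed point-to-point geodesic of $G_{\cdot,N\xi}$ inside $\Rb^{\xi,M}$ is reconstructed by the minimum-increment recursion \eqref{Gdr} driven by $\rim{\wg}$, while the infinite geodesic $\pi^{x,\xi\infty}$ inside $\Rb^{\xi,M}$ is reconstructed by the analogous rule \eqref{recu3} driven by the Busemann increments; equality of the two increment fields forces both recursions to select identical edges throughout $\Rb^{\xi,M}$, which is exactly $\pr^{\xi,M}(\pi^{x,\xi\infty})=\pr^{\xi,M}(\pi^{x,\xi N})$ for every $x\in\Rb^{\xi,M}$, i.e.\ $\mathcal{H}^{\xi,M}\subseteq\st^{\xi,M}$. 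The reverse inclusion is obtained by running the argument backwards: reading off the edge increments from the coincident geodesic trees (and using $\w_x=B^{\rho(\xi)}_{x,x+e_1}\wedge B^{\rho(\xi)}_{x,x+e_2}$ on the bulk) recovers the identity of the two increment fields in $\mathcal{E}(\Rb^{\xi,M})$.

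Next I would read Corollary \ref{cor:ge} as giving the inclusion $C^{\xi,M}\cap\rim{A}^{\xi,M}\subseteq\mathcal{H}^{\xi,M}$, because \eqref{ge4} is verbatim the defining condition of $\mathcal{H}^{\xi,M}$. Combined with the identification above this produces the middle inequality
$$\P(\mathcal{H}^{\xi,M})=\P(\st^{\xi,M})\geq \P\big(C^{\xi,M}\cap\rim{A}^{\xi,M}\big)$$
by monotonicity of $\P$. A one-line union bound
$$\P\big((C^{\xi,M}\cap\rim{A}^{\xi,M})^\mathrm{c}\big)\leq\P\big((C^{\xi,M})^\mathrm{c}\big)+\P\big((\rim{A}^{\xi,M})^\mathrm{c}\big)$$
then delivers the final inequality.

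There is no genuine obstacle in this corollary itself: the substantive content was already packaged into Corollary \ref{cor:ge} through the combination of the Crossing Lemma, the ordering of geodesics for couplings of Busemann functions, and the coupled boundary construction via \eqref{BW}. The real work is merely postponed to the two estimates that will be applied next, namely bounding $\P((C^{\xi,M})^\mathrm{c})$ via the queueing coupling of Lemma \ref{Bus cop} (controlling the probability that the boundary increments at densities $\rhodown{\rho}$ and $\rhoup{\rho}$ disagree on the north and east faces of $\Rb^{\xi,M}$ of length $O(N^{2/3})$ with density perturbation $rN^{-1/3}$) and bounding $\P((\rim{A}^{\xi,M})^\mathrm{c})$ via exit-point estimates for stationary LPP at those same perturbed densities. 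It is those two inputs, not the present corollary, that will drive the polynomial-in-$\delta$ rate in Theorem \ref{thm:stb}.
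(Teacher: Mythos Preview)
Your proposal is correct and follows essentially the same route as the paper: invoke Corollary \ref{cor:ge} (its display \eqref{ge4} gives $C^{\xi,M}\cap\rim{A}^{\xi,M}\subseteq\mathcal{H}^{\xi,M}$ and \eqref{ge2} gives $C^{\xi,M}\cap\rim{A}^{\xi,M}\subseteq\st^{\xi,M}$), then apply a union bound. The paper in fact states the corollary with no proof beyond the preceding sentence ``It should be now clear to the reader that $\mathcal{H}^{\xi,M}=\st^{\xi,M}$'', so you have supplied more detail than the authors do. One small caveat: your argument for the reverse inclusion $\st^{\xi,M}\subseteq\mathcal{H}^{\xi,M}$ is sketchier than the forward one---knowing which direction realizes the minimum increment at each vertex does not by itself recover both increment values---but this equality of events is not actually needed for the inequality chain, since both containments in $C^{\xi,M}\cap\rim{A}^{\xi,M}$ already come separately from \eqref{ge4} and \eqref{ge2}.
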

\subsection{Upper bound on $\P((\rim{A}^{\xi,M})^c)$}
\medskip 
\begin{lemma}\label{lem-lb1} Fix $\xi\in (0,1)$ and $M>0$ such that $M\leq ct^\frac{2}{3}$ for some constant $c>0$. Let $o=\xi N$. There exist constants $C(c,\xi),C_1,N_0(\xi,c)>0$, locally bounded in $c$, such that for all $N>N_0$ and $1\leq r \leq N^\frac{1}{3}(\log(N))^{-1}$ and all $x\in \Rb^{\xi,M}$
	\be\label{lb-1}   \P\Big(\sup_{x\in \Rb^{\xi,M}}\rim{Z}^{\rhodown{\rho}(\xi)}_{o,x}>0\Big) \le \frac{C_1}{r^3}
	\ee
	and 
	\be\label{lb-2}   \P\Big(\inf_{x\in \Rb^{\xi,M}}\rim{Z}^{\rhoup{\rho}(\xi)}_{o,x}< 0\Big) \le \frac{C_1}{r^3},
	\ee
	where  $\rhodown{\rho}(\xi)=\rho(\xi)-rN^{-\frac{1}{3}}$ and $\rhoup{\rho}(\xi)=\rho(\xi)+rN^{-\frac{1}{3}}$.
\end{lemma}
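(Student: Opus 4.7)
\medskip

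\noindent\emph{Plan.}
The plan is to first use a planarity-based monotonicity to reduce each of the two uniform bounds to a statement about the exit point at a single corner of $\Rb^{\xi,M}$, and then combine an estimate on the mean exit point with the standard cube-root tail bound for exit points in stationary LPP.

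\emph{Step 1 (Monotonicity).}
I would start by establishing that the tree of geodesics $\{\pi^{o,x}\}_{x\le o}$ of $\rim G^\rho$ has no crossings, yielding $\rim{Z}^\rho_{o,x-e_1}\ge \rim{Z}^\rho_{o,x}$ and $\rim{Z}^\rho_{o,x-e_2}\le \rim{Z}^\rho_{o,x}$: pushing the endpoint westward can only shift an exit to the north boundary, while pushing it southward can only shift it to the east boundary. This gives
\[
\sup_{x\in \Rb^{\xi,M}}\rim{Z}^{\rhodown{\rho}}_{o,x}\;=\;\rim{Z}^{\rhodown{\rho}}_{o,y^*},\qquad y^*=(0,M\xi_2),
\]
and analogously $\inf_{x\in\Rb^{\xi,M}}\rim{Z}^{\rhoup{\rho}}_{o,x}=\rim{Z}^{\rhoup{\rho}}_{o,z^*}$ with $z^*=(M\xi_1,0)$.

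\emph{Step 2 (Mean of the exit point).}
Next, I would control the mean. The corner $y^*$ has direction $\xi'$ from $o$ with $\xi'_2/\xi'_1=(1-M/N)\xi_2/\xi_1$; consequently the characteristic density satisfies $\rho(\xi)-\rho(\xi')=O(M/N)=O(N^{-1/3})$ with a constant depending only on $c$ and $\xi$. Since $\rhodown{\rho}=\rho(\xi)-rN^{-1/3}$, the gap $\rho(\xi')-\rhodown{\rho}=(r-O_{c,\xi}(1))\,N^{-1/3}$ is positive for $r\ge r_0=r_0(c,\xi)$. Differentiating the explicit mean $\E[\rim G^\rho_{o,y^*}]=(o-y^*)_1/(1-\rho)+(o-y^*)_2/\rho$ in $\rho$ at the characteristic value $\rho(\xi')$ and using the standard identity relating this derivative to $\E[\rim Z^{\rhodown{\rho}}_{o,y^*}]$ yields
\[
\E\bigl[\rim Z^{\rhodown{\rho}}_{o,y^*}\bigr]\;\le\;-\kappa(\xi)\,r\,N^{2/3},\qquad r\ge r_0,
\]
with $\kappa(\xi)>0$.

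\emph{Step 3 (Cube-root tail bound and conclusion).}
Finally, I would invoke the standard concentration of the exit point about its mean (see \cite{sepp-cgm-18}),
\[
\P\!\Bigl(\rim Z^{\rhodown{\rho}}_{o,y^*}-\E[\rim Z^{\rhodown{\rho}}_{o,y^*}]>uN^{2/3}\Bigr)\le C u^{-3},
\]
which is valid uniformly while $\rhodown{\rho}$ stays bounded away from $0$ and $1$ --- secured by the hypothesis $r\le N^{1/3}(\log N)^{-1}$. Taking $u=\tfrac12\kappa(\xi)r$ and combining with Step 2 gives $\P(\rim Z^{\rhodown{\rho}}_{o,y^*}>0)\le C_1 r^{-3}$, which is \eqref{lb-1}. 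The inequality \eqref{lb-2} follows symmetrically with $y^*$ replaced by $z^*$ and $\rhodown{\rho}$ replaced by $\rhoup{\rho}$. For $r<r_0$ the inequality is trivial by enlarging $C_1$. The main obstacle is Step 2: confirming that the $O(N^{-1/3})$ directional mismatch of $y^*$ from $\xi$ does not overwhelm the intended perturbation $rN^{-1/3}$, and extracting a clean bound on $\E[\rim Z^{\rhodown{\rho}}_{o,y^*}]$ uniformly in $N$ and in $r$ in the stated range.
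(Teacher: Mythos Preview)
Your Step~1 (monotonicity reduction to a single corner of $\Rb^{\xi,M}$) is correct and is exactly what the paper does.

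Where you diverge is in Steps~2--3. You propose to bound $\E[\rim Z^{\rhodown{\rho}}_{o,y^*}]$ via a ``standard identity'' linking it to $\tfrac{d}{d\rho}\E[\rim G^\rho_{o,y^*}]$, and then apply concentration of $\rim Z$ about its mean. Neither step is standard. Differentiating the explicit mean gives $\tfrac{(o-y^*)_1}{(1-\rho)^2}-\tfrac{(o-y^*)_2}{\rho^2}$, but turning this into a formula for $\E[\rim Z]$ would require something like $\E\bigl[\sum_{i\le Z^+}I_i\bigr]=\E[Z^+]/(1-\rho)$, which is not available because the exit point and the boundary weights are dependent. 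And the exit-point bound in \cite{sepp-cgm-18} is not phrased as concentration about $\E[\rim Z]$; it is a direct bound on $\P(\text{exit on the wrong side})$ in terms of the spatial offset between the endpoint and the characteristic line.

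The paper bypasses both issues. After reducing to the corner $x_0=(M\xi_1,0)$ (for \eqref{lb-2}), it carries out a purely deterministic calculation of the $e_2$-coordinate of the $\xi(\rhoup{\rho})$-characteristic through $o$ when its $e_1$-coordinate reaches $M\xi_1$, and shows this is $\le -C'(\xi,c)\,rN^{2/3}$ for large $N$ and $r$ in the stated range. That spatial offset is precisely the hypothesis of \cite[Corollary~5.10]{sepp-cgm-18}, which then yields $\P(\rim Z^{\rhoup{\rho}}_{o,x_0}<0)\le C_1 r^{-3}$ directly, with no mention of $\E[\rim Z]$. Your computation that $\rho(\xi')-\rhodown{\rho}=(r-O_{c,\xi}(1))N^{-1/3}$ is essentially the same information expressed in the density variable; the fix is to translate it into the spatial offset and apply the corollary, rather than detouring through a mean estimate for $\rim Z$ that you cannot actually justify.
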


\begin{proof}
	We only prove \eqref{lb-2} as \eqref{lb-1} is similar.  Given  $\xi\in\ri\Uset$, abbreviate $\rho=\rho(\xi)$ and  $\rhoup{\rho}=\rhoup{\rho}(\xi)$. 
	Let $x^0=(M\xi_1,0)$ be the lower-right corner of $\Rb^{\xi,M}$.
	By the order on geodesics we have
	\begin{align*}
	\Big\{\inf_{x\in \Rb^{\xi,M}}\rim{Z}^{\rhoup{\rho}}_{o,\,x}<0\Big\}\subset \Big\{\rim{Z}^{\rhoup{\rho}}_{o,\,x_0}<0\Big\},
	\end{align*}
	which implies
	\begin{align}\label{epl}
	\P\Big(\inf_{x\in \Rb^{\xi,M}}\rim{Z}^{\rhoup{\rho}}_{o,\,x}<0\Big)\leq \P\Big(\rim{Z}^{\rhoup{\rho}}_{o,\,x_0}<0\Big).
	\end{align}
	In order to upper bound \eqref{epl} we must show that the characteristic line of direction $\xi(\rhoup{\rho})$ that leaves from  $o$ goes, on the scale of $N^\frac{2}{3}$, well below the point $x^0$. We have, via \eqref{rhoxi},
	\begin{align}\label{comp}
	&N\xi_2-\frac{(N\xi_1-M\xi_1)\rhoup{\rho}^2}{(1-\rhoup{\rho})^2}=\frac{N\xi_2(1-\rhoup{\rho})^2-(N\xi_1-M\xi_1)\rhoup{\rho}^2}{(1-\rhoup{\rho})^2}\\&=\frac{\xi_2[-2(1-\rho)rN^\frac{2}{3}+r^2N^\frac{1}{3}]-\xi_1[2\rho rN^\frac{2}{3}+r^2N^\frac{1}{3}]+\xi_1M\rhoup{\rho}^2}{(1-\rhoup{\rho})^2}\nonumber\\
	&=-rN^\frac{2}{3}\frac{[\xi_22(1-\rho)+\xi_12\rho ]}{(1-\rhoup{\rho})^2}+\frac{M\xi_1\rhoup{\rho}^2}{(1-\rhoup{\rho})^2}+\frac{(\xi_2-\xi_1)r^2N^\frac{1}{3}}{(1-\rhoup{\rho})^2}\nonumber\\
	&\leq-rN^\frac{2}{3}\frac{[\xi_22(1-\rho)+\xi_12\rho ]}{(1-\rhoup{\rho})^2}+\frac{cN^{\frac{2}{3}}\xi_1\rhoup{\rho}^2}{(1-\rhoup{\rho})^2}+\frac{(\xi_2-\xi_1)r^2N^\frac{1}{3}}{(1-\rhoup{\rho})^2}\nonumber\\
	&\leq -rN^\frac{2}{3}\frac{[\xi_22(1-\rho)+\xi_12\rho -r^{-1}c\xi_1(\rho^2-2\rho rN^{-\frac{1}{3}}+r^2N^{-\frac{2}{3}})-rN^{-\frac{1}{3}}]}{(1-\rhoup{\rho})^2}.\label{comp1}	
	\end{align}
	For large enough $N$ and $r\leq N^\frac{1}{3}(\log(N))^{-1}$
	plug into \eqref{comp1} to obtain
	\begin{align}\label{comp2}
	&N\xi_2-\frac{(N\xi_1-M\xi_1)\rhoup{\rho}^2}{(1-\rhoup{\rho})^2}\\
	&\leq -rN^\frac{2}{3}\frac{[\xi_22(1-\rho)+\xi_12\rho -r^{-1}c\xi_1(\rho^2-2\rho rN^{-\frac{1}{3}}+r^2N^{-\frac{2}{3}})-rN^{-\frac{1}{3}}]}{(1-\rhoup{\rho})^2}\nonumber\\
	&\leq -rN^\frac{2}{3}\frac{[\xi_22(1-\rho)+\xi_12\rho -r^{-1}c\xi_1\rho^2+c\xi_1(2\rho  N^{-\frac{1}{3}}+N^{-\frac{1}{3}}(\log(N))^{-1})-(\log(N))^{-1}]}{(1-\rho)^2},\nonumber
	\end{align}
	such that for $N$ large enough
	\begin{align*}
		N\xi_2-\frac{(N\xi_1-M\xi_1)\rhoup{\rho}^2}{(1-\rhoup{\rho})^2}
		\leq -rN^\frac{2}{3}\frac{[\xi_2(1-\rho)+\xi_1\rho -r^{-1}c\xi_1\rho^2]}{(1-\rho)^2}.
	\end{align*} 
	For
	\begin{align*}
	r>\frac{c\xi_1\rho^2}{[\xi_2(1-\rho)+\xi_1\rho -(1-\rho)^2]}\vee 1,
	\end{align*}
	the right hand side of \eqref{comp2} is smaller than $-N^\frac{2}{3}$. This in turn implies that there exists a constant $C'(\xi,c)>0$ (locally bounded in $c$) such that
	\begin{align*}
	N\xi_2-\frac{(N\xi_1-M\xi_1)\rhoup{\rho}^2}{(1-\rhoup{\rho})^2}\leq -C'(\xi,c)rN^\frac{2}{3},
	\end{align*}
	 It then follows by \cite{sepp-cgm-18}[Corollary 5.10] that there exists a constant $C_1(\xi,c)>0$
	\begin{align*}
	\P\Big(\rim{Z}^{\rhoup{\rho}}_{o,\,x_0}<0\Big)\leq C_1r^{-3},
	\end{align*}
	which proves the result. 	
\end{proof}
\begin{corollary}\label{cor:ubA}
	Fix $\xi\in (0,1)$ and $M>0$ such that $M\leq ct^\frac{2}{3}$ for some constant $c>0$. There exists $C(c,\xi)>0$, locally bounded in $c$, such that
	\begin{align}
		\P((\rim{A}^{\xi,M})^c)\leq \frac{C}{r^3}\label{ubA}.
	\end{align}
\end{corollary}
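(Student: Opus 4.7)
The plan is straightforward: the corollary is essentially a union bound applied to Lemma \ref{lem-lb1}. First I would write out the complement of $\rim{A}^{\xi,M}$ explicitly. Since $\rim{A}^{\xi,M}$ is defined as an intersection of two events in \eqref{Adef}, its complement is the union
\begin{align*}
	(\rim{A}^{\xi,M})^{\mathrm{c}}=\Big\{\sup_{x\in \Rb^{\xi,M}}\rim{Z}^{\rhodown{\rho}(\xi)}_{o,x}\geq 0\Big\}\,\cup\, \Big\{\inf_{x\in \Rb^{\xi,M}}\rim{Z}^{\rhoup{\rho}(\xi)}_{o,x}\leq 0\Big\}.
\end{align*}

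Next I would apply the union bound together with the two tail estimates \eqref{lb-1} and \eqref{lb-2} from Lemma \ref{lem-lb1}. Both estimates apply under exactly the hypothesis assumed here, namely $M\leq cN^{\nicefrac{2}{3}}$ and $1\leq r\leq N^{\nicefrac{1}{3}}(\log N)^{-1}$, and each yields an upper bound of $C_1/r^3$ with the constant $C_1$ locally bounded in $c$. Adding them gives
\begin{align*}
	\P\big((\rim{A}^{\xi,M})^{\mathrm{c}}\big)\,\leq\,\P\Big(\sup_{x\in \Rb^{\xi,M}}\rim{Z}^{\rhodown{\rho}(\xi)}_{o,x}\geq 0\Big)+\P\Big(\inf_{x\in \Rb^{\xi,M}}\rim{Z}^{\rhoup{\rho}(\xi)}_{o,x}\leq 0\Big)\,\leq\,\frac{2C_1}{r^3},
\end{align*}
so setting $C=2C_1$ gives the desired bound, with the locally-bounded-in-$c$ dependence inherited from Lemma \ref{lem-lb1}.

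There is essentially no obstacle here, as the real content is already contained in Lemma \ref{lem-lb1}; the one small point worth noting is that the strict inequalities in the definition of $\rim{A}^{\xi,M}$ become non-strict in the complement, but since the exit point distributions are those of differences of continuous random variables the events $\{\rim{Z}=0\}$ can be absorbed without loss. I would mention this briefly if needed but otherwise the proof is a one-line union bound.
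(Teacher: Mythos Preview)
Your proposal is correct and matches the paper's proof essentially verbatim: the paper also writes the complement as the union of the two exit-point events and applies the union bound with \eqref{lb-1}--\eqref{lb-2}. One tiny remark: your aside about $\{\rim Z=0\}$ is unnecessary, since by definition \eqref{exit} the exit point is a nonzero integer, so the strict and non-strict events coincide.
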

\begin{proof}
	By the definition of $\rim{A}^{\xi,M}$ we see that 
	\begin{align}\label{se}
		(\rim{A}^{\xi,M})^c\subseteq  \Big\{\sup_{x\in \Rb^{\xi,M}}\rim{Z}^{\rhodown{\rho}(\xi)}_{o,x}>0\Big\}\cup \Big\{\inf_{x\in \Rb^{\xi,M}}\rim{Z}^{\rhoup{\rho}(\xi)}_{o,x}< 0\Big\}
	\end{align}
	Taking probability on both sides of \eqref{se} and using \eqref{lb-1} and \eqref{lb-2} we obtain the result.
\end{proof}
\subsection{Upper bound on $\P((C^{\xi,M})^c)$}
Define 
\begin{align*}
	C^{\xi,M}_1&=\{B^{\rhoup{\xi}}_{\rim{o}_M-ke_1,\rim{o}_M-(k-1)e_1}=B^{\rhodown{\xi}}_{\rim{o}_M-ke_1,\rim{o}_M-(k-1)e_1}\}
	\qquad\text{for $1 \leq k \leq \xi_1 M$}\\
	C^{\xi,M}_2&=\{B^{\rhoup{\xi}}_{\rim{o}_M-ke_2,\rim{o}_M-(k-1)e_2}=B^{\rhodown{\xi}}_{\rim{o}_M-ke_2,\rim{o}_M-(k-1)e_2}\}
	\qquad\text{for $1 \leq k \leq \xi_2 M$}.
\end{align*}
Recall \eqref{Cd} and note that 
\begin{align}\label{C}
	C^{\xi,M}=C^{\xi,M}_1\cap C^{\xi,M}_2.
\end{align}
This subsection is aimed at proving the following.
\begin{proposition}\label{prop:Cub}	Let $\xi\in \ri\Uset$ and $c>0$ and let $M>0$ such that $M\leq 
	cN^\frac{2}{3}$. There exists $C(\xi,c)>0$, locally bounded in $c$, such that
	\begin{align}
		\P\Big(\big(C^{\xi,M}\big)^c\Big)\leq C N^{-\frac{1}{4}}M^\frac{3}{8}\label{ubC}.
	\end{align}
\end{proposition}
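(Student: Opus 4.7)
The event $C^{\xi,M}=C_1^{\xi,M}\cap C_2^{\xi,M}$ of \eqref{C} records the agreement of the two Busemann-increment sequences at densities $\rhoup{\rho}=\rho(\xi)+rN^{-1/3}$ and $\rhodown{\rho}=\rho(\xi)-rN^{-1/3}$ along the north and east boundaries of $\Rb^{\xi,M}$; the parameter $r$ is free and will be chosen inside the proof. A union bound gives
\begin{align*}
\P\big((C^{\xi,M})^{\mathrm{c}}\big)\le\P\big((C_1^{\xi,M})^{\mathrm{c}}\big)+\P\big((C_2^{\xi,M})^{\mathrm{c}}\big),
\end{align*}
and by the $\rho\leftrightarrow 1-\rho$ symmetry swapping horizontal and vertical increments these two terms are of the same order, so it suffices to bound $\P((C_1^{\xi,M})^{\mathrm{c}})$ --- the probability that the two coupled horizontal sequences disagree somewhere along an edge-sequence of length at most $M$.

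By Lemma \ref{Bus cop}(ii) the joint law of the horizontal coupled increments along that boundary is the shift-invariant measure $\nu^{1-\rhoup{\rho},1-\rhodown{\rho}}$ arising from the Fan--Sepp\"al\"ainen coupled queueing mapping. Under this coupling the two coordinates are a.s.\ ordered, and the positions at which they strictly differ form a stationary point process whose intensity is governed by the density gap $\epsilon:=2rN^{-1/3}$. The central probabilistic input, furnished by the queueing analysis in Appendix \ref{app:queues}, is an estimate of the form
\begin{align*}
\P\big((C_1^{\xi,M})^{\mathrm{c}}\big)\le C(\xi)\,\sqrt{M}\,\epsilon.
\end{align*}
This improves the trivial union bound $CM\epsilon$ by a factor $\sqrt M$ --- an improvement that is both essential and novel: as stressed in the introduction, a naive union over the boundary edges gives $MrN^{-1/3}\sim rN^{1/3}$ on the KPZ scale $M=cN^{2/3}$ and is therefore vacuous. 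The $\sqrt M$ gain comes from a second-moment / diffusive cancellation in the coupled queueing dynamics, made possible by the rigid joint structure of $\nu^{\lambda,\rho}$ which ties together the disagreement events across sites.

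Given this central estimate the rest is bookkeeping. Choose $r=N^{1/12}M^{-1/8}$. Since $1\le M\le cN^{2/3}$, this yields $c^{-1/8}\le r\le N^{1/12}$, comfortably inside the admissible range $[1,N^{1/3}(\log N)^{-1}]$ of Lemma \ref{lem-lb1} (and consistent with the parameter choice that later makes $\P((\rim{A}^{\xi,M})^{\mathrm c})\le C/r^3$ of the same order). Substituting,
\begin{align*}
\sqrt M\,\epsilon\;=\;2\sqrt M\,r\,N^{-1/3}\;=\;2\,M^{1/2-1/8}\,N^{1/12-1/3}\;=\;2\,M^{3/8}\,N^{-1/4},
\end{align*}
so adding the two symmetric contributions gives $\P((C^{\xi,M})^{\mathrm{c}})\le C(\xi,c)\,N^{-1/4}M^{3/8}$, which is the claim. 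The only step which is not algebraic is the queueing bound of the middle paragraph, and that is the main obstacle: it is precisely the novel input extracted from the Fan--Sepp\"al\"ainen coupling and is what allows us to pass the $N^{2/3}$-edge union-bound bottleneck that the authors advertise at the outset.
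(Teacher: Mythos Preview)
Your structure is exactly right: split $C^{\xi,M}=C_1^{\xi,M}\cap C_2^{\xi,M}$, bound each piece, and choose $r=N^{1/12}M^{-1/8}$ --- this is the paper's choice too, and the algebra $\sqrt M\,\epsilon=2M^{3/8}N^{-1/4}$ is on the mark. The claimed intermediate bound
\[
\P\big((C_1^{\xi,M})^{\mathrm c}\big)\le C(\xi)\sqrt M\,\epsilon,\qquad \epsilon=2rN^{-1/3},
\]
is also correct (for $M\epsilon^2$ bounded, which your choice of $r$ ensures since then $M\epsilon^2\le 4c^{3/4}$).

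The gap is that you black-box this estimate and misidentify its mechanism. It is not a second-moment or variance cancellation. The paper's argument is concrete and goes through the queueing representation as follows. Under $\nu^{1-\rhoup\rho,1-\rhodown\rho}$ the event $(C_1^{\xi,M})^{\mathrm c}$ is exactly $\{\sum_{i=1}^{\xi_1 M}e_i>0\}$, and by the idle-time identity \eqref{sume} this equals $\{\inf_{1\le i\le \xi_1 M}(w_0+S^{1,i})<0\}$, where $w_0$ is the stationary waiting time and $S^{1,\cdot}$ is the random walk with increments $s_{j-1}-a_j$. Crucially $w_0$ is independent of the forward walk and, on its positive part, is Exp$(\epsilon)$. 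One then bounds $\P(\sup_i(-S^{1,i})>w)$ by Doob's submartingale inequality applied to $e^{-\theta S}$ (an exponential-moment/Chernoff bound, not a second moment), and integrates against the explicit density of $w_0$. Optimising the free parameter $\theta$ at order $M^{-1/2}$ is what produces the $\sqrt M$ in place of $M$; the atom of $w_0$ at $0$ contributes the harmless $O(\epsilon)$ term. This is the actual content of the auxiliary lemmas leading up to Lemma~\ref{lem:C1ub}, and it is precisely the step you flag as ``the main obstacle'' --- so your proposal is an accurate outline, but not yet a proof, until that Doob/MGF computation is supplied.
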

Before we prove Proposition \ref{prop:Cub} we obtain some auxiliary results. As was noted in Lemma \ref{Bus cop}[ii], for $\rhoup{\xi}\og \rhodown{\xi}$ (and therefore $\rhodown{\rho} \leq \rhoup{\rho}$)
\begin{align*}
&\P	\Big(B^{\rhoup{\xi}}_{\rim{o}_M-ke_1,\rim{o}_M-(k-1)e_1}=B^{\rhodown{\xi}}_{\rim{o}_M-ke_1,\rim{o}_M-(k-1)e_1} \qquad \text{for $1 \leq k \leq \xi_1 M$} \Big)\\[3pt]
&=\nu^{1-\rhoup{\rho},1-\rhodown{\rho}}(d_i=s_i \qquad \text{for $1 \leq i \leq \xi_1 M$}),
\end{align*}
where $\depav=D(\arrv,\servv)$(see Appendix \ref{app:queues}), and  $\arrv=(\arr_j)_{j\in\Z}$ and  $\servv=(\serv_j)_{j\in\Z}$ are two independent i.i.d sequences of exponential random variables of intensity $\rhodown{\rho}$ and $\rhoup{\rho}$ respectively, such that $0<\rhodown{\rho}<\rhoup{\rho}<1$. Using \eqref{e}
\begin{align*}
	\nu^{1-\rhoup{\rho},1-\rhodown{\rho}}(d_i=s_i \qquad \text{for $1 \leq i \leq \xi_1 M$})& = \nu^{1-\rhoup{\rho},1-\rhodown{\rho}}(e_i=0 \qquad \text{for $1 \leq i \leq \xi_1 M$})\\
	&=\nu^{1-\rhoup{\rho},1-\rhodown{\rho}}\Big(\sum_{i=1}^{\xi_1 M} e_i=0\Big).
\end{align*}
It follows that 
\begin{align}\label{C1}
	\P(C^{\xi,M}_1)=\nu^{1-\rhoup{\rho},1-\rhodown{\rho}}\Big(\sum_{i=1}^{\xi_1 M} e_i=0\Big),
\end{align}
and similarly
 \begin{align}\label{C2}
 \P(C^{\xi,M}_2)=\nu^{\rhodown{\rho},\rhoup{\rho}}\Big(\sum_{i=1}^{\xi_1 M} e_i=0\Big).
 \end{align}
 Altogether, plugging \eqref{C1} and \eqref{C2} into \eqref{C} we obtain
 \begin{align}
 	\P\big((C^{\xi,M})^c\big)\leq \nu^{1-\rhoup{\rho},1-\rhodown{\rho}}\Big(\sum_{i=1}^{\xi_1 M} e_i>0\Big)+\nu^{\rhodown{\rho},\rhoup{\rho}}\Big(\sum_{i=1}^{\xi_1 M} e_i>0\Big).
 \end{align}
 Let us now try to explain the idea behind the proof. Let $x_j=s_{j-1}-a_j$, from \eqref{wr} we see that 
 \begin{align*}
 	w_j=\big(w_{j-1}+x_j\big)^+.
 \end{align*}
 Define the stopping time
 \begin{align*}
 	T=\sup\big\{k:k>0,w_{k-1}+x_k\geq 0\big\}
 \end{align*}
 so that
 \begin{align*}
 	w_j=w_{j-1}+x_j \quad 1\leq j \leq T.
 \end{align*}
 Using this recursion and  \eqref{S}
 \begin{align}\label{wp}
 	&w_j=w_0+S^{1,j} \quad 1\leq j \leq T\\
 \text{and}\quad	&w_j\geq 0 \quad  1\leq j \leq T.\nonumber
 \end{align}
 The dynamics behind \eqref{wp} is as follows. The waiting time $w_j$ increases when the service times are longer then usual and the interarrival times are shorter i.e. when the random walk $S^{0,j}$ goes up. Similarly, the $w_j$ decreases when the service times are fast compared to the arrival of customers i.e. $S^{0,j}$ goes down. This dynamics hold until the random walk goes below $-w_0$ where the waiting time at the queue vanishes. The r.v. $\sum_{i=1}^{\xi_1 M} e_i$ can be thought of as the local time of the queue at zero, i.e. the accumulated time of the queue being empty. The main idea behind the  proof of Proposition \ref{prop:Cub} is the observation that when $\rhoup{\rho}-\rhodown{\rho}\sim N^{-\frac{1}{3}}$, that is when the queue is in the so-called heavy traffic regime, at stationarity, the waiting time $w_0$ of  customer $0$, is of order $N^\frac{1}{3}$. As the difference between the average service time rate and the average inter-arrival time rate is of order $\rhoup{\rho}-\rhodown{\rho}\sim N^{-\frac{1}{3}}$, the simple random walk $S^{0,j}$ has  drift $-N^{-\frac{1}{3}}$.  \eqref{wp} implies that the queue's waiting time vanishes by time of order $N^\frac{2}{3}$. Over time $t=o(N^{\frac{2}{3}})$ the random walk $S_t$ will not change the waiting time at the queue by much so that with high probability $w_t$ will be of order $N^{\frac{1}{3}}$ and the r.v. $\sum_{i=1}^{t} e_i$  will be zero (see Figure \ref{fig:emptyq}).
\begin{lemma}
	\begin{figure}[t]
		\centering	
		\begin{subfigure}{.4\textwidth}
			\includegraphics[scale=1]{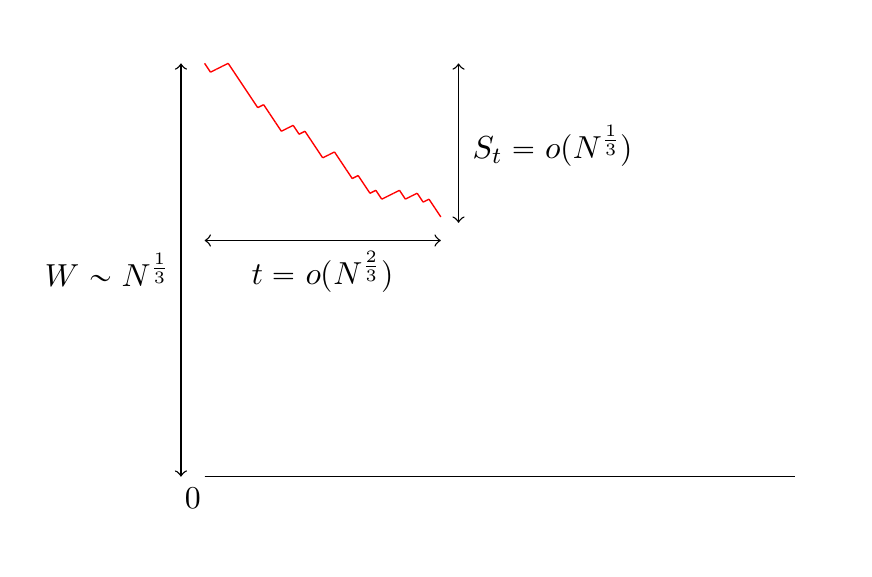}
			\caption{\small While the waiting time $W$  at the queue is of order $N^{\frac{1}{3}}$, over time of order smaller than $N^\frac{2}{3}$ the waiting time is not likely to change by much.}
		\end{subfigure}\hspace{2cm}%
		\begin{subfigure}{.4\textwidth}
		\includegraphics[scale=1]{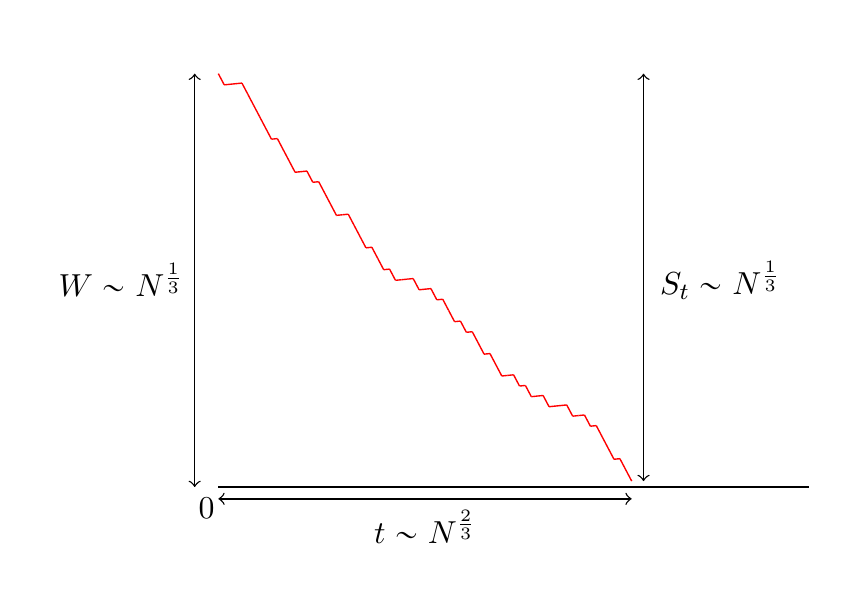}
		\caption{\small Over time of order  $N^\frac{2}{3}$, with positive probability  the waiting time $W$ vanishes, i.e. the queue will be empty.}
		\end{subfigure}
		\caption{\small The two cases of a queue at stationarity. $S_t$ is the random walk whose incremental step is $x_t=s_{t-1}-a_t$. As the rate of service at the queue is higher than the rate of interarrival $\E(x_t)<0$ and so $S_t$ is a simple random walk with a negative drift. The waiting time at the queue decreases by $S_t$ until it vanishes.} 
		\label{fig:emptyq}
	\end{figure}
	Let $\xi\in \ri\Uset$ and let $M>0$. For  $0<\beta<\alpha<1$ 
	\begin{align}\label{ub}
		\nu^{\beta,\alpha}\Big(\sum_{i=1}^{\xi_1 M} e_i>0\Big)\leq 1-\frac{\beta}{\alpha}+\int\Big[\frac{\alpha}{(\alpha+\theta)}\frac{\beta}{(\beta-\theta)}\Big]^{\xi_1M}e^{-\theta w}\frac{(\alpha-\beta)\beta}{\alpha}e^{-(\alpha-\beta)w}dw. 
	\end{align}
\end{lemma}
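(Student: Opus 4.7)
The plan is to view $\nu^{\beta,\alpha}$ as the joint law of arrival, service and waiting-time variables for a stationary M/M/1 queue with arrival rate $\beta$ and service rate $\alpha>\beta$. Classical queueing theory gives the Pollaczek--Khintchine formula
$\P(w_0=0)=1-\beta/\alpha$ and, conditionally on $\{w_0>0\}$, $w_0\sim\mathrm{Exp}(\alpha-\beta)$, so that the density of $w_0$ on $(0,\infty)$ is exactly $\tfrac{\beta(\alpha-\beta)}{\alpha}e^{-(\alpha-\beta)w}$. Moreover, Loynes' construction realises $w_0$ as a functional of the arrival/service variables at non-positive indices, making it independent of $(a_j)_{j\ge 1}$ and $(s_j)_{j\ge 0}$. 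These facts supply both the first term of the stated bound and the density against which we will integrate.

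Next I would identify the event of interest with a hitting event for a random walk. Using the Lindley recursion $w_j=(w_{j-1}+x_j)^+$ with $x_j:=s_{j-1}-a_j$, and the textbook identity $e_i>0 \iff w_i=0$, a short argument via Lindley's maximum representation yields
\begin{equation*}
\Bigl\{\sum_{i=1}^{\xi_1 M}e_i>0\Bigr\}=\{\tau_{w_0}\le \xi_1 M\},\qquad \tau_w:=\inf\{j\ge 1:S_j\le -w\},\ S_j:=\sum_{k=1}^j x_k.
\end{equation*}
Splitting on $\{w_0=0\}$ vs.\ $\{w_0>0\}$, the former contributes at most $\P(w_0=0)=1-\beta/\alpha$, giving the first term in the bound.

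For the main term I would condition on $w_0=w>0$ and apply exponential tilting. The per-step moment generating function is
\begin{equation*}
M(\theta):=\E e^{-\theta x_k}=\E e^{-\theta s_0}\cdot \E e^{\theta a_1}=\frac{\alpha}{\alpha+\theta}\cdot\frac{\beta}{\beta-\theta},\qquad \theta\in(-\alpha,\beta).
\end{equation*}
Since $\E x_k=1/\alpha-1/\beta<0$, one has $M'(0)>0$, so $M(\theta)\ge 1$ on an interval $[0,\theta^*]$ with $\theta^*>0$. For any such $\theta$, the process $Y_j:=M(\theta)^{-j}e^{-\theta S_j}$ is a non-negative martingale with $Y_0=1$. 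Optional stopping at the bounded time $\tau_w\wedge \xi_1 M$ yields $\E Y_{\tau_w\wedge \xi_1 M}=1$, and restricting to $\{\tau_w\le \xi_1 M\}$ while using $S_{\tau_w}\le -w$, $\tau_w\le \xi_1 M$, and $M(\theta)\ge 1$ gives the pointwise lower bound $Y_{\tau_w}\ge M(\theta)^{-\xi_1 M}e^{\theta w}$ on that event. Rearranging,
\begin{equation*}
\P(\tau_w\le \xi_1 M\mid w_0=w)\le M(\theta)^{\xi_1 M}e^{-\theta w}.
\end{equation*}
Integrating this against the density of $w_0$ on $(0,\infty)$ produces the integral term in the statement, and adding the $\{w_0=0\}$ contribution closes the argument.

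The only delicate point is keeping track of the direction of inequalities in the optional stopping step: one must work inside the Cram\'er region $\{M(\theta)\ge 1\}$, i.e.\ $\theta\in[0,\theta^*]$, in order that both $M(\theta)^{-\tau_w}\ge M(\theta)^{-\xi_1 M}$ and $e^{-\theta S_{\tau_w}}\ge e^{\theta w}$ hold simultaneously on $\{\tau_w\le \xi_1 M\}$. This constraint is harmless for the intended application, where in the heavy-traffic regime $\alpha-\beta\sim N^{-1/3}$ the free parameter $\theta$ will be optimised at the same scale $\sim N^{-1/3}$, well inside the Cram\'er interval.
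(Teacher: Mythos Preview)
Your proof is correct and follows the same strategy as the paper: reduce $\{\sum_i e_i>0\}$ to a random-walk hitting event (the paper does this via the cumulative-idle-time identity $\sum_{i=1}^{l}e_i=(\inf_{1\le i\le l}(w_0+S^{1,i}))^-$), split off $\{w_0=0\}$ to produce the $1-\beta/\alpha$ term, and bound the conditional hitting probability by an exponential-moment method before integrating against the stationary density of $w_0$; the only difference is that the paper applies Doob's maximal inequality to the submartingale $e^{-\theta S_j}$ whereas you apply optional stopping to the martingale $M(\theta)^{-j}e^{-\theta S_j}$, both yielding the identical bound $M(\theta)^{\xi_1 M}e^{-\theta w}$. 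Your caveat about staying in the Cram\'er region $\{M(\theta)\ge 1\}$ is in fact vacuous here: since $(\alpha+\theta)(\beta-\theta)=\alpha\beta+\theta(\beta-\alpha)-\theta^2<\alpha\beta$ for every $\theta\in(0,\beta)$ when $\beta<\alpha$, one has $M(\theta)>1$ throughout the positive domain of the MGF, so the constraint never bites.
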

\begin{proof}
	By \eqref{sume} 
	\begin{align}\label{se2}
		\sum_{i=1}^{\xi_1 M} e_i=\Big(\inf_{1\leq i\leq \xi_1M}w_{0}+S_x^{1,i}\Big)^-
	\end{align}
	where
	\begin{align*}
		S^{1,i}_x=\sum_{j=1}^{i}s_{j-1}-a_j.
	\end{align*}
	Next we  bound from above the probability that the infimum of the path of  $\{S_x^{1,i}\}_{1 \leq i\leq \xi_1 M}$ drops too low. Let $C>0$, then
	\begin{align*}
		\P\Big(\inf_{1\leq i\leq \xi_1M}S^{1,i}_x\leq -C\Big)=\P\Big(\sup_{1\leq i\leq \xi_1M}-S^{1,i}_x\geq C\Big).
	\end{align*}
	As $-S^{1,i}_x$ is a  submartingale and $\phi_\theta(x)=e^{\theta x}$ is a strictly increasing convex function for $\theta>0$ $\phi_{\theta}(-S^{1,i}_x)$ is again a submartingale. By Doob's inequality
	\begin{align}\label{doobi}
		\P\Big(\sup_{1\leq i\leq \xi_1M}-S^{1,i}_x\geq C\Big)=\P\Big(\sup_{1\leq i\leq \xi_1M}e^{\theta(-S^{1,i}_x)}\geq e^{\theta C}\Big)\leq \frac{\E\big(e^{\theta(-S^{1,\xi_1M}_x)}\big)}{e^{\theta C}}
	\end{align}
	Note that by the independence of $\arrv=(\arr_j)_{j\in\Z}$ and  $\servv=(\serv_j)_{j\in\Z}$, for $-\alpha<\theta<\beta$ 
	\begin{align}\label{mgf}
		\E\Big(e^{\theta(-S^{1,\xi_1 M})}\Big)=\Big[\E\Big(e^{-\theta s_1}\Big)\E\Big(e^{\theta a_1}\Big)\Big]^{\xi_1 M}=\Big[\frac{\alpha}{(\alpha+\theta)}\frac{\beta}{(\beta-\theta)}\Big]^{\xi_1M}.
	\end{align}
	Plugging \eqref{mgf} in \eqref{doobi}
	\begin{align}\label{doobi2}
		\P\Big(\sup_{1\leq i\leq \xi_1M}-S^{1,i}_x\geq C\Big)\leq\Big[\frac{\alpha}{(\alpha+\theta)}\frac{\beta}{(\beta-\theta)}\Big]^{\xi_1M}e^{-\theta C}
	\end{align}
	By \eqref{se2}
	\begin{align*}
	\sum_{i=1}^{\xi_1 M} e_i>0 \iff w_{0}+\inf_{1\leq i\leq \xi_1M}S_x^{1,i}<0,
	\end{align*}
	and so
	\begin{align}
		\P\Big(\sum_{i=1}^{\xi_1 M} e_i>0\Big)=\P\big(w_{0}+\inf_{1\leq i\leq \xi_1M}S_x^{1,i}<0\big)
	\end{align}
	Note that by the definition of $w_0$ (\eqref{w}), $w_0$ is independent of  $\{S_x^{1,i}\}_{i\in \Z_{> 0}}$ and so
	\begin{align}
		&\P\Big(w_{0}+\inf_{1\leq i\leq \xi_1M}S_x^{1,i}<0\Big)=\P\Big(\sup_{1\leq i\leq \xi_1M}-S_x^{1,i}>w_{0}\Big)\nonumber\\\nonumber
		&=\int\P\Big(\sup_{1\leq i\leq \xi_1M}-S_x^{1,i}>w|w_0=w\Big)\P(w_0\in dw)\\\label{eq}
		&=\int\P\Big(\sup_{1\leq i\leq \xi_1M}-S_x^{1,i}>w|w_0=w\Big)f_w(dw)=\int\P\Big(\sup_{1\leq i\leq \xi_1M}-S_x^{1,i}>w\Big)f_w(dw)
	\end{align}
	where $f_w$ is given by (see \eqref{des})
	\begin{align}\label{fw}
		f_w(dw)=\Big((1-\frac{\beta}{\alpha})\delta_0(dw)+\frac{(\alpha-\beta)\beta}{\alpha}e^{-(\alpha-\beta)w}dw\Big),
	\end{align}
	so that
	 \begin{align}\label{rwub}
	 	\P\Big(w_{0}+\inf_{1\leq i\leq \xi_1M}S_x^{1,i}<0\Big)\leq 1-\frac{\beta}{\alpha}+\int\P\Big(\sup_{1\leq i\leq \xi_1M}-S_x^{1,i}>w\Big)\frac{(\alpha-\beta)\beta}{\alpha}e^{-(\alpha-\beta)w}dw.
	 \end{align}
	  Plugging \eqref{doobi2} in \eqref{rwub} we obtain the result.
\end{proof}
\medskip 
\begin{lemma}
	Let $\xi\in \ri\Uset$ and let $M>0$. For $\rhodown{\rho}(\xi)=\rho(\xi)-rN^{-\frac{1}{3}}$, $\rhoup{\rho}(\xi)=\rho(\xi)+rN^{-\frac{1}{3}}$ and $0<\theta<\rhoup{\rho}$,
	\begin{align}\label{tub}
	\nu^{\rhodown{\rho},\rhoup{\rho}}\Big(\sum_{i=1}^{\xi_1 M} e_i>0\Big)\leq \frac{2rN^{-\frac{1}{3}}}{\rho+rN^{-\frac{1}{3}}}+\frac{\rho-rN^{-\frac{1}{3}}}{\rho+rN^{-\frac{1}{3}}}\Bigg[1+\frac{2r\theta N^{-\frac{1}{3}}+\theta^2}{\rho^2-(r^2N^{-\frac{2}{3}}+2rN^{-\frac{1}{3}}\theta+\theta^2)}\Bigg]^{\xi_1M}\Big(1+2\theta r^{-1}N^\frac{1}{3}\Big)^{-1}.
	\end{align}
\end{lemma}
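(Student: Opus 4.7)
The plan is to specialize the bound \eqref{ub} to $\alpha=\rhoup{\rho}=\rho+rN^{-1/3}$ and $\beta=\rhodown{\rho}=\rho-rN^{-1/3}$, and then to simplify each of the two summands on the right-hand side of \eqref{ub} separately. Under this choice one has $\alpha-\beta=2rN^{-1/3}$ and $\alpha\beta=\rho^2-r^2N^{-2/3}$, so the atomic contribution $1-\beta/\alpha$ coming from the $\delta_0$ mass in \eqref{fw} rearranges directly to $2rN^{-1/3}/(\rho+rN^{-1/3})$, producing the first summand of \eqref{tub}.

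For the absolutely continuous part I would pull the MGF factor $[\alpha\beta/((\alpha+\theta)(\beta-\theta))]^{\xi_1 M}$, which is independent of $w$, outside the integral. Expanding
$$(\alpha+\theta)(\beta-\theta)=\alpha\beta-\theta(\alpha-\beta)-\theta^2=\rho^2-r^2N^{-2/3}-2rN^{-1/3}\theta-\theta^2,$$
and rewriting $\alpha\beta/[(\alpha+\theta)(\beta-\theta)]$ as
$$1+\frac{2rN^{-1/3}\theta+\theta^2}{\rho^2-(r^2N^{-2/3}+2rN^{-1/3}\theta+\theta^2)}$$
produces exactly the bracketed factor appearing on the right-hand side of \eqref{tub}.

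It then remains to evaluate the Lebesgue integral $\int_0^\infty e^{-(\theta+\alpha-\beta)w}\,dw=(\theta+2rN^{-1/3})^{-1}$ and multiply by the prefactor $(\alpha-\beta)\beta/\alpha=2rN^{-1/3}(\rho-rN^{-1/3})/(\rho+rN^{-1/3})$. Reorganising the product and factoring $2rN^{-1/3}$ out of numerator and denominator gives an expression of the form $\frac{\rho-rN^{-1/3}}{\rho+rN^{-1/3}}\cdot(1+\text{const}\cdot\theta r^{-1}N^{1/3})^{-1}$, which combines with the bracketed factor above to supply the second summand of \eqref{tub}.

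Because the argument is a direct algebraic substitution into the bound from the previous lemma, there is no genuine obstacle here; the work was essentially carried out in the derivation of \eqref{ub}. The only mild points of care are (i) imposing $0<\theta<\rhoup{\rho}$ so that the moment generating function identity underlying \eqref{mgf} and hence \eqref{ub} remains in force, and (ii) keeping track of the constants generated by $\alpha-\beta=2rN^{-1/3}$ when isolating the final $(1+\text{const}\cdot\theta r^{-1}N^{1/3})^{-1}$ factor.
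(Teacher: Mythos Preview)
Your approach is correct and essentially identical to the paper's: both substitute $\alpha=\rhoup{\rho}$, $\beta=\rhodown{\rho}$ into \eqref{ub}, rewrite the MGF factor as $1+\frac{2r\theta N^{-1/3}+\theta^2}{\rho^2-(r^2N^{-2/3}+2rN^{-1/3}\theta+\theta^2)}$, and then integrate the remaining exponential in $w$ (the paper does this via the change of variables $2rN^{-1/3}w\mapsto w$, you integrate directly, which is equivalent). One minor remark: carrying out your last step explicitly gives $\bigl(1+\tfrac12\,\theta r^{-1}N^{1/3}\bigr)^{-1}$ rather than $\bigl(1+2\theta r^{-1}N^{1/3}\bigr)^{-1}$; the factor $2$ in the displayed statement (and in the paper's own final line) appears to be a typo for $\tfrac12$, but this affects only constants and not the method or the subsequent bounds.
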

\begin{proof}
	Set $\beta=\rhodown{\rho}$ and $\alpha=\rhoup{\rho}$ so that 
	\begin{align}
	\Big[\frac{\alpha}{(\alpha+\theta)}\frac{\beta}{(\beta-\theta)}\Big]^{\xi_1M}e^{-\theta w} &\leq \Big[\frac{\rho+rN^{-\frac{1}{3}}}{(\rho+rN^{-\frac{1}{3}}+\theta)}\frac{\rho-rN^{-\frac{1}{3}}}{(\rho-rN^{-\frac{1}{3}}-\theta)}\Big]^{\xi_1M}e^{-\theta w}\nonumber\\
	&=\Big[\frac{\rho^2-r^2N^{-\frac{2}{3}}}{\rho^2-(r^2N^{-\frac{2}{3}}+2rN^{-\frac{1}{3}}\theta+\theta^2)}\Big]^{\xi_1M}e^{-\theta w}\nonumber\\
	&=\Big[1+\frac{2r\theta N^{-\frac{1}{3}}+\theta^2}{\rho^2-(r^2N^{-\frac{2}{3}}+2rN^{-\frac{1}{3}}\theta+\theta^2)}\Big]^{\xi_1M}e^{-\theta w}\label{ub2}.
	\end{align}
	and that 
\begin{align}\label{fw2}
	\frac{(\alpha-\beta)\beta}{\alpha}e^{-(\alpha-\beta)w}=\frac{(2rN^{-\frac{1}{3}})(\rho-rN^{-\frac{1}{3}})}{\rho+rN^{-\frac{1}{3}}}e^{-2rN^{-\frac{1}{3}}w}.
\end{align}
Using \eqref{ub2} and \eqref{fw2} in \eqref{ub} and the change of variable $2rN^{-\frac{1}{3}}w\mapsto w$
\begin{align}
	\nu^{\rhodown{\rho},\rhoup{\rho}}\Big(\sum_{i=1}^{\xi_1 M} e_i>0\Big)
	&=\frac{2rN^{-\frac{1}{3}}}{\rho+rN^{-\frac{1}{3}}}+\frac{\rho-rN^{-\frac{1}{3}}}{\rho+rN^{-\frac{1}{3}}}\int_0^\infty\Bigg[1+\frac{2r\theta N^{-\frac{1}{3}}+\theta^2}{\rho^2-(r^2N^{-\frac{2}{3}}+2rN^{-\frac{1}{3}}\theta+\theta^2)}\Bigg]^{\xi_1M}e^{-\theta \frac{1}{2r}N^{\frac{1}{3}}w}e^{-w}dw
	\nonumber\\
	&=\frac{2rN^{-\frac{1}{3}}}{\rho+rN^{-\frac{1}{3}}}+\frac{\rho-rN^{-\frac{1}{3}}}{\rho+rN^{-\frac{1}{3}}}\Bigg[1+\frac{2r\theta N^{-\frac{1}{3}}+\theta^2}{\rho^2-(r^2N^{-\frac{2}{3}}+2rN^{-\frac{1}{3}}\theta+\theta^2)}\Bigg]^{\xi_1M}\Big(1+\theta 2r^{-1}N^\frac{1}{3}\Big)^{-1}\nonumber
\end{align}
\end{proof}
\begin{lemma}\label{lem:C1ub}
	Let $\xi\in \ri\Uset$ and let $M>0$ such that $M\leq cN^\frac{2}{3}$. There exists $C(\xi,c)>0$ such that
	\begin{align}\label{C1ub}
		\P\Big(\big(C^{\xi,M}_1\big)^c\Big)\leq C N^{-\frac{1}{4}}(\xi_1M)^\frac{3}{8}. 
	\end{align}
\end{lemma}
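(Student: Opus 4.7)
The plan is to repeat verbatim the derivation of \eqref{tub} for the other marginal of the Busemann coupling. By exactly the argument that produced \eqref{C1}, $\P\bigl((C^{\xi,M}_1)^c\bigr)=\nu^{1-\rhoup{\rho},1-\rhodown{\rho}}\bigl(\sum_{i=1}^{\xi_1M}e_i>0\bigr)$. Applying \eqref{ub} with $\alpha=1-\rhodown{\rho}$ and $\beta=1-\rhoup{\rho}$ (so that $\alpha-\beta=2rN^{-1/3}$) and rerunning the manipulations that led to \eqref{tub}, I obtain the analogue
\[
\P\bigl((C^{\xi,M}_1)^c\bigr)\le \frac{2rN^{-1/3}}{(1-\rho)+rN^{-1/3}}+\frac{(1-\rho)-rN^{-1/3}}{(1-\rho)+rN^{-1/3}}\Bigg[1+\frac{2r\theta N^{-1/3}+\theta^2}{(1-\rho)^2-(r^2N^{-2/3}+2rN^{-1/3}\theta+\theta^2)}\Bigg]^{\xi_1M}\frac{1}{1+2\theta r^{-1}N^{1/3}},
\]
valid for any $0<\theta<1-\rhoup{\rho}$. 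The rest of the proof is then a careful choice of the free parameters $r$ and $\theta$.

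The heuristic guiding the choice is that three quantities are simultaneously in play: the pointmass contribution $(\alpha-\beta)/\alpha\asymp rN^{-1/3}$ coming from the atom of the stationary waiting-time distribution; the Doob moment generating function of the random walk $S^{1,\cdot}_x$, whose logarithm behaves like $\xi_1M\theta^2/(1-\rho)^2$ for $\theta$ small; and the heavy-traffic integral factor $(1+2\theta r^{-1}N^{1/3})^{-1}$. Keeping the Doob exponent $O(1)$ forces $\theta\lesssim(\xi_1M)^{-1/2}$, the diffusive scale of $S^{1,\cdot}_x$; saturating this cap pins the integral factor at order $rN^{-1/3}\sqrt{\xi_1M}$, and choosing $r$ so that this common order equals $N^{-1/4}(\xi_1M)^{3/8}$ prescribes $r=r_0N^{1/12}(\xi_1M)^{-1/8}$ and $\theta=\theta_0(\xi_1M)^{-1/2}$ for constants $r_0,\theta_0>0$ depending only on $\xi$ and $c$.

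The verification with these choices is then routine: the prefactor is $O(rN^{-1/3})=O(N^{-1/4}(\xi_1M)^{-1/8})$ and is dominated by the target since $\xi_1M\ge1$; using $M\le cN^{2/3}$, the quantity $\xi_1M(2r\theta N^{-1/3}+\theta^2)/(1-\rho)^2$ is a bounded constant, so the bracketed power is at most $e^{O(1)}$; and $2\theta r^{-1}N^{1/3}=(2\theta_0/r_0)N^{1/4}(\xi_1M)^{-3/8}\ge1$ once $c$ is small enough (this quantity is $\ge(2\theta_0/r_0)c^{-3/8}$), whence $(1+2\theta r^{-1}N^{1/3})^{-1}\le (r_0/2\theta_0)N^{-1/4}(\xi_1M)^{3/8}$. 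The principal obstacle is precisely this smallness-of-$c$ condition together with the requirement that $r$ stay in the admissible window of Lemma \ref{lem-lb1} and respect $0<\rhodown{\rho}<\rhoup{\rho}<1$; these joint constraints force $C$ to depend on $c$ and be only \emph{locally} bounded in $c$, as claimed.
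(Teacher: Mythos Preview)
Your approach is essentially identical to the paper's: derive the analogue of \eqref{tub} with $\rho$ replaced by $1-\rho$ (since $C^{\xi,M}_1$ corresponds to the horizontal Busemann increments and hence to $\nu^{1-\rhoup{\rho},1-\rhodown{\rho}}$), then plug in the same parameter choices $r\asymp N^{1/12}(\xi_1M)^{-1/8}$ and $\theta\asymp(\xi_1M)^{-1/2}$ and bound the three resulting factors exactly as in the paper's proof. One small remark: your smallness-of-$c$ condition for the bound on $(1+2\theta r^{-1}N^{1/3})^{-1}$ is unnecessary, since $(1+x)^{-1}\le x^{-1}$ for all $x>0$; the paper uses this directly in \eqref{B3} without any constraint on $c$, and the $c$-dependence of $C$ enters only through the bracketed factor $B_2$.
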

\begin{proof}
	Let $r=(\xi_1M)^{-\frac{1}{8}}N^{\frac{1}{12}}$ and $\theta=(\xi_1M)^{-\frac{1}{2}}$ in \eqref{tub} to obtain
	\begin{align}\label{mub}
		\nu^{\rhodown{\rho},\rhoup{\rho}}\Big(\sum_{i=1}^{\xi_1 M} e_i>0\Big)\leq A+B,
	\end{align}
	where
	\begin{align*}
		A=\frac{2N^{-\frac{1}{4}}(\xi_1M)^{-\frac{1}{8}}}{\rho+N^{-\frac{1}{4}}(\xi_1M)^{-\frac{1}{8}}}
	\end{align*}
	and 
	\begin{align}\label{BT}
		B=B_1\times B_2 \times B_3
	\end{align}
	where
	\begin{align}
		B_1&=\frac{\rho-N^{-\frac{1}{4}}(\xi_1M)^{-\frac{1}{8}}}{\rho+N^{-\frac{1}{4}}(\xi_1M)^{-\frac{1}{8}}}=1-\frac{2N^{-\frac{1}{4}}(\xi_1M)^{-\frac{1}{8}}}{\rho+N^{-\frac{1}{4}}(\xi_1M)^{-\frac{1}{8}}}\label{B1}\\
		B_2&=\Bigg[1+\frac{2N^{-\frac{1}{4}}(\xi_1M)^{-\frac{5}{8}}+(\xi_1M)^{-1}}{\rho^2-(N^{-\frac{1}{2}}(\xi_1M)^{-\frac{1}{4}}+2N^{-\frac{1}{4}}(\xi_1M)^{-\frac{5}{8}}+(\xi_1M)^{-1})}\Bigg]^{\xi_1M}\label{B22}\\
		B_3&=\Big(1+2(\xi_1M)^{-\frac{3}{8}}N^{\frac{1}{4}}\Big)^{-1}\leq N^{-\frac{1}{4}}(\xi_1M)^\frac{3}{8}.\label{B3}
	\end{align}
	There exists $C_A(\rho)>0$ such that 
	\begin{align}\label{A}
		A\leq C_AN^{-\frac{1}{4}}(\xi_1M)^{-\frac{1}{8}} \quad \text{for $N\geq 1$}.
	\end{align}
	Note that by our assumption on $M$ the numerator in \eqref{B22} is dominated by $2cN^\frac{2}{3} \vee(\xi_1M)^{-1}$ and 
	\begin{align*}
		B_2 \rightarrow C(c) \quad \text{as $N\rightarrow \infty$},
	\end{align*}
	where $C(c)>0$ is locally bounded in  $c$. In particular, there exists $C_{B_2}(\rho)>0$ such that
	\begin{align}\label{B2}
		B_2\leq C_{B_2} \quad \text{for $N\geq 1$}.
	\end{align}
	Plugging \eqref{B1}, \eqref{B2} and \eqref{B3} into \eqref{BT} we see that there exists $C_B(\rho)>0$ such that
	\begin{align}\label{B}
		B\leq N^{-\frac{1}{4}}(\xi_1M)^\frac{3}{8}.
	\end{align}
	Plugging now \eqref{A} and \eqref{B} into \eqref{mub} and using \eqref{C1} we obtain the result.
\end{proof}
\begin{proof}[Proof of Proposition \ref{prop:Cub}]
	Similar to Lemma \ref{lem:C1ub} one can show that for $\xi\in \ri\Uset$ and  $M>0$ such that $M\leq cN^\frac{2}{3}$. There exists $C(\xi,c)>0$ such that
	\begin{align}\label{C2ub}
	\P\Big(\big(C^{\xi,M}_2\big)^c\Big)\leq C N^{-\frac{1}{4}}(\xi_2M)^\frac{3}{8}. 
	\end{align}
	\eqref{C1ub} and \eqref{C2ub} imply the result.
\end{proof}
\begin{proof}[Proof of Theorem \ref{thm:loc}]
	Plugging \eqref{ubA} and \eqref{ubC} into \eqref{ubs} we see that there exists $c_0>0$ such that for every  $c\leq c_0$
	\begin{align}\label{ine}
		\P\Big((\mathcal{H}^{\xi,cN^\frac{2}{3}})^c\Big)\leq CN^{-\frac{1}{4}}(cN^\frac{2}{3})^\frac{3}{8}\leq Cc^\frac{3}{8}.
	\end{align}
	By the definition of $\mathcal{H}^{\xi,cN^\frac{2}{3}}$, \eqref{ine} shows that there exists a coupling between 
	\begin{align*}
			\tilde{H}^{N,c}_{(x,y)}=\rim{G}_{N\xi,y}-\rim{G}_{N\xi,x} \quad (x,y)\in \mathcal{E}(R^{\xi,c}),
	\end{align*}
	and $B^{\xi(\rho)}|_{\mathcal{E}(R^{\xi,c})}$, the Busemann function $B^{\xi(\rho)}$ restricted on edges in $\mathcal{E}(R^{\xi,c})$, such that
	\begin{align}\label{ine2}
		\P\Big(\tilde{H}^{N,c}\neq B^{\xi(\rho)}|_{\mathcal{E}(R^{\xi,c})}\Big)\leq Cc^\frac{3}{8}. 
	\end{align}
	\eqref{ine2} shows that
	\begin{align}\label{ine3}
		d_\text{TV}\Big(\tilde{H}^{N,c}, B^{\xi(\rho)}|_{\mathcal{E}(R^{\xi,c})}\Big)\leq Cc^\frac{3}{8}.
	\end{align}  
	As the distribution of $B^{\xi(\rho)}|_{\mathcal{E}(R^{\xi,c})}$ equals that of 
	\begin{align*}
		\tilde{H}^{{\xi(\rho)},N,c}_{(x,y)}=\rim{G}^{\rho(\xi)}_{N\xi,y}-\rim{G}^{\rho(\xi)}_{N\xi,x} \quad (x,y)\in \mathcal{E}(R^{\xi,c}),
	\end{align*}
	\eqref{ine3} implies that 
	\begin{align}\label{ine4}
		d_\text{TV}\Big(\tilde{H}^{N,c}, \tilde{H}^{{\xi(\rho)},N,c}\Big)\leq Cc^\frac{3}{8}.
	\end{align}
	\eqref{ine4} implies \eqref{loc} by rotating the LPP picture by $180^\circ$.
\end{proof}
\begin{proof}[Proof of Theorem \ref{thm:stb}]
	Plugging \eqref{ubA} and \eqref{ubC} into \eqref{ubs} we obtain the result. 
\end{proof}
\begin{proof}[Proof of Theorem \ref{thm:airy}]
	 Define	
	\begin{align*}
		&\Delta L^N_x=L^N_x-L^N_0\\
		&=2^{-\frac{4}{3}}N^{-\frac{1}{3}}\Big(G_{(0,0),(N+x(2N)^\frac{2}{3},N-x(2N)^\frac{2}{3})}-G_{(0,0),(N,N)}\Big).
	\end{align*}
	By Theorem \ref{thm:loc} there exists $c_0>0$ and  $C(c_0)>0$ such that for any $|c|\leq c_0$, with probability at least $1-Cc^\frac{3}{8}$, simultaneously for all $|x|\leq c$
	\begin{align*}
		G_{(0,0),(N+x(2N)^\frac{2}{3},N-x(2N)^\frac{2}{3})}-G_{(0,0),(N,N)}=G^\frac{1}{2}_{(0,0),(N+x(2N)^\frac{2}{3},N-x(2N)^\frac{2}{3})}-G^\frac{1}{2}_{(0,0),(N,N)}.
	\end{align*} 
	Defining 
	\begin{align*}
		\Delta L^{\frac12,N}_x=2^{-\frac{4}{3}}N^{-\frac{1}{3}}\Big(G^\frac{1}{2}_{(0,0),(N+x(2N)^\frac{2}{3},N-x(2N)^\frac{2}{3})}-G^\frac{1}{2}_{(0,0),(N,N)}\Big),
	\end{align*}
	we conclude that with probability at least $1-Cc^\frac{3}{8}$,  simultaneously for all $|x|\leq c$
	\begin{align}
		\Delta L^N_x=\Delta L^{\frac12,N}_x.
	\end{align}
	which implies that
	\begin{align}\label{eq1}
		d_{TV}\big(\Delta L^N_x|_{[-c,c]},\Delta L^{\frac12,N}_x|_{[-c,c]}\big)\leq Cc^{\frac{3}{8}}.
	\end{align}
	The following limits are in distribution in the topology of continuous functions on $[-c,c]$.
	\begin{align}
		&\lim_{N\rightarrow \infty}\Delta L^N=\mathcal{A}'_2=\mathcal{A}_2(x)-\mathcal{A}_2(0)-x^2\label{limits}\\
		&\lim_{N\rightarrow \infty}\Delta L^{\frac12,N}=\mathcal{B}\label{limits2}.
	\end{align}
	Using Lemma \ref{lem:cod2} with \eqref{limits}--\eqref{limits2} and \eqref{eq1} implies that
	\begin{align*}
		d_{TV}\big(\mathcal{A}'_2|_{[-c,c]},\mathcal{B}|_{[-c,c]}\big)\leq 3Cc^\frac{3}{8},
	\end{align*}
	which implies the result.
\end{proof}
\begin{proof}[Proof of Corollary \ref{thm:airyr}]
	By the stationarity of the $\mathcal{A}_2$, it is enough to verify the claim for $I=[0,a]$ for some $a>0$. For every $\epsilon>0$, let $\Omega^\epsilon=C[0,\epsilon]$ be the space of continuous functions on the interval $[0,\epsilon]$. Let $\mathcal{F}^\epsilon$ be the Borel sigma algebra associated with the supremum metric on $\Omega^\epsilon$.  By Theorem \ref{thm:airy}, for every $\delta>0$ there exists $0<\epsilon\leq a$ and a probability space $(\Omega^\epsilon,\P^\epsilon)$ such that
	\begin{align}\label{reg}
		\P^\epsilon\Big(\mathcal{A}'_2|_{[0,\epsilon]}=\mathcal{B}|_{[0,\epsilon]}\Big)>1-\delta.
	\end{align}
	 For $\epsilon\in(0,a]$, \eqref{reg} implies that with probability larger than $1-\delta$
	\begin{align}\label{reg3}
	&\sup_{t\in I}\limsup_{h\downarrow 0}\frac{\mathcal{A}'_2(t+h)-\mathcal{A}'_2(t)}{\omega_B(h)}\geq \sup_{t\in [0,\epsilon)}\limsup_{h\downarrow 0}\frac{\mathcal{A}'_2(t+h)-\mathcal{A}'_2(t)}{\omega_B(h)}\\
	&= \sup_{t\in [0,\epsilon)}\limsup_{h\downarrow 0}\frac{\mathcal{B}(t+h)-\mathcal{B}(t)}{\omega_B(h)}=1,\nonumber
	\end{align}
	where the last equality comes from L\'{e}vy's modulus of continuity \cite[Theorem 10.1]{morters2010brownian} and self-similarity of the Brownian motion. Taking $\delta\rightarrow 0$
	\begin{align*}
		\sup_{t\in I}\limsup_{h\downarrow 0}\frac{\mathcal{A}'_2(t+h)-\mathcal{A}'_2(t)}{\omega_B(h)}\geq 1 \quad \text{with probability $1$}.
	\end{align*}
	Note that
	\begin{align}\label{reg2}
		&\sup_{t\in I}\limsup_{h\downarrow 0}\frac{\mathcal{A}'_2(t+h)-\mathcal{A}'_2(t)}{\omega_B(h)}\\\nonumber
		&=\sup_{t\in I}\limsup_{h\downarrow 0}\frac{\mathcal{A}_2(t+h)-\mathcal{A}_2(t)}{\omega_B(h)}+\sup_{t\in I}\limsup_{h\downarrow 0}\frac{(t+h)^2-t^2}{\omega_B(h)}\\\nonumber
		&=\sup_{t\in I}\limsup_{h\downarrow 0}\frac{\mathcal{A}_2(t+h)-\mathcal{A}_2(t)}{\omega_B(h)}
	\end{align}
	Plugging \eqref{reg2} in \eqref{reg3} implies the result.
\end{proof}
\section{Coalescence of Geodesics}
In this section we prove Theorem \ref{thm:ubc} and Theorem \ref{thm:ubc2}. For technical reasons, namely the direction in which we send $v_n$ to infinity in \eqref{v:853.3}, we prove the results for a setup  that is a bit different, yet equivalent,  to the one in Figure \ref{fig:coal} (see Figure \ref{fig:ubc}).
\subsection{Upper bound on $\P(|\ct-\pc|\leq \alpha N)$}
Let $\rim{G}^\rhoup{\rho}$ and $\rim{G}^\rhodown{\rho}$ be the stationary LPP with $\rhoup{\rho}=\rho+rN^{-\frac{1}{3}}$ and $\rhodown{\rho}=\rho-rN^{-\frac{1}{3}}$ constructed through \eqref{Gr11}--\eqref{Gr12} with the boundary weights on the north-east boundaries of $\Rb^{\xi N}$ as in \eqref{BW} and the bulk weights $\{\omega_x\}_{x\in\Rb^{\xi N}}$.
Similarly to  \eqref{Adef} define
\begin{align*}
\rim{A}^{r}=\Big\{\rim{Z}^{\rhodown{\rho}(\xi)}_{q^1,\ct}<-aN^\frac{2}{3}\Big\}\bigcap \Big\{\rim{Z}^{\rhoup{\rho}(\xi)}_{q^1,\ct}>0\Big\}.
\end{align*}
Similarly to Corollary \ref{cor:ubA} we have
\begin{lemma}
	Fix $\xi\in \ri\Uset$ and $a>0$. There exist $C(\xi,a)>0$, locally bounded in $a$, and $N_0(\xi,r)>0$ such that 
	\begin{align}\label{ub3}
	\P((\rim{A}^{r})^c)\leq Cr^{-3}.
	\end{align}
\end{lemma}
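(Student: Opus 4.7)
My plan is to adapt the argument of Corollary \ref{cor:ubA} (equivalently Lemma \ref{lem-lb1}) to accommodate the shifted target at $-aN^{2/3}$. First I would apply the union bound
\[
\P\bigl((\rim{A}^{r})^c\bigr) \le \P\bigl(\rim{Z}^{\rhoup{\rho}(\xi)}_{q^1,\ct} \le 0\bigr) + \P\bigl(\rim{Z}^{\rhodown{\rho}(\xi)}_{q^1,\ct} \ge -aN^{2/3}\bigr)
\]
and handle the two summands separately via the exit-point concentration estimate of \cite[Corollary 5.10]{sepp-cgm-18}.

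For the first summand I would repeat the Taylor expansion performed in \eqref{comp}--\eqref{comp2} with $M=0$: the backward characteristic line of density $\rhoup{\rho}=\rho+rN^{-1/3}$ issued from $\ct=N\xi$ meets the vertical axis at height
\[
N\xi_2 - N\xi_1\frac{\rhoup{\rho}^2}{(1-\rhoup{\rho})^2} \;=\; -C(\xi)\, rN^{2/3} + O(r^2 N^{1/3}),
\]
which, for $r$ above an absolute threshold and $N \ge N_0(\xi,r)$, lies at least $N^{2/3}$ below $q^1=0$. The cited concentration bound then delivers the desired $Cr^{-3}$.

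For the second summand I would carry out the symmetric calculation for $\rhodown{\rho}=\rho-rN^{-1/3}$: its backward characteristic from $\ct$ reaches the vertical axis at height $+C(\xi)\, rN^{2/3} + O(r^2 N^{1/3})$. The new ingredient compared to Corollary \ref{cor:ubA} is that this height must now exceed $aN^{2/3}$ rather than merely be positive; this forces a threshold of the form $r > C'(\xi)\,a$, and yields a macroscopic margin of order $(C(\xi) r - a)N^{2/3}$ above $q^2 = aN^{2/3} e_2$. Applying the concentration bound again gives an upper bound of $C(\xi,a)\, r^{-3}$, the constant absorbing the comparison with $a$ and remaining locally bounded in $a$.

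The main (mild) obstacle I anticipate is purely bookkeeping: I need to verify that for the chosen $r$-range the $O(r^2 N^{1/3})$ correction terms coming from expanding $\rhodown{\rho}^2/(1-\rhodown{\rho})^2$ and $\rhoup{\rho}^2/(1-\rhoup{\rho})^2$ to first order in $rN^{-1/3}$ remain strictly dominated by the leading linear term $rN^{2/3}$. This is exactly what forces the $N_0(\xi,r)$ dependence in the statement, and no new probabilistic input beyond \cite[Corollary 5.10]{sepp-cgm-18} is needed. Summing the two bounds then produces the claimed $Cr^{-3}$ estimate.
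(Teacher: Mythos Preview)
The proposal is correct and follows essentially the same approach as the paper: union bound, invoke \eqref{lb-2} for the $\rhoup{\rho}$ term, and for the $\rhodown{\rho}$ term redo the characteristic-line computation with an $a$-dependent threshold on $r$ before applying \cite[Corollary 5.10]{sepp-cgm-18}. The only cosmetic difference is that the paper packages the $aN^{2/3}$ shift via the induced LPP at $u=\xi N - aN^{2/3}e_2$ (Lemma \ref{app-lm1}), which is exactly your direct comparison of the characteristic height against $aN^{2/3}$.
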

\begin{proof}
	By definition of $\rim{A}^{r}$ 
	\begin{align}
	\P((\rim{A}^{r})^c)\leq \P\big(\rim{Z}^{\rhodown{\rho}(\xi)}_{q^1,\ct}\geq-aN^\frac{2}{3}\big)+\P\big(\rim{Z}^{\rhoup{\rho}(\xi)}_{q^1,\ct}<0\big)
	\end{align}
	The bound on $\P\big(\rim{Z}^{\rhoup{\rho}(\xi)}_{q^1,\ct}<0\Big)$ comes from \eqref{lb-2}, it remains to bound $\P\big(\rim{Z}^{\rhodown{\rho}(\xi)}_{q^1,\ct}\geq -aN^\frac{2}{3}\big)$. Let $u=(u_1,u_2)=\xi N-aN^\frac{2}{3}e_2$, and let $\rim{G}^{\rhodown{\rho},[q^1]}_{u,x}$ be the LPP induced by $\rim{G}^\rhodown{\rho}_{q^1,x}$ at $u$. 
	By Lemma \ref{app-lm1} we see that
	\begin{align}
	\P\big(\rim{Z}^\rhodown{\rho}_{q^1,x}\geq-aN^\frac{2}{3}\big)=\P\big(\rim{Z}^{[q^1]}_{u,x}\geq 0\big),
	\end{align} 
	where $\rim{Z}^{[q^1]}_{u,x}$ is the exit point of $\rim{G}^{\rhodown{\rho},[q^1]}_{u,x}$. Compute
	\begin{align*}
	&\big(u_2-\frac{\rhodown{\rho}^2}{(1-\rhodown{\rho})^2}u_1\big)-\ct_2=\xi_2N-\frac{\rhodown{\rho}^2}{(1-\rhodown{\rho})^2}\xi_1 N-aN^\frac{2}{3}\\
	&=\frac{\xi_2 N\big(1-2(\rho-rN^{-\frac{1}{3}})+(\rho-rN^{-\frac{1}{3}})^2\big)-\big(\rho^2-2\rho rN^{-\frac{1}{3}}+r^2N^{-\frac{2}{3}}\big)\xi_1N}{(1-\rhodown{\rho})^2}-aN^\frac{2}{3}\nonumber\\
	&=\frac{ N\big(\xi_2(1-\rho)^2-\xi_1\rho^2\big)+2r N^\frac{2}{3}\big(\xi_1\rho +\xi_2(1-\rho)\big)+ N^\frac{1}{3}r^2(\xi_2-\xi_1)}{(1-\rhodown{\rho})^2}-aN^\frac{2}{3}\\
	&=\frac{2(r-c'a) N^\frac{2}{3}\big(\xi_1\rho +\xi_2(1-\rho)\big)+ N^{-\frac{1}{3}}r^2(\xi_2-\xi_1)}{(1-\rhodown{\rho})^2}\nonumber,
	\end{align*}
	where
	\begin{align}
		c'=\frac{(1-\rhodown{\rho})^2}{2[\xi_1\rho+\xi_2(1-\rho)]}.
	\end{align}	
	It follows that there exists $N_0(\xi,r)$ such that for $N>N_0$
	\begin{align}
		\Big(u_2-\frac{\rhodown{\rho}^2}{(1-\rhodown{\rho})^2}u_1\Big)-\ct_2>\frac{(r-ca) N^\frac{2}{3}\big(\xi_1\rho +\xi_2(1-\rho)\big)}{(1-\rho)^2}\nonumber,
	\end{align}
	where 
	\begin{align}
		c=\frac{(1-\rho)^2}{4[\xi_1\rho+\xi_2(1-\rho)]}.
	\end{align}
	 It then follows by \cite{sepp-cgm-18}[Corollary 5.10] that  there exists a constant $C_1(\xi)>0$ such that 
	 \begin{align*}
	 	\P\big(\rim{Z}^{[q^1]}_{u,x}>0\big)\leq C_1(r-ca)^{-3},
	 \end{align*}
	 the proof is now complete.
\end{proof}
Let $0<\alpha<1$ and $o_\alpha=\alpha\xi N$. We define $\Ra=[\ct,o_\alpha]$ to be the rectangle whose left bottom corner is $\ct$ and whose upper right corner is $o_\alpha$. We shall need the following result.
\begin{lemma}\label{lem:zub}
	Fix $\xi\in \ri\Uset$, $0<\alpha<1$ and $r>0$. There exists $C(\xi)>0$ such that for $t>\alpha r$ and $N>N_0(\xi,r)$
	\begin{align}
		&\P\Big(|\rim{Z}^\rhoup{\rho}_{o_\alpha,\ct}|\geq  tN^\frac{2}{3}\Big)\leq C\alpha^2 t ^{-3}\label{aub1}\\
		&\P\Big(|\rim{Z}^\rhodown{\rho}_{o_\alpha,\ct}|\geq  tN^\frac{2}{3}\Big)\leq C\alpha^2 t^{-3}\label{aub2}.
	\end{align}
\end{lemma}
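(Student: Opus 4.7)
The plan is to apply the cube-root exit-point concentration bound \cite{sepp-cgm-18}[Corollary 5.10] to the stationary LPP $\rim{G}^{\rhoup{\rho}}$ on the rectangle $\Ra = [\ct, o_\alpha]$, with the natural scale now set by the macroscopic side length of $\Ra$, which is of order $\alpha N$, rather than by $N$ itself. Up to relabelling of endpoints, this is exactly the setup from the proof of Lemma \ref{lem-lb1} with the ambient size $N$ replaced by $\alpha N$, and the argument is an immediate adaptation of that proof.

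The first step will be to compute the transverse displacement between $\ct$ and the point where the characteristic line of density $\rhoup{\rho} = \rho + r N^{-1/3}$ leaving $o_\alpha = \alpha \xi N$ meets the vertical line through $\ct$. Expanding $\rhoup{\rho}{}^2/(1-\rhoup{\rho})^2$ around $\rho^2/(1-\rho)^2$ exactly as in the chain of equalities (3.5)--(3.6) of the proof of Lemma \ref{lem-lb1}, this displacement has magnitude $\tfrac{2\rho \xi_1}{(1-\rho)^3}\,\alpha\, r\, N^{2/3}$ plus lower-order corrections in $N^{1/3}$; this is the ``typical'' position of the exit point $\rim{Z}^{\rhoup{\rho}}_{o_\alpha,\ct}$. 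Choosing $N_0(\xi,r)$ large enough makes the lower-order corrections negligible relative to $\alpha r N^{2/3}$.

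Next I will observe that on the event $\{|\rim{Z}^{\rhoup{\rho}}_{o_\alpha,\ct}| \geq t N^{2/3}\}$ the exit point deviates from this typical shift by an amount of order $t N^{2/3}$ as soon as $t$ exceeds a constant multiple of $\alpha r$, which is what the hypothesis $t > \alpha r$ supplies (any implicit multiplicative constant being absorbed into $C$). In the local fluctuation scale for the rectangle $\Ra$, namely $(\alpha N)^{2/3} = \alpha^{2/3} N^{2/3}$, this deviation corresponds to $s := c(\xi)\, t\, \alpha^{-2/3}$ standard units. Applying \cite{sepp-cgm-18}[Corollary 5.10] to $\rim{G}^{\rhoup{\rho}}$ on $\Ra$ then yields
\[
  \P\big(|\rim{Z}^{\rhoup{\rho}}_{o_\alpha,\ct}| \geq t N^{2/3}\big) \;\leq\; C s^{-3} \;=\; C(\xi)\, \alpha^{2}\, t^{-3},
\]
which is \eqref{aub1}. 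The symmetric argument with $\rhoup{\rho}$ replaced by $\rhodown{\rho}$, in which the characteristic line shifts to the opposite side, produces \eqref{aub2}.

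The only real bookkeeping obstacle will be to carefully track the $\xi$-dependent constants that appear both in the shift computation and in Corollary 5.10, and to verify that the hypothesis $t > \alpha r$ (rather than the a priori stronger $t > C \alpha r$) is enough after these constants are absorbed into the final $C(\xi)$. This is the same manipulation that was already carried out in the proof of Lemma \ref{lem-lb1}, so no new analytic input beyond Corollary 5.10 is required.
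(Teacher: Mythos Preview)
Your proposal is correct and follows essentially the same approach as the paper: compute the characteristic displacement at scale $\alpha r N^{2/3}$, then apply \cite{sepp-cgm-18}[Corollary 5.10] on the rectangle $\Ra$ of side $\alpha N$, so that the deviation $t N^{2/3}$ corresponds to $t\alpha^{-2/3}$ units of the local scale $(\alpha N)^{2/3}$ and yields the bound $C\alpha^2 t^{-3}$. The paper implements this via the induced LPP $\rim G^{\rhoup\rho,[o_\alpha]}$ at a shifted base point $u=o_\alpha-A_1 tN^{2/3}e_1$ (Lemma~\ref{app-lm1}) and then a change of variables $A_1 t\mapsto t$, which is exactly the bookkeeping you allude to in your last paragraph.
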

\begin{proof}
	We prove \eqref{aub1} as \eqref{aub2} is similar. In fact we only prove here the upper bound for $\P\big(\rim{Z}^\rhoup{\rho}_{o_\alpha,\ct}>tN^\frac{2}{3}\big)$ as the bound on $\P\big(\rim{Z}^\rhoup{\rho}_{o_\alpha,\ct}<tN^\frac{2}{3}\big)$ is similar. Let $\rim{G}^{\rhoup{\rho},[o_\alpha]}_{u,x}$ be the LPP induced by $\rim{G}^\rhoup{\rho}_{o_\alpha,x}$ at $u$ where $u=\alpha \xi  N-A_1tN^\frac{2}{3}e_1$, and
	\begin{align}\label{A1}
	A_1=\frac{4(\xi_1\rho+\xi_2(1-\rho))}{\rho^2}.
	\end{align}
	By Lemma \ref{app-lm1} we see that
	\begin{align}\label{ib}
		\P\big(\rim{Z}^{[o_\alpha]}_{u,x}>0\big)=\P\big(\rim{Z}^\rhoup{\rho}_{o_\alpha,x}>A_1tN^\frac{2}{3}\big),
	\end{align}
	where $\rim{Z}^{[o_\alpha]}_{u,x}$ and $\rim{Z}^\rhoup{\rho}_{o_\alpha,x}$ are the exit points of $\rim{G}^{\rhoup{\rho},[o_\alpha]}_{u,x}$ and $\rim{G}^\rhoup{\rho}_{o_\alpha,x}$ respectively. We would like to show that the characteristic $\xi(\rhoup{\rho})$ emanating from the point $u=(u_1,u_2)$ goes well above the point $\ct$ on the scale of $N^\frac{2}{3}$. Compute
	\begin{align*}
		&\big(u_2-\frac{\rhoup{\rho}^2}{(1-\rhoup{\rho})^2}u_1\big)-\ct_2=\alpha\xi_2N-\frac{\rhoup{\rho}^2}{(1-\rhoup{\rho})^2}\big(\alpha\xi_1 N-A_1 tN^\frac{2}{3}\big)\\
		&=\frac{\alpha\xi_2 N\big(1-2(\rho+rN^{-\frac{1}{3}})+(\rho+rN^{-\frac{1}{3}})^2\big)-\big(\rho^2+2\rho rN^{-\frac{1}{3}}+r^2N^{-\frac{2}{3}}\big)\alpha\xi_1N}{(1-\rhoup{\rho})^2}+\frac{\rhoup{\rho}^2}{(1-\rhoup{\rho})^2}A_1 tN^\frac{2}{3}\nonumber\\
		&=\frac{\alpha N\big(\xi_2(1-\rho)^2-\xi_1\rho^2\big)-2r\alpha N^\frac{2}{3}\big(\xi_1\rho +\xi_2(1-\rho)\big)+\alpha N^\frac{1}{3}r^2(\xi_2-\xi_1)}{(1-\rhoup{\rho})^2}+\frac{\rhoup{\rho}^2}{(1-\rhoup{\rho})^2}A_1 tN^\frac{2}{3}\nonumber\\
		&\geq\frac{tN^{\frac{2}{3}}\Big(\rho^2A_1-\frac{2r\alpha}{t} \big(\xi_1\rho +\xi_2(1-\rho)\big)+\frac{\alpha r^2}{t} N^{-\frac{1}{3}}(\xi_2-\xi_1)\Big)}{(1-\rhoup{\rho})^2}\nonumber.
	\end{align*}
	By \eqref{A1}, for $t\geq r\alpha$
	\begin{align*}
	&\Big(u_2-\frac{\rhoup{\rho}^2}{(1-\rhoup{\rho})^2}u_1\Big)-\ct_2\geq\frac{t N^\frac{2}{3}\big(2\big(\xi_1\rho +\xi_2(1-\rho)\big)+\frac{\alpha r^2}{t} N^{-\frac{1}{3}}(\xi_2-\xi_1)\big)}{(1-\rho)^2}\nonumber.
	\end{align*}
	 It follows that there exists $N_0(r)>0$ such that for $N>N_0$
	\begin{align*}
		&\Big(u_2-\frac{\rhoup{\rho}^2}{(1-\rhoup{\rho})^2}u_1\Big)-\ct_2\geq\frac{\alpha^{-\frac{2}{3}}t (\alpha N)^\frac{2}{3}\big[\xi_1\rho +\xi_2(1-\rho)\big]}{(1-\rho)^2}\nonumber.
	\end{align*}
	  It then follows by \cite{sepp-cgm-18}[Corollary 5.10] that  there exists a constant $C'_1(\xi)>0$
	 \begin{align}\label{ib1}
	 	\P\big(\rim{Z}^{[o_\alpha]}_{u,x}>0\big)\leq C'_1\alpha^2 t^{-3}
	 \end{align}
	 Plugging \eqref{ib} in \eqref{ib1} implies that
	 \begin{align*}
	 	\P\big(\rim{Z}^\rhoup{\rho}_{o_\alpha,x}>A_1tN^\frac{2}{3}\big)\leq C'_1\alpha^2 t^{-3}.
	 \end{align*}
	  Applying the change of variables $A_1t\mapsto t$, there exists $C_1(\xi)>0$ such that
	 \begin{align}
	 \P\big(\rim{Z}^\rhoup{\rho}_{o_\alpha,x}>tN^\frac{2}{3}\big)\leq C_1\alpha^2 t^{-3}.
	 \end{align}
	 Similarly we show that there exists $C_2(\xi)$ such that
	 \begin{align*}
	 \P\big(\rim{Z}^\rhoup{\rho}_{o_\alpha,x}<-tN^\frac{2}{3}\big)\leq C_2\alpha^2 t^{-3}.
	 \end{align*}
	 Setting  $C=C_1\vee C_2$  implies the result.
\end{proof}
 Define the sets
\begin{align*}
\partial^{\alpha}&= \Big\{\alpha \xi N-ie_1\Big\}_{0\leq i \leq \alpha \xi_1 N}\bigcup \Big\{\alpha \xi N-ie_2\Big\}_{0\leq i \leq \alpha \xi_2 N}\\
\partial^{\alpha,t}_c&= \Big\{\alpha \xi N-ie_1\Big\}_{0\leq i \leq t N^\frac{2}{3}}\bigcup \Big\{\alpha \xi N-ie_2\Big\}_{0\leq i \leq t N^\frac{2}{3}}\\
\partial^{\alpha,t}_f&=\partial^{\alpha}\setminus \partial^{\alpha,t}_c.
\end{align*}
In words, $\partial^{\alpha}$ is the north-east boundary of $\Ra$, $\partial^{\alpha,t}_c$ are all the points in $\partial^{\alpha}$ whose $l_1$ distance from $o_\alpha$ is less or equal to $t N^\frac{2}{3}$ while $\partial^{\alpha,t}_f$ are the set of points in $\partial^{\alpha}$ whose $l_1$ distance from $o_\alpha$ is larger or equal to $t N^\frac{2}{3}$. Let $\rhoup{\pi}^{q^1,\ct}$ and  $\rhodown{\pi}^{q^1,\ct}$ be the geodesics that start from $q^1$ and terminate at $\ct$, associated to $\rim{G}^\rhoup{\rho}$ and $\rim{G}^\rhodown{\rho}$ respectively. 
Define
\begin{align}
	\cB^{r,\alpha,t}=\big\{\rhoup{\pi}^{q^1,\ct}\cap \partial^{\alpha,t}_f = \emptyset\big\}\cap \big\{\rhodown{\pi}^{q^1,\ct}\cap \partial^{\alpha,t}_f = \emptyset\big\}
\end{align}
The superscript $r$ in $\cB^{r,\alpha,t}$ appears implicitly in $\rhoup{\rho},\rhodown{\rho}$. The following result shows that with high probability the geodesics $\rhoup{\pi}^{q^1,\ct}$ and  $\rhodown{\pi}^{q^1,\ct}$ will not wonder too far from the point $o_\alpha$.
\begin{corollary}
	Fix $\xi\in \ri\Uset$, $0<\alpha<1$ and $r>0$. There exists $C(\xi)>0$ such that for $t>\alpha r$ and $N>N_0(\xi,r)$
	\begin{align}
		&\P\big((\cB^{r,\alpha,t})^c\big)\leq C\alpha^2 t^{-3}\label{iub}.
	\end{align}
\end{corollary}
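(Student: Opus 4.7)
The plan is to combine a union bound with an induced-LPP identification of the event, reducing to a direct tail bound on the exit variable of the stationary LPP rooted at $o_\alpha$ that is already available from Lemma \ref{lem:zub}. From
\begin{align*}
\P\big((\cB^{r,\alpha,t})^c\big) \leq \P\big(\rhoup{\pi}^{q^1,\ct}\cap \partial^{\alpha,t}_f \neq \emptyset\big) + \P\big(\rhodown{\pi}^{q^1,\ct}\cap \partial^{\alpha,t}_f \neq \emptyset\big),
\end{align*}
it suffices to control each summand by a constant multiple of $\alpha^2 t^{-3}$; by symmetry I treat only the $\rhoup{\rho}$ case.

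Since $\ct \leq o_\alpha \leq q^1$, the geodesic $\rhoup{\pi}^{q^1,\ct}$ must enter the rectangle $\Ra = [\ct, o_\alpha]$ through its north-east boundary $\partial^\alpha$. Let $\rim{G}^{\rhoup{\rho},[q^1]}_{o_\alpha,\cdot}$ denote the LPP induced at $o_\alpha$ by $\rim{G}^{\rhoup{\rho}}_{q^1,\cdot}$, with boundary weights defined in the reversed analogue of \eqref{be4}. By the reversed analogue of Lemma \ref{app-lm1}, the restriction of $\rhoup{\pi}^{q^1,\ct}$ to $o_\alpha+\Z^2_{\leq 0}$ is contained in the geodesic of this induced process, whose boundary portion along $\partial^\alpha$ terminates at the induced exit point $o_\alpha - |\rim{Z}^{[q^1]}_{o_\alpha,\ct}|\, e_i$, with $i=1$ if $\rim{Z}^{[q^1]}_{o_\alpha,\ct}>0$ and $i=2$ otherwise. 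This gives the event identity
\begin{align*}
\big\{\rhoup{\pi}^{q^1,\ct}\cap \partial^{\alpha,t}_f \neq \emptyset\big\} = \big\{|\rim{Z}^{[q^1]}_{o_\alpha,\ct}| > tN^{2/3}\big\}.
\end{align*}

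By Burke's property for the stationary exponential corner growth model, the boundary increments of $\rim{G}^{\rhoup{\rho}}_{q^1,\cdot}$ along the north and east sides of $o_\alpha + \Z^2_{\leq 0}$ are independent exponentials with the same $\rhoup{\rho}$-marginals as in \eqref{IJ hat}, independent of the bulk weights in $o_\alpha + \Z^2_{<0}$. Therefore $\rim{G}^{\rhoup{\rho},[q^1]}_{o_\alpha,\cdot} \stackrel{d}{=} \rim{G}^{\rhoup{\rho}}_{o_\alpha,\cdot}$, and in particular $\rim{Z}^{[q^1]}_{o_\alpha,\ct} \stackrel{d}{=} \rim{Z}^{\rhoup{\rho}}_{o_\alpha,\ct}$. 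Lemma \ref{lem:zub} now gives $\P(|\rim{Z}^{\rhoup{\rho}}_{o_\alpha,\ct}|>tN^{2/3}) \leq C\alpha^2 t^{-3}$; the $\rhodown{\rho}$ term is handled identically, and summing the two bounds yields \eqref{iub}.

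The main technical point to verify is the event identity in the second paragraph. Typically the full geodesic enters $\Ra$ at some point $v \in \partial^\alpha$ whose $\ell^1$-distance $j$ from $o_\alpha$ satisfies $0 \leq j \leq |\rim{Z}^{[q^1]}_{o_\alpha,\ct}|$; from $v$ it then continues along $\partial^\alpha$ all the way to the induced exit point before turning into the interior. Consequently, whether $\rhoup{\pi}^{q^1,\ct}$ visits $\partial^{\alpha,t}_f$ is controlled precisely by whether its farthest boundary point, at distance $|\rim{Z}^{[q^1]}_{o_\alpha,\ct}|$ from $o_\alpha$, lies in $\partial^{\alpha,t}_f$, which gives the stated equality of events.
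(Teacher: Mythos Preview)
Your proof is correct and follows essentially the same approach as the paper: a union bound, the identification of $\{\rhoup{\pi}^{q^1,\ct}\cap \partial^{\alpha,t}_f \neq \emptyset\}$ with the exit-point event $\{|\rim{Z}^{\rhoup{\rho}}_{o_\alpha,\ct}|\ge tN^{2/3}\}$, and then Lemma~\ref{lem:zub}. The only difference is that the paper asserts this event identity directly (because in its Busemann-function construction the induced LPP at $o_\alpha$ \emph{is} the stationary LPP $\rim{G}^{\rhoup{\rho}}_{o_\alpha,\cdot}$ pathwise), whereas you route through the induced process and a distributional equality via Burke; this extra detail is harmless and in fact clarifies what the paper leaves implicit.
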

\begin{proof}
	Note that 
	\begin{align}\label{es}
		&\Big\{\rhoup{\pi}^{q^1,\ct}\cap \partial^{\alpha,t}_f \neq \emptyset\Big\}=\Big\{|\rim{Z}^\rhoup{\rho}_{o_\alpha,\ct}|\geq  tN^\frac{2}{3}\Big\}\\
		&\Big\{\rhodown{\pi}^{q^1,\ct}\cap \partial^{\alpha,t}_f \neq \emptyset\Big\}=\Big\{|\rim{Z}^\rhodown{\rho}_{o_\alpha,\ct}|\geq  tN^\frac{2}{3}\Big\}\label{es2}.
	\end{align}
	Taking probabilities on both sides of \eqref{es} and \eqref{es2},  using Lemma \ref{lem:zub} and union bound we obtain \eqref{iub}. 
\end{proof}
Define the sets
\begin{align*}
\cD^{r,\alpha,t}_1&=\{B^{\rhoup{\xi}}_{o_\alpha-ke_1,o_\alpha-(k-1)e_1}=B^{\rhodown{\xi}}_{o_\alpha-ke_1,o_\alpha-(k-1)e_1}\}
\qquad\text{for $1 \leq k \leq t N^\frac{2}{3}$}\\
\cD^{r,\alpha,t}_2&=\{B^{\rhoup{\xi}}_{o_\alpha-ke_2,o_\alpha-(k-1)e_2}=B^{\rhodown{\xi}}_{o_\alpha-ke_2,o_\alpha-(k-1)e_2}\}
\qquad\text{for $1 \leq k \leq t N^\frac{2}{3}$}\\
\cD^{r,\alpha,t}&=\cD_1^{r,\alpha,t}\cap \cD_2^{r,\alpha,t},
\end{align*}
where the superscript $r$ is implicit in $\rhoup{\xi},\rhodown{\xi}$ (\eqref{xibar}).
\begin{lemma}
	For every $\xi\in\ri\Uset$ and $0<\alpha<1$, there exists $C(\xi)>0$ so that for every $r\geq 1$ and $t\leq r^{-2}$  there exists $N_0(r)>0$ such that for $N\geq N_0$
	\begin{align}\label{Dub}
	\P\Big((\cD^{r,\alpha,t})^c\Big)\leq Ct^{\frac{1}{2}}r.
	\end{align}
\end{lemma}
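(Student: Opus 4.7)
The argument closely parallels Lemma \ref{lem:C1ub} and Proposition \ref{prop:Cub}, with the boundary segment length $\xi_i M$ replaced by $tN^{2/3}$ and a rescaled choice of the exponential tilt $\theta$. Using the coupling in Lemma \ref{Bus cop}(ii) and the computation leading to \eqref{C1}--\eqref{C2}, I would identify
\[
\P((\cD_1^{r,\alpha,t})^c) = \nu^{1-\rhoup{\rho},1-\rhodown{\rho}}\Big(\sum_{i=1}^{tN^{2/3}} e_i > 0\Big), \qquad \P((\cD_2^{r,\alpha,t})^c) = \nu^{\rhodown{\rho},\rhoup{\rho}}\Big(\sum_{i=1}^{tN^{2/3}} e_i > 0\Big),
\]
and then apply the Doob-type bound \eqref{tub} with $\xi_1 M$ replaced by $tN^{2/3}$. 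The union bound $\P((\cD^{r,\alpha,t})^c)\leq \P((\cD_1^{r,\alpha,t})^c) + \P((\cD_2^{r,\alpha,t})^c)$ then reduces the proof to showing each of these queueing probabilities is at most $C r t^{1/2}$.

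The crux --- and the main difference from Lemma \ref{lem:C1ub} --- is the choice of the tilt parameter. Rather than letting $\theta$ depend on the segment length, I would take $\theta = sN^{-1/3}$ with $s = \rho/\sqrt{t}$; the hypothesis $t\leq r^{-2}$ ensures $r\sqrt{t}\leq 1$, which keeps the relevant quantities bounded. Substituting into \eqref{tub}, the square $(rN^{-1/3}+\theta)^2$ equals $(r+s)^2 N^{-2/3}$, which for $N\geq N_0(r)$ and $t\geq CN^{-2/3}$ is at most $3\rho^2/4$, so $\rho^2 - (rN^{-1/3}+\theta)^2 \geq \rho^2/4$. The bracket raised to the power $tN^{2/3}$ is then at most $\exp\bigl(\tfrac{4(2rs+s^2)t}{\rho^2}\bigr)$, and using $s^2 t = \rho^2$ together with $2rst = 2r\rho\sqrt{t}\leq 2\rho$, this is bounded by a $\rho$-dependent constant. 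The remaining factor $(1+2\theta r^{-1}N^{1/3})^{-1} = (1+2\rho/(r\sqrt{t}))^{-1}$ is bounded above by $r\sqrt{t}/(2\rho)$, and the leading term $\tfrac{2rN^{-1/3}}{\rho+rN^{-1/3}}$ is of order $rN^{-1/3}$, which is in turn $O(rt^{1/2})$ whenever $t\geq N^{-2/3}$. Multiplying these contributions yields the claimed bound of order $rt^{1/2}$.

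The main obstacle is the small-$t$ regime where the chosen tilt is inadmissible, i.e.\ where $\theta < \rhodown{\rho}$ fails (this amounts to $1/\sqrt{t} < N^{1/3} - r/\rho$, which needs $t \gtrsim N^{-2/3}$). I would handle this separately by observing that for $t < N^{-2/3}$ the integer set $\{1,\dots, tN^{2/3}\}$ is empty and the events $\cD_i^{r,\alpha,t}$ hold trivially. For $N^{-2/3} \leq t < CN^{-2/3}$ only a bounded number of edges are involved; a union bound over single-edge disagreement probabilities, each of order $rN^{-1/3}$ since $\P(e_i>0)$ is controlled by the stationary probability of an empty queue $1-\rhodown{\rho}/\rhoup{\rho} = O(rN^{-1/3})$, gives a bound of order $rN^{-1/3} \leq C rt^{1/2}$, matching the main estimate and completing the argument.
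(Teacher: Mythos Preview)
Your proposal is correct and follows essentially the same route as the paper: reduce via Lemma~\ref{Bus cop}(ii) to the queueing probabilities $\nu^{\rhodown{\rho},\rhoup{\rho}}\bigl(\sum_{i=1}^{tN^{2/3}} e_i>0\bigr)$ and $\nu^{1-\rhoup{\rho},1-\rhodown{\rho}}\bigl(\sum_{i=1}^{tN^{2/3}} e_i>0\bigr)$, then apply the bound \eqref{tub} with tilt $\theta$ of order $t^{-1/2}N^{-1/3}$. The paper uses $\theta=t^{-1/2}N^{-1/3}$ and simply sends $N\to\infty$ with $t$ fixed; your choice $\theta=\rho\,t^{-1/2}N^{-1/3}$ differs only by the harmless constant $\rho$, and your explicit control of each factor reproduces the same asymptotic estimate $e^{O_\rho(1)}\cdot O(r\sqrt t)$.

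The one genuine addition in your argument is the treatment of the small-$t$ regime to secure $N_0=N_0(r)$ rather than $N_0(r,t)$: the paper's ``send $N\to\infty$'' step tacitly requires $N$ large depending on $t$ (so that $\theta<\rhodown{\rho}$), while you split off $t<N^{-2/3}$ (empty sum) and $N^{-2/3}\le t<CN^{-2/3}$ (finitely many edges, each disagreeing with probability at most $\P(w_i=0)=1-\rhodown{\rho}/\rhoup{\rho}=O(rN^{-1/3})\le O(r\sqrt t)$). This is a correct and useful refinement; for the paper's application in the proof of Theorem~\ref{thm:ubc}, where $t=\alpha^{16/27}$ is fixed once $\alpha$ is, the issue is moot.
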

\begin{proof}
	We show \eqref{Dub} for $\cD^{r,\alpha,t}_2$ the result then follows by union bound. As in \eqref{C2} we have
	\begin{align}\label{D2ub}
		\P\Big(\big(\cD^{r,\alpha,t}_2\big)^c\Big)=\nu^{\rhodown{\rho},\rhoup{\rho}}\Big(\sum_{i=1}^{tN^\frac{2}{3}} e_i>0\Big).
	\end{align}
	Using \eqref{tub} with $\theta=t^{-\frac{1}{2}}N^{-\frac{1}{3}}$
	\begin{align}\label{qub}
		\nu^{\rhodown{\rho},\rhoup{\rho}}\Big(\sum_{i=1}^{tN^\frac{2}{3}} e_i>0\Big)\leq \frac{2rN^{-\frac{1}{3}}}{\rho+rN^{-\frac{1}{3}}}+\frac{\rho-rN^{-\frac{1}{3}}}{\rho+rN^{-\frac{1}{3}}}\Bigg[1+\frac{\big(2rt^{\frac{1}{2}}+1\big)t^{-1}N^{-\frac{2}{3}}}{\rho^2-(r^2N^{-\frac{2}{3}}+2rt^{-\frac{1}{2}}N^{-\frac{2}{3}}+t^{-1}N^{-\frac{2}{3}})}\Bigg]^{tN^\frac{2}{3}}\Big(1+2t^{-\frac{1}{2}} r^{-1}\Big)^{-1}
	\end{align}
	Sending $N$ to $\infty$, the right hand site of \eqref{qub} converges to
	\begin{align}\label{qub2}
	 e^{\rho^{-2}\big(2rt^{\frac{1}{2}}+1\big)}\Big(1+2t^{-\frac{1}{2}} r^{-1}\Big)^{-1}\leq e^{\rho^{-2}\big(2rt^{\frac{1}{2}}+1\big)}\frac{1}{2}t^{\frac{1}{2}} r.
	\end{align}
	Plugging \eqref{qub2} in \eqref{D2ub}, by our assumption on $t$, $rt^{\frac{1}{2}}\leq 1$, and so we see that there exists $C_2(\xi)>0$
	such that for every $r\geq 1$, there exists $N_0(r)>0$ such that for $N\geq N_0$
	\begin{align*}
	\P\Big((\cD_2^{r,\alpha,t})^c\Big)\leq C_2t^{\frac{1}{2}}r.
	\end{align*}
	Similar bound can be obtained for $\cD_2^{r,\alpha,t}$ the result then follows by union bound.
\end{proof}
\begin{figure}[t]
	\includegraphics[scale=1]{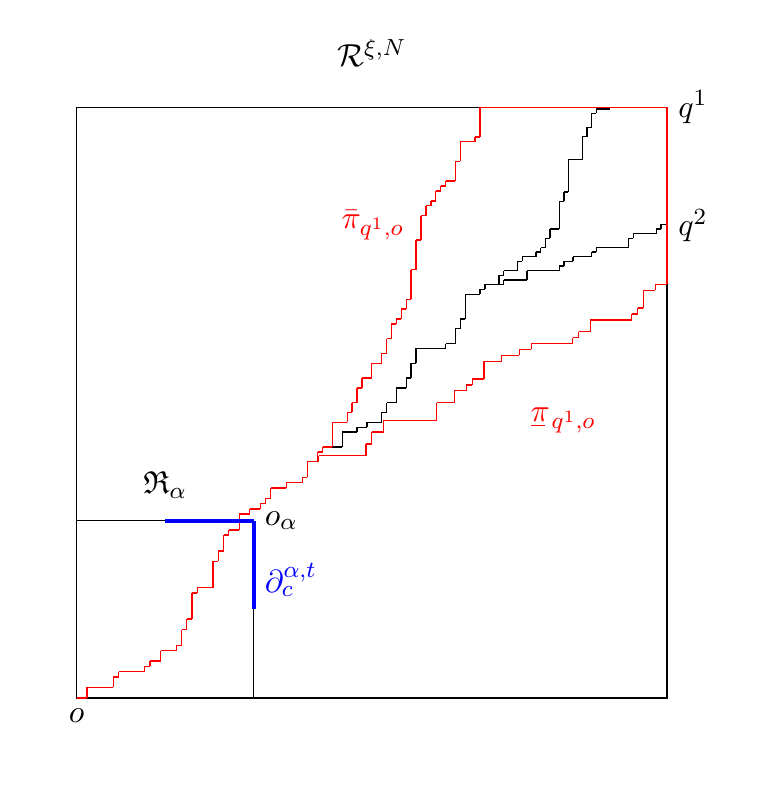}
	\caption{\small With high probability the geodesics $\rhoup{\pi}_{q^1,o}$ and $\rhodown{\pi}_{q^1,o}$ sandwich the geodesics $\pi_{q^1,o}$ and $\pi_{q^2,o}$. The stationary geodesics (in red) use the same weights on edges in $\mathcal{E}(\partial^{\alpha,t}_c)$.}\label{fig:ubc} 
\end{figure}
\begin{proof}[Proof of Theorem \ref{thm:ubc}]
 We first claim that on the event $\rim{A}^{r} \cap \cB^{r,\alpha,t}\cap \cD^{r,\alpha,t}$ the geodesics $\pi^{q^1,o}$ and $\pi^{q^2,o}$ must coalesce outside $\Ra$ (see Figure \ref{fig:ubc}). On the event $\rim{A}^{r}$
	\begin{align}
		\rhoup{\pi}_{q^1,o}\og \pi^{q^1,o} \og \pi^{q^2,o} \og \rhodown{\pi}_{q^1,o}.
	\end{align}
	This means that coalescence of the geodesics $\rhoup{\pi}_{q^1,o}$ and $\rhodown{\pi}_{q^1,o}$ outside $\Ra$ implies the coalescence of the geodesics $\pi^{q^1,o}$ and $\pi^{q^2,o}$ outside $\Ra$. It is therefore enough to show that on the set $\cB^{r,\alpha,t}\cap \cD^{r,\alpha,t}$ 
	\begin{align}\label{cog}
		\pr^{\xi,\alpha N}(\rhoup{\pi}_{q^1,o})=	\pr^{\xi,\alpha N}(\rhodown{\pi}_{q^1,o}).
	\end{align}
	On the event $\cB^{r,\alpha,t}$ the geodesics $\rhoup{\pi}_{q^1,o}$ and $\rhodown{\pi}_{q^1,o}$ do not cross $\partial^{\alpha,t}_f$ and therefore use only the weights $B^\rhoup{\rho}_e,B^\rhodown{\rho}_e$ where $e\in\mathcal{E}(\partial^{\alpha,t}_c)$ and the bulk weights $\{\omega_x\}_{x\in\Ra}$. It follows that on $\cB^{r,\alpha,t}\cap \cD^{r,\alpha,t}$ \eqref{cog} holds. 	Set $r=\alpha^{-\frac{2}{27}},t=\alpha^\frac{16}{27}$ so that $t=\alpha^{\frac{16}{27}}\geq \alpha^{\frac{25}{27}}=\alpha r$ holds (since $0<\alpha<1$). Use \eqref{ub3}, \eqref{iub} and \eqref{Dub} to see that there exists $C'(\xi,a)>0$ such that
	\begin{align*}
		&\P(|\ct-\pc|\leq (\xi_1\wedge\xi_2) \alpha N)\leq \P(\pc \in \Ra)\\
		&\leq  \P\Big((\rim{A}^{\alpha^{-\frac{2}{27}}})^c\Big)+\P\Big((\cB^{\alpha^{-\frac{2}{27}},\alpha,\alpha^{\frac{16}{27}}})^c\Big)+\P\Big((\cD^{\alpha^{-\frac{2}{27}},\alpha,\alpha^{\frac{16}{27}}})^c\Big)\\
		&\leq C'\Big(\alpha^{\frac{2}{9}}+\alpha^{\frac{2}{9}}+\alpha^{\frac{2}{9}}\Big).
	\end{align*}
	The result now follows.
\end{proof}
\subsection{Upper bound on $\P(|q^2-\pc|\leq \alpha N)$} 
For every $\xi\in \ri\Uset$ and $m\in \Z$, define the set
\begin{align*}
&\cC^{\xi+}_{m,t}=\{m\}\times \{y\in\Z:m\nicefrac{\xi_2}{\xi_1}-y\leq tN^{\frac{2}{3}}\}\\
&\cC^{\xi-}_{m,t}=\{m\}\times \{y\in\Z:m\nicefrac{\xi_2}{\xi_1}-y\geq -tN^{\frac{2}{3}}\}.
\end{align*}
Let $\xi^1=\xi$ and $\xi^2=\xi-(0,aN^{-\frac{1}{3}})$ be two vectors whose direction  is that of the characteristics emanating from $\ct$ associated with the point $q^1$ and $q^2$ respectively. Let $\rhoup{\rho}=\rho(\xi)+rN^{-\frac{1}{3}}$, $\rhodown{\rho}=\rho(\xi)-rN^{-\frac{1}{3}}$ and consider  $G^\rhoup{\rho}_{o,x}$ and $G^\rhodown{\rho}_{o,x}$ on $\Rb^{\xi N}$ as in \eqref{Gr2}.  For $x\in o+\Z^2_{>0}$, let $\rhoup{\pi}^{o,x}$ and $\rhodown{\pi}^{o,x}$ be the geodesics associated with the last passage time $G^\rhoup\rho_{o,x}$ and $G^\rhodown\rho_{o,x}$ respectively. We shall need the following auxiliary result.
\begin{lemma}\label{lem:cg}
	Let $0<\alpha<1$ and $r\geq 1$. There exists $N_0(\xi,r),C(\xi),A(\xi)>0$ such that for $N>N_0$  and $t\geq A\alpha r$ 
	\begin{align}
		&\P\Big(\pi^{q^1,o}\in (\cC^{\xi^1+}_{(1-\alpha)\xi_1 N,t})^c\Big)\leq C(\alpha^{2}t^{-3}+r^{-3})\label{cub2}\\
		&\P\Big(\pi^{q^2,o}\in (\cC^{\xi^2-}_{(1-\alpha)\xi_1 N,t})^c\Big)\leq C(\alpha^{2}t^{-3}+r^{-3})\label{cub3}.
	\end{align}
\end{lemma}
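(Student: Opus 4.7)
The plan is to prove \eqref{cub2} in detail; \eqref{cub3} then follows by the symmetric argument in which $\pi^{q^2,\ct}$ is sandwiched from above by $\rhodown\pi^{q^2,\ct}$ and the shift is placed above $\ct$ instead of below. Write $o=(0,0)=q^1$, $m=(1-\alpha)\xi_1 N$ and $v=(m,\,m\xi_2/\xi_1-tN^{2/3})\in\Z^2$; the event $\{\pi^{q^1,\ct}\in(\cC^{\xi^1+}_{m,t})^c\}$ is precisely that the bulk geodesic from $q^1$ to $\ct$ crosses the column $\{x_1=m\}$ strictly below $v_2$.

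First I would perform the usual sandwich. On the event $\{Z^{\rhoup{\rho}}_{o,\ct}>0\}$ the stationary geodesic $\rhoup\pi^{o,\ct}$ follows the south boundary up to some exit point $(Z,0)$ and then follows the point-to-point geodesic $\pi^{(Z,0),\ct}$ in the bulk; combined with the standard non-crossing property under a rightward shift of the starting point this yields $\rhoup\pi^{o,\ct}\og\pi^{q^1,\ct}$, so
\begin{align*}
\P\bigl(\pi^{q^1,\ct}\text{ crosses col.\ }m\text{ below }v_2\bigr)\le\P\bigl(Z^{\rhoup{\rho}}_{o,\ct}\le 0\bigr)+\P\bigl(\rhoup\pi^{o,\ct}\text{ crosses col.\ }m\text{ below }v_2\bigr).
\end{align*}
The first summand is bounded by $Cr^{-3}$ by the forward analogue of Lemma~\ref{lem-lb1} (verbatim same proof with $\rim Z$ replaced by $Z$).

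For the second summand I would pass to the induced LPP $G^{\rhoup{\rho},[o]}_{v,\ct}$. Lemma~\ref{app-lm1} identifies ``$\rhoup\pi^{o,\ct}$ is strictly below $v_2$ at column $m$'' with $\{Z^{\rhoup{\rho},[o]}_{v,\ct}>0\}$, since being strictly below $v_2$ at column $m$ is precisely the condition that the induced geodesic enters $v+\Z_{\ge0}^2$ through its south boundary. Burke's theorem for stationary exponential LPP ensures that the horizontal increments of $G^{\rhoup{\rho}}_{o,\cdot}$ along $\{x_2=v_2\}$ to the east of $v$ are i.i.d.\ $\mathrm{Exp}(1-\rhoup{\rho})$, the vertical increments along $\{x_1=v_1\}$ to the north of $v$ are i.i.d.\ $\mathrm{Exp}(\rhoup{\rho})$, these two sequences are independent, and they are jointly independent of the bulk weights in $v+\Z_{>0}^2$; hence $G^{\rhoup{\rho},[o]}_{v,\ct}$ is itself a stationary LPP of density $\rhoup{\rho}$ with base $v$ and target $\ct$.

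Finally, a Taylor expansion of $\rho'\mapsto(\rho')^2/(1-\rho')^2$ at $\rho=\rho(\xi)$ gives
\begin{align*}
v_2+(\ct_1-v_1)\frac{\rhoup{\rho}^{\,2}}{(1-\rhoup{\rho})^2}-\ct_2=(C_1\alpha r-t)N^{2/3}+o(N^{2/3}),\qquad C_1=\frac{2\xi_1\rho}{(1-\rho)^3}.
\end{align*}
Choosing $A=2C_1$ in the hypothesis $t\ge A\alpha r$ forces the characteristic from $v$ in direction $\xi(\rhoup{\rho})$ to land at least $\tfrac12 tN^{2/3}$ below $\ct$. Since $|\ct-v|_1$ is of order $\alpha N$, this off-characteristic shift is of order $t\alpha^{-2/3}$ on the natural $(\alpha N)^{2/3}$-scale, so applying Corollary~5.10 of \cite{sepp-cgm-18} exactly as in Lemmas~\ref{lem-lb1}--\ref{lem:zub} produces $\P(Z^{\rhoup{\rho},[o]}_{v,\ct}>0)\le C\alpha^2 t^{-3}$ and finishes \eqref{cub2}. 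The step needing the most care is the Burke-theoretic identification of $G^{\rhoup{\rho},[o]}_{v,\ct}$ as a genuine stationary LPP at the interior point $v$; granted that, the rest is a direct application of the exit-point estimates already developed in the paper.
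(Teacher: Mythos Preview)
Your approach is essentially the same as the paper's: sandwich the bulk geodesic by the $\rhoup\rho$-stationary geodesic (paying $Cr^{-3}$ for the failure of the sandwich via the exit-point bound), then control the stationary geodesic's transversal position at column $(1-\alpha)\xi_1 N$ by passing to the induced stationary LPP at the shifted base point and applying \cite[Corollary~5.10]{sepp-cgm-18}. Your Burke-theoretic justification that $G^{\rhoup\rho,[o]}_{v,\ct}$ is again a genuine stationary LPP is exactly the point the paper leaves implicit, and your Taylor computation of the off-characteristic shift matches the paper's direct expansion.

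One slip to fix: the ordering you write is backwards. On $\{Z^{\rhoup\rho}_{o,\ct}>0\}$ the stationary geodesic first runs along the south boundary to $(Z,0)$ and therefore lies \emph{below} the bulk geodesic at every column; hence the correct relation is $\pi^{q^1,\ct}\og\rhoup\pi^{o,\ct}$ (bulk above-left of stationary), not the reverse. This is precisely what makes your displayed inequality
\[
\P\bigl(\pi^{q^1,\ct}\text{ below }v_2\bigr)\le\P\bigl(Z^{\rhoup{\rho}}_{o,\ct}\le 0\bigr)+\P\bigl(\rhoup\pi^{o,\ct}\text{ below }v_2\bigr)
\]
valid: if the bulk geodesic is below $v_2$ and the stationary geodesic is below the bulk geodesic, then the stationary geodesic is below $v_2$. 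With your stated direction the implication would fail. Once the ordering is corrected the proof is complete and coincides with the paper's.
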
 
\begin{proof}
	We prove only \eqref{cub2} as the proof of \eqref{cub3} is similar. 
	We would first like to show that there exist $N_0(\xi,r),C_1(\xi),A(\xi)>0$ such that for $N>N_0$  and $t\geq A\alpha r$  (see Figure \ref{fig:gd})
	\begin{align}\label{ubp}
		\P\Big(\rhoup{\pi}^{o,q^1}\in \big(\cC^{\xi+}_{(1-\alpha)\xi_1 N,t}\big)^c\Big) \leq C_1\alpha^{2}t^{-3} .
	\end{align}
	To see that \eqref{ubp} holds, let $u=\big((1-\alpha) \xi_1N,(1-\alpha) \xi_2N-tN^\frac{2}{3}\big)$ and consider  $G^{\rhoup{\rho},[o]}_{u,x}$. Note that
	\begin{align}\label{pze}
		\big\{\rhoup{\pi}^{o,q^1}\in \big(\cC^{\xi+}_{(1-\alpha)\xi_1 N,t}\big)^c\big\}=\big\{Z^{[o]}_{u,q^1}>0\big\}.
	\end{align}
	We compute
	\begin{align*}
		&\xi_2N-\big[(1-\alpha) \xi_2 N-tN^\frac{2}{3}+\frac{\rhoup{\rho}^2}{(1-\rhoup{\rho})^2}\alpha \xi_1N\big]\\
		&=\alpha\xi_2N-\frac{\rhoup{\rho}^2}{(1-\rhoup{\rho})^2}\alpha\xi_1N+tN^\frac{2}{3}\\
		&=\frac{\alpha\xi_2 N\big(1-2(\rho+rN^{-\frac{1}{3}})+(\rho+rN^{-\frac{1}{3}})^2\big)-\big(\rho^2+2\rho rN^{-\frac{1}{3}}+r^2N^{-\frac{2}{3}}\big)\alpha\xi_1N}{(1-\rhoup{\rho})^2}+tN^\frac{2}{3}\nonumber\\
		&=\frac{\alpha N\big(\xi_2(1-\rho)^2-\xi_1\rho^2\big)-2r\alpha N^\frac{2}{3}\big(\xi_1\rho +\xi_2(1-\rho)\big)+\alpha N^\frac{1}{3}r^2(\xi_2-\xi_1)}{(1-\rhoup{\rho})^2}+tN^\frac{2}{3}\nonumber\\
		&=\frac{tN^{\frac{2}{3}}\Big((1-\rhoup{\rho})^2-(1-\rho)^2+(1-\rho)^2-\frac{2r\alpha}{t} \big(\xi_1\rho +\xi_2(1-\rho)\big)+\frac{\alpha r^2}{t} N^{-\frac{1}{3}}(\xi_2-\xi_1)\Big)}{(1-\rhoup{\rho})^2}\nonumber.
	\end{align*}
	If 
	\begin{align*}
		t\geq \frac{5\alpha r(\xi_1\rho+\xi_2(1-\rho))}{(1-\rho)^2},
	\end{align*}
	then there for $C'(\xi)>0$
	\begin{align}\label{lh}
		\alpha\xi_2N-\frac{\rhoup{\rho}^2}{(1-\rhoup{\rho})^2}\alpha\xi_1N+tN^\frac{2}{3}&\geq \frac{tN^{\frac{2}{3}}\Big((1-\rhoup{\rho})^2-(1-\rho)^2+\frac{3}{5}(1-\rho)^2+ C'r N^{-\frac{1}{3}}(\xi_2-\xi_1)\Big)}{(1-\rhoup{\rho})^2},
	\end{align}
	so that there exists $N_0(\xi,r)$ such that for $N>N_0$ the left hand side of \eqref{lh} is greater of equal to 
	\begin{align*}
		\frac{tN^{\frac{2}{3}}\Big(\frac{1}{2}(1-\rho)^2\Big)}{(1-\rho)^2}\nonumber=\frac{1}{2}\alpha^{-\frac{2}{3}}t(\alpha N)^{\frac{2}{3}}.
	\end{align*}
	 It then follows by \cite{sepp-cgm-18}[Corollary 5.10] that  there exists a constant $C_1(\xi)>0$ such that such that for $N>N_0$
	\begin{align}\label{zub}
	\P\big(Z^{[o]}_{u,q^1}>0\big)\leq C_1\alpha^{2}t^{-3}.
	\end{align}
	\eqref{ubp} now follows from \eqref{zub}  using \eqref{pze}. Next we use \eqref{ubp} to obtain \eqref{cub2}. To see that, we first note that (similar to \eqref{lb-2}) there exists $C_2(\xi)>0$ such that
	\begin{align*}
		\P\big(Z^{\rhoup{\rho}}_{o,q^1}>0\big)\geq 1-C_2r^{-3}
	\end{align*}
	which implies that
	\begin{align}\label{og1}
		\P\big(\pi^{q^1,o}\og\rhoup{\pi}^{o,q^1})\geq 1-C_2r^{-3}.
	\end{align}
	Note that
	\begin{align}\label{sp}
		\{\pi^{o,q^1}\in \big(\cC^{\xi+}_{(1-\alpha)\xi_1 N,t}\big)^c\}\cap \big\{\pi^{q^1,o}\og\rhoup{\pi}^{o,q^1}\big\}\subset \{\rhoup{\pi}^{o,q^1}\in \big(\cC^{\xi+}_{(1-\alpha)\xi_1 N,t}\big)^c\}.
	\end{align}
	Taking probability in \eqref{sp} and using \eqref{ubp} and \eqref{og1} we arrive at \eqref{cub2}. 
\end{proof}
\begin{figure}[t]
	\includegraphics[scale=1]{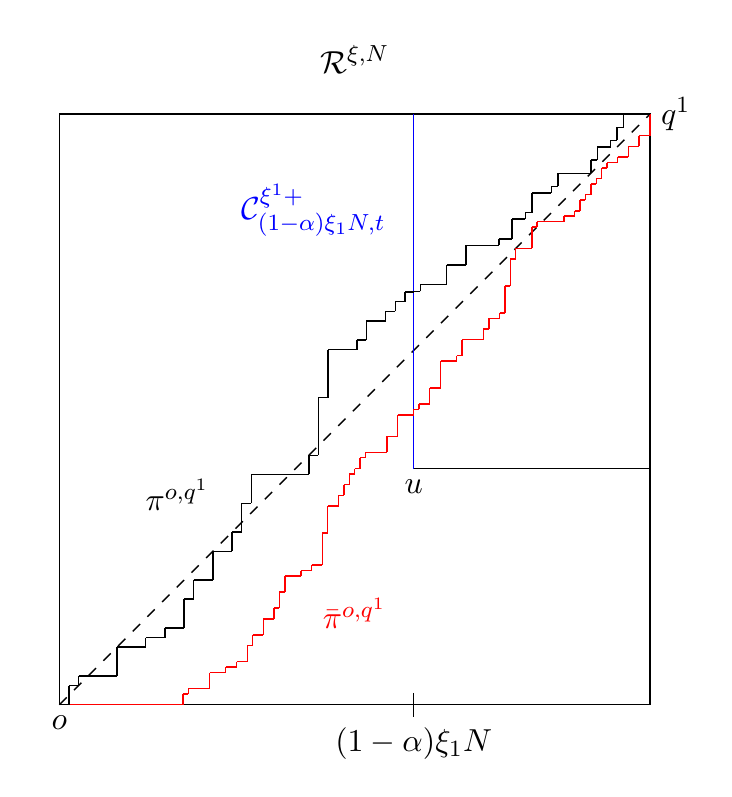}
	\caption{\small With high probability the geodesic $\rhoup{\pi}^{o,q^1}$ exits  from the south boundary of $\Rb^{\xi,N}$ and crosses the set $\cC^{\xi^1+}_{(1-\alpha)\xi_1 N,t}$.}
\end{figure}\label{fig:gd}
\begin{proof}[Proof of Theorem \ref{thm:ubc2}]
	Fix $0<\alpha<1$. Note that
	\begin{align*}
		 \Bigg(\frac{N\xi_2}{N\xi_1}-\frac{N\xi_2-aN^\frac{2}{3}}{N\xi_1}\Bigg)(1-\alpha) \xi_1 N=a(1-\alpha)N^\frac{2}{3}
	\end{align*}
	Let $t=\frac{a}{3}(1-\alpha)$ and  $r=\frac{t}{A}\alpha^{-\frac{2}{3}}$ where $A$ is the constant from Lemma \ref{lem:cg}, so that $t\geq A\alpha r=t\alpha^{\frac{1}{3}}$. By Lemma \ref{lem:cg}, there exists $C(\xi,a)>0$ 
	\begin{align}\label{aub}
	&\P\Big(\pi^{q^1,o}\in (\cC^{\xi^1+}_{(1-\alpha)\xi_1 N,t})^c\Big)\leq C\alpha^2\\
	&\P\Big(\pi^{q^2,o}\in (\cC^{\xi^2-}_{(1-\alpha)\xi_1 N,t})^c\Big)\leq C\alpha^2.\nonumber
	\end{align}
	 Let $p_1=\inf\{y:((1-\alpha)\xi_1 N,y)\in \pi^{q^1,o}\}$ and $p_2=\sup\{y:((1-\alpha)\xi_1 N,y)\in \pi^{q^2,o}\}$ be the lowest and highest  intersection points of the vertical line at $(1-\alpha)\xi_1 N$ with $\pi^{q^1,o}$ and $\pi^{q^2,o}$ respectively. \eqref{aub} implies that
	\begin{align*}
		\P\Big(p_1-p_2< \frac{a}{3}(1-\alpha)N^\frac{2}{3}\Big)\leq 2C\alpha^2.
	\end{align*}
	It follows that
	\begin{align}\label{ub1}
		\P\Big(p_c\in [\big((1-\alpha)\xi_1 N,0\big),\xi N]\Big)\leq 2C\alpha^2.
	\end{align}
	\eqref{ub1} implies the result.
	\begin{figure}[t]
		\includegraphics[scale=1]{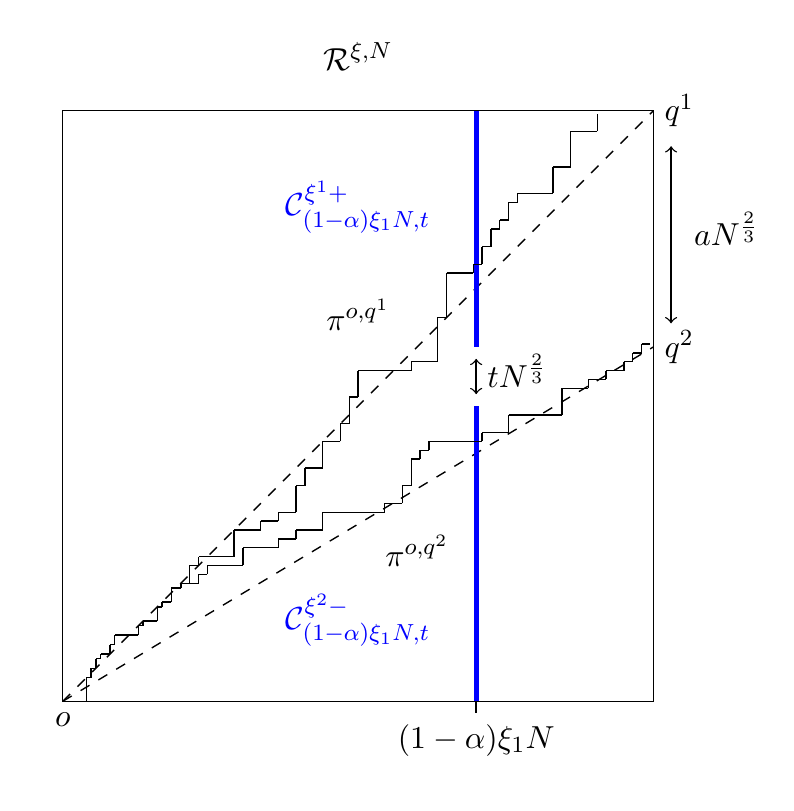}
		\caption{\small With high probability the geodesic $\pi^{o,q^1}$ crosses the vertical line at $(1-\alpha)\xi_1N$ no too far below the characteristic $\xi^1$  while $\pi^{o,q^2}$ crosses not too far above the characteristic $\xi^2$.}
	\end{figure}\label{fig:cg}
	
\end{proof}
\newpage
\appendix

\section{Queues} \label{app:queues}

We formulate  last-passage percolation over a bi-infinite strip as  a   queueing operator.   
The inputs are   two bi-infinite sequences: the {\it inter-arrival process}   $\arrv=(\arr_j)_{j\in\Z}$ and the {\it service process} $\servv=(\serv_j)_{j\in\Z}$.  
The queueing  interpretation is that $\arr_j$ is the time between the arrivals of customers $j-1$ and $j$ and    $\serv_j$ is the service time of customer $j$.  The operations below are well-defined as long as $\lim_{m\to-\infty}  \sum_{i=m}^0 (\serv_i-\arr_{i+1})=-\infty.$  

From  inputs $(\arrv,\servv)$  three output sequences 
\be\label{DSR}   \depav=\Dop(\arrv,\servv),  \quad \sojov=\Sop(\arrv,\servv), \quad\text{and} \quad \wc \servv=\Rop(\arrv,\servv) \ee
are constructed through explicit mappings: the {\it inter-departure process} $\depav=(\depa_j)_{j\in\Z}$, the {\it sojourn process} $\sojov=(\sojo_j)_{j\in\Z}$, and the {\it dual service times}  $\wc\servv=(\wc\serv_j)_{j\in\Z}$.  

The  formulas are as follows.  Choose a sequence  $G=(G_j)_{j\in\Z}$  that satisfies $\arr_j=G_j-G_{j-1}$.   Define the sequence  $\wt G=(\wt G_j)_{j\in\Z}$ by 
\be\label{m:800}
\wt G_j=\sup_{k:\,k\le j}  \Bigl\{  G_k+\sum_{i=k}^j \serv_i\Bigr\}. 
\ee
The supremum above  is taken  at some finite $k$.   Then set 
\be\label{DSR4} 
\depa_j =  \wt G_j - \wt G_{j-1}, \quad 
\sojo_j=\wt G_j -  G_j, \quad\text{and} \quad 
\wc \serv_j=\arr_j\wedge \sojo_{j-1}. \ee
The outputs \eqref{DSR4} are independent of the choice of $G$. Note that to compute $\{ \depa_j , \sojo_j, \wc \serv_j: j\le m\}$,  only inputs $\{ \arr_j , \serv_j : j\le m\}$ are needed. Let $\arrv=(\arr_j)_{j\in\Z}$ and $\servv=(\serv_j)_{j\in\Z}$ be two independent sequences of i.i.d. Exponential r.v. of intensity $\lambda$ and $\rho$ respectively, where $0<\lambda<\rho<1$. We denote by $\nu^{\lambda,\rho}$ the distribution of $(D(\arrv,\servv),\servv)$ on $\R_+^\Z\times \R_+^\Z$, i.e.
\begin{align}\label{qd}
	\nu^{\lambda,\rho}\sim (D(\arrv,\servv),\servv).
\end{align}  
By Burke's Theorem $D(\arrv,\servv)$ is sequences of i.i.d. Exponential r.v. of intensity $\lambda$, consequently, the measure $	\nu^{\lambda,\rho}$ is referred to as a stationary measure of the queue. The waiting time of the $j$'th customer is given by 
\begin{align}\label{w}
	w_j=\sup_{i\leq j}\Big(\sum_{k=i}^{j}s_{k-1}-a_k\Big)^+.
\end{align}
The random variables $\{w_j\}_{j\in\Z}$ satisfy 
\begin{align}\label{wr}
	w_j=\big(w_{j-1}+s_{j-1}-a_j\big)^+.
\end{align}
The distribution of $w_0$ (and by stationarity the distribution of any $w_j$ for $j\in \Z$) is given by
\begin{align}\label{des}
	f_w(dw)=\P(w_0\in dw)=\big(1-\frac{\lambda}{\rho}\big)\delta_0(dw)+\frac{(\rho-\lambda)\lambda}{\rho}e^{-(\rho-\lambda)w}dw.
\end{align}
One can write 
\begin{align}
	d_j=e_j+s_{j},
\end{align}
where $e_j$ is called the $j$'th \textit{idle time} and is given by
\begin{align}\label{e}
	e_j=(w_{j-1}+s_{j-1}-a_j)^-.
\end{align}
$e_j$ is the time between the departure of customer $j-1$ and the arrival of customer $j$ in which the sever is idle. Define 
\begin{align*}
	x_j=s_{j-1}-a_j,
\end{align*}
and the summation operator
\begin{align}\label{S}
	S^{k,l}=\sum_{i=k}^lx_i
\end{align}
Summing $e_j$ we obtain the cumulative idle time \cite{whit}[Chapter 9.2, Eq. 2.7] as in the following lemma.
\begin{lemma}
	For any $k\leq l$
\begin{align}\label{sume}
	\sum_{i=k}^l e_i=\Big(\inf_{k\leq i\leq l}w_{k-1}+S^{k,i}\Big)^-.
\end{align}
\begin{proof}
	For simpler exposition we set $k=1$. By \eqref{w}
	\begin{align*}
		w_l&=\Big(\sup_{1\leq i\leq l}S^{i,l}\Big)^+ \vee \Big(\sup_{-\infty<i\leq 0}S^{i,l}\Big)^+\\
		&=\Big(\sup_{1\leq i\leq l}S^{i,l}\Big)^+ \vee \Big(\sup_{-\infty<i\leq 0}S^{i,0}+S^{1,l}\Big)^+\\
		&=\Big(\sup_{1\leq i\leq l}S^{i,l}\Big)^+ \vee \Big(\big(\sup_{-\infty<i\leq 0}S^{i,0}\big)^++S^{1,l}\Big)^+.
	\end{align*}
	To see that the last equality holds, note that
	\begin{align*}
		w_l=
		\begin{cases}
		\Big(\sup_{1\leq i\leq l}S^{i,l}\Big)^+ \quad &\sup_{-\infty<i\leq 0}S^{i,0}<0\\
		\Big(\sup_{1\leq i\leq l}S^{i,l}\Big)^+ \vee \Big(\big(\sup_{-\infty<i\leq 0}S^{i,0}\big)^++S^{1,l}\Big)^+ \quad &\sup_{-\infty<i\leq 0}S^{i,0}\geq 0.
		\end{cases}
	\end{align*}
	It follows that
	\begin{align}\label{sume2}
		w_l&=\Big(\sup_{1\leq i\leq l}S^{i,l}\Big)^+ \vee \big(S^{1,l}+w_0\big)^+\\\nonumber
		&=\Big(\sup_{2\leq i \leq l}\big[S^{1,l}-S^{1,i-1}\big]\Big)^+\vee \big(S^{1,l}+w_0\big)^+\\\nonumber
		&=\Big(S^{1,l}-\inf_{1\leq i \leq l-1}S^{1,i}\Big)^+ \vee \big(S^{1,l}+w_0\big)^+\\\nonumber
		&=\Big(S^{1,l}+w_0-\inf_{0\leq i \leq l-1}\big[w_0+\hat{S}^{1,i}\big]\Big)^+ 
	\end{align}
	where
	\begin{align*}
		\hat{S}^{1,i}=
		\begin{cases}
		S^{1,i} \quad i>0\\
		-w_0 \quad i=0
		\end{cases},
	\end{align*}
	where the last equality in \eqref{sume2} follows since
	\begin{align*}
		\Big(S^{1,l}+w_0-\big[w_0+\hat{S}^{1,0}\big]\Big)^+=\big(S^{1,l}+w_0\big)^+.
	\end{align*}
	 It follows that
	\begin{align}\label{ew2}
		w_l=S^{1,l}+w_0-\inf_{0\leq i \leq l}\big[w_0+\hat{S}^{1,i}\big].
	\end{align}
	where we dropped the positive part because 
	\begin{align*}
	w_l\geq S^{1,l}+w_0-\big[w_0+\hat{S}^{1,l}\big]=0.
	\end{align*}
	By \eqref{wr} and \eqref{e} we see that
	\begin{align}\label{ew}
		-e_j+w_j&=w_{j-1}+s_{j-1}-a_j\\
		\implies e_j&=w_j-w_{j-1}-x_j.\nonumber
	\end{align}
	Summing on both sides of \eqref{ew} and using \eqref{ew2}
	\begin{align*}
		\sum_{i=1}^le_i&=w_l-w_0-S^{1,l}\\
		&=-\inf_{0\leq i \leq l}w_0+\hat{S}^{1,i}\\
		&=\Big(\inf_{1\leq i \leq l}w_0+S^{1,i}\Big)^-.
	\end{align*} 
\end{proof}
\end{lemma}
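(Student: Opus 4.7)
The plan is to view the pair $(w_j, e_j)$ as a one-sided Skorokhod reflection at zero driven by the random walk $S^{k,\bbullet}$, and read off the standard identity relating the accumulated local time at zero to the running minimum of the driving walk.

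First, I would combine \eqref{wr} and \eqref{e}: since $w_j=(w_{j-1}+x_j)^+$ and $e_j=(w_{j-1}+x_j)^-$, the elementary identity $a^+-a^-=a$ gives the one-step relation
\[
 w_j \;=\; w_{j-1} + x_j + e_j, \qquad w_j\geq 0,\ e_j\geq 0,\ w_j\cdot e_j=0.
\]
Telescoping this identity from $i=k$ to $i=l$ and using $S^{k,l}=\sum_{i=k}^l x_i$ yields
\[
 \sum_{i=k}^l e_i \;=\; w_l - w_{k-1} - S^{k,l},
\]
so the problem reduces to evaluating $w_l-w_{k-1}-S^{k,l}$ in closed form.

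Next, I would iterate the reflection $w_j=(w_{j-1}+x_j)^+$ starting from the boundary value $w_{k-1}\geq 0$, and prove by a short induction on $l\geq k-1$ the representation
\[
 w_l \;=\; \bigl(w_{k-1}+S^{k,l}\bigr) \;\vee\; \max_{k\leq j\leq l} S^{j+1,l},
\]
with the convention $S^{l+1,l}=0$. The inductive step is a one-liner: apply $(\cdot)^+$ to $w_{l-1}+x_l$ and distribute $x_l$ inside the maximum using $\max(a,b)+c=\max(a+c,b+c)$, together with $(u)^+ = u\vee 0 = u \vee S^{l+1,l}$. Using $S^{j+1,l}=S^{k,l}-S^{k,j}$ this representation rewrites as
\[
 w_l - S^{k,l} \;=\; w_{k-1} \;\vee\; \max_{k\leq j\leq l}\bigl(-S^{k,j}\bigr),
\]
and subtracting $w_{k-1}$, since $w_{k-1}\geq 0$ makes the left side a positive part,
\[
 w_l-w_{k-1}-S^{k,l} \;=\; \Bigl(\max_{k\leq j\leq l}\bigl(-w_{k-1}-S^{k,j}\bigr)\Bigr)^{+} \;=\; \Bigl(\min_{k\leq j\leq l}\bigl(w_{k-1}+S^{k,j}\bigr)\Bigr)^{-}.
\]
Combining this with the telescoped identity from the first step gives exactly \eqref{sume}.

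There is no real analytic obstacle: the only non-bookkeeping step is the induction formula for $w_l$, which is the classical one-sided reflection representation for a random walk reflected at zero with initial height $w_{k-1}$. The whole proof is a direct application of the discrete Skorokhod decomposition, so the main effort is just keeping the index conventions (empty sums, the $j=k-1$ endpoint, the sign in the negative part) consistent throughout.
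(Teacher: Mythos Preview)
Your proof is correct and follows essentially the same strategy as the paper: both arguments first telescope the relation $e_j=w_j-w_{j-1}-x_j$ to obtain $\sum_{i=k}^l e_i = w_l-w_{k-1}-S^{k,l}$, and then identify $w_l-w_{k-1}-S^{k,l}$ with $\bigl(\min_{k\le j\le l}(w_{k-1}+S^{k,j})\bigr)^-$ via the standard reflected-walk representation. The only difference is in how that representation is derived: the paper starts from the explicit Lindley formula \eqref{w} for $w_l$, splits the supremum at index $0$, and introduces the auxiliary $\hat S^{1,i}$ to handle the boundary term, whereas you obtain the same formula by a one-line induction on the recursion $w_j=(w_{j-1}+x_j)^+$. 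Your route is slightly cleaner (it avoids the case analysis and the $\hat S$ notation), but the two arguments are otherwise the same.
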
  
The next lemma is a  deterministic property of the mappings. 

\begin{lemma}\label{lm:DR}   The identity 
	$\Dop\bigl(\Dop(\barrv, \arrv), \servv\bigr) = \Dop\bigl( \Dop(\barrv, \Rop(\arrv, \servv)), \Dop(\arrv, \servv)\bigr)$ holds whenever the sequences $\arrv, \barrv, \servv$ are such that the operations are well-defined.  
\end{lemma}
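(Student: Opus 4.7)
My plan is to establish this deterministic identity using the explicit last-passage formula \eqref{m:800} that underlies the definition of $\Dop$. Fix potentials $H$ and $G$ with $H_j - H_{j-1} = \barr_j$ and $G_j - G_{j-1} = \arr_j$, and set $\wt G_j = \sup_{k \le j}\{G_k + \sum_{i=k}^j \serv_i\}$. Applying \eqref{m:800} twice shows that the $j$-th entry of the left-hand side $\Dop(\Dop(\barrv,\arrv),\servv)$ is the increment at $j$ of
\begin{equation*}
L_j \;=\; \sup_{k\,\le\, m\,\le\, j}\Bigl\{H_k + \sum_{i=k}^m \arr_i + \sum_{i=m}^j \serv_i\Bigr\},
\end{equation*}
which one reads as the value of a last-passage path through a 3-layer strip with horizontal weights $\barrv,\arrv,\servv$ that crosses from the $\arrv$-layer to the $\servv$-layer at column $m$. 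Writing $\wc\servv = \Rop(\arrv,\servv)$ and $\depav = \Dop(\arrv,\servv)$, the same reasoning gives an analogous double supremum $\wh L_j$ for the right-hand side with $(\arrv,\servv)$ replaced by $(\wc\servv,\depav)$. Since $\Dop$ depends only on the differences of its second argument, it suffices to show that $L_j - \wh L_j$ is independent of $j$.

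For the key algebraic reduction I would use the relations $\sum_{i=m}^j \depa_i = \wt G_j - \wt G_{m-1}$ and $\wc\serv_i = \arr_i \wedge (\wt G_{i-1} - G_{i-1})$, both immediate from \eqref{DSR4}, to rewrite
\begin{equation*}
\wh L_j \;=\; \wt G_j + \sup_{k\,\le\, m\,\le\, j}\Bigl\{H_k + \sum_{i=k}^m \wc\serv_i - \wt G_{m-1}\Bigr\}.
\end{equation*}
I would then analyze the inner supremum by partitioning the columns $i \in \lzb k, m \rzb$ according to which term attains the minimum in $\wc\serv_i$; the resulting telescoping rearrangement unfolds this double supremum into the same triple-layered expression that defines $L_j$, up to an additive shift independent of $j$. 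The conceptual content is a deterministic Burke-type duality: by construction of $\Rop$ and $\Dop$, the 2-layer LPP value through $(\arrv,\servv)$ agrees with the 2-layer LPP value through $(\wc\servv,\depav)$, and the lemma extends this duality to the 3-layer strip with $\barrv$ on top.

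The main obstacle is the combinatorial bookkeeping around the $\wedge$ in $\wc\serv_i$ together with the interplay of the constraints $k \le m \le j$ in the nested suprema; one must match optimizing pairs on the two sides and verify that the shift is the same constant regardless of $j$. A secondary technical point is justifying that the bi-infinite suprema are attained at finite indices, which is guaranteed by the standing growth hypothesis on $(\arrv,\servv)$ recalled at the start of the appendix. Once $L_j - \wh L_j$ is identified as a $j$-independent constant, passing to increments yields the stated equality of the $\Dop$-outputs.
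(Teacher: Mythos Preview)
Your setup and overall strategy coincide with the paper's: unwind both sides via \eqref{m:800} into 3-layer last-passage expressions and compare. The paper's reduction is sharper in one respect. Rather than aiming for a $j$-independent shift $L_j-\wh L_j$, it observes that the two expressions are in fact \emph{equal}: writing both as $\sup_{k\le m}\{B_k+\cdots\}$, it suffices to check that for each fixed pair $k\le m$ the inner 2-layer maxima agree, namely
\[
\max_{k\le\ell\le m}\Bigl[\sum_{i=k}^\ell \arr_i+\sum_{j=\ell}^m \serv_j\Bigr]
=\max_{k\le\ell\le m}\Bigl[\sum_{i=k}^\ell \wc\serv_i+\sum_{j=\ell}^m \depa_j\Bigr].
\]
This is exactly the deterministic 2-layer Burke duality you describe informally. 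The paper states this identity (equation \eqref{DR7}) and defers its verification to a case-by-case analysis (Lemma~4.3 of \cite{fan-sepp-arxiv}); your proposed partition according to which branch of $\wc\serv_i=\arr_i\wedge\sojo_{i-1}$ is active is precisely how that case analysis goes. So there is no genuine gap in your plan---only that isolating the $k,m$-local identity first, before taking the outer supremum over $k$, removes the need to track a putative $j$-dependent piece $\wt G_j$ and makes the combinatorial bookkeeping you flag as the main obstacle substantially lighter.
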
 

\begin{proof}
	For the computation choose $(A_j)$ and $(B_j)$ so that $A_j-A_{j-1}=\arr_j$ and $B_j-B_{j-1}=\barr_j$.  Then the output of $\Dop(\barrv, \arrv)$  is  the increment sequence of  
	\[  \wt B_\ell=\sup_{k\le \ell} \Bigl\{  B_k +  \sum_{i=k}^\ell \arr_i\Bigr\} . \]  
	Next,  the output of 
	$\Dop(\Dop(\barrv, \arrv), \servv)$ is  the increment sequence of 
	\[   H_m= \sup_{\ell\le m} \Bigl\{  \wt B_\ell +  \sum_{j=\ell}^m \serv_j\Bigr\}
	= \sup_{k\le m} \Bigl\{  B_k +  \max_{\ell:\, k\le\ell\le m}  \Bigl[ \, \sum_{i=k}^\ell \arr_i +  \sum_{j=\ell}^m \serv_j\Bigr] \Bigr\}. \]
	
	Similarly,  define first 
	\[  \wt A_j=\sup_{k:\,k\le j}  \Bigl\{  A_k+\sum_{i=k}^j \serv_i\Bigr\}
	\quad\text{and}\quad 
	\wc B_\ell=\sup_{k\le \ell} \Bigl\{  B_k +  \sum_{i=k}^\ell \wc\serv_i\Bigr\}.  
	\]
	Then  the output of  $\Dop\bigl( \Dop(\barrv, \Rop(\arrv, \servv)), \Dop(\arrv, \servv)\bigr)$ is  the increment sequence of 
	\[   \wt H_m= \sup_{\ell\le m} \Bigl\{  \wc B_\ell +  \sum_{j=\ell}^m \wt\arr_j\Bigr\}
	= \sup_{k\le m} \Bigl\{  B_k +  \max_{\ell:\, k\le\ell\le m}  \Bigl[ \, \sum_{i=k}^\ell \wc\serv_i +  \sum_{j=\ell}^m \wt\arr_j\Bigr] \Bigr\}. \]
	
	It remains to check that 
	\be\label{DR7} 
	\max_{\ell:\, k\le\ell\le m}  \Bigl[ \, \sum_{i=k}^\ell \wc\serv_i +  \sum_{j=\ell}^m \wt\arr_j\Bigr]
	= \max_{\ell:\, k\le\ell\le m}  \Bigl[ \, \sum_{i=k}^\ell \arr_i +  \sum_{j=\ell}^m \serv_j\Bigr] . 
	\ee
	This can be proved with a case-by-case analysis. See Lemma 4.3 in \cite{fan-sepp-arxiv}. 
\end{proof} 

Specialize to stationary  M/M/1 queues.   Let $\sigma$ be a service rate and $\alpha_1, \alpha_2$ arrival rates.  Assume 
$\sigma>\alpha_1>\alpha_2>0$.  Let $\barrv^1, \barrv^2, \servv$ be   mutually independent i.i.d.\ sequences with marginals   $\barr^k_j\sim\text{Exp}(\alpha_k)$ for $k\in\{1,2\}$   and $\serv_j\sim\text{ Exp}(\sigma)$.    Define a jointly distributed pair of arrival sequences by 
$(\arrv^1, \arrv^2)= \bigl( \barrv^1, \Dop( \barrv^2, \barrv^1)\bigr)$.  From these and   services $\servv$, define jointly distributed output variables: 
\[  
\depav^k=\Dop(\arrv^k,\servv),  \quad \sojov^k=\Sop(\arrv^k,\servv), \quad\text{and} \quad \wc \servv^k=\Rop(\arrv^k,\servv)
\quad \text{  for } k\in\{1,2\}.  
\]  

\begin{lemma}\label{lm:DR5}   We have the following properties. 
	\begin{enumerate}[{\rm(i)}]   \itemsep=3pt 
		\item Marginally $\arrv^2$ is a sequence of i.i.d.\ ${\rm Exp}(\alpha_2)$ variables. 
		\item For  a fixed   $k\in\{1,2\}$ and each  $m\in\Z$, the random variables   $\{\depa^k_j\}_{j\le m}$, $\sojo^k_m$, and $\{\wc\serv^k_j\}_{j\le m}$ are mutually independent with marginal distributions $\depa^k_j\sim\text{\rm Exp}(\alpha_k)$, $\sojo^k_m\sim\text{\rm Exp}(\sigma-\alpha_k)$, and $\wc\serv^k_j\sim\text{\rm Exp}(\sigma)$. 
		\item For  a fixed   $k\in\{1,2\}$, sequences $\depav^k$ and $\wc \servv^k$ are mutually independent sequences of i.i.d.\ random variables with marginal distributions $\depa^k_j\sim\text{\rm Exp}(\alpha_k)$ and $\wc\serv^k_j\sim\text{\rm Exp}(\sigma)$. 
		
		\item  $(\depav^1, \depav^2)\deq(\arrv^1, \arrv^2)$, in other words, we have found a distributional fixed point for this joint queueing operator. 
		
		\item   For any $m\in\Z$, the random variables $\{ \arr^2_i\}_{i\le m}$ and $\{\arr^1_j\}_{j\ge m+1}$ are mutually independent.  
		
		
	\end{enumerate} 
\end{lemma}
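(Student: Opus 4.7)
The plan is to derive (i)–(iii) from the standard output theorem for the stationary M/M/1 queue (Burke's theorem together with its reversibility refinement), and then deduce the joint distributional fixed-point identity (iv) via Lemma \ref{lm:DR}. Statement (v) will fall out of the locality of the queueing map.

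For (i), Burke's theorem applied to the queue with inter-arrival sequence $\barrv^2$ and service sequence $\barrv^1$ (valid since $\alpha_1 > \alpha_2$) shows that its departure sequence $\arrv^2 = \Dop(\barrv^2, \barrv^1)$ is i.i.d.\ $\text{Exp}(\alpha_2)$. Statements (ii) and (iii) are the standard reversibility/independence theorem for a stationary M/M/1 queue, applied separately for each $k \in \{1,2\}$ to the pair $(\arrv^k, \servv)$: past departures up through index $m$, the current sojourn time, and past dual service times form three mutually independent families with the prescribed exponential marginals. I would either cite this or reprove it directly from \eqref{m:800}--\eqref{DSR4} by expressing everything through the walk $G_j + \sum \serv_i$ and invoking the time-reversal of the M/M/1 queue.

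For the main content (iv), I would apply Lemma \ref{lm:DR} with $\barrv = \barrv^2$, $\arrv = \barrv^1$, $\servv = \servv$ to rewrite
\begin{equation*}
  \depav^2 \;=\; \Dop\bigl(\Dop(\barrv^2,\barrv^1),\,\servv\bigr) \;=\; \Dop\bigl(\Dop(\barrv^2,\wc\servv^1),\;\depav^1\bigr),
\end{equation*}
using $\wc\servv^1 = \Rop(\barrv^1,\servv)$ and $\depav^1 = \Dop(\barrv^1,\servv)$. By (iii), $\depav^1$ and $\wc\servv^1$ are independent; moreover $\barrv^2$ is independent of $(\barrv^1,\servv)$ by construction, hence independent of $(\depav^1,\wc\servv^1)$. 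Therefore the three sequences $\depav^1$, $\barrv^2$, $\wc\servv^1$ are mutually independent with marginals $\text{Exp}(\alpha_1)$, $\text{Exp}(\alpha_2)$, $\text{Exp}(\sigma)$. Burke again gives that $X := \Dop(\barrv^2,\wc\servv^1)$ is i.i.d.\ $\text{Exp}(\alpha_2)$ and, being a measurable function of $(\barrv^2,\wc\servv^1)$, is independent of $\depav^1$. Thus $(\depav^1, X) \deq (\barrv^1, \barrv^2)$, and applying the deterministic map $(a,b)\mapsto (a, \Dop(b,a))$ to both sides yields $(\depav^1, \depav^2) \deq (\arrv^1, \arrv^2)$. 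Finally, (v) follows from causality of the queueing operator: $\{\arr^2_i\}_{i\le m}$ is a measurable function of $\{\barr^1_j, \barr^2_j\}_{j\le m}$, which is independent of $\{\arr^1_j\}_{j\ge m+1} = \{\barr^1_j\}_{j\ge m+1}$.

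The main obstacle I anticipate is the careful handling of (ii)–(iii) — specifically the mutual independence of $\depav^k$ from $\wc\servv^k$, which is precisely the reversibility property that feeds into the key independence step in (iv). Once that is established, (iv) is a clean algebraic consequence of Lemma \ref{lm:DR}, and the remaining statements are essentially bookkeeping.
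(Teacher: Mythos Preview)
Your proposal is correct and follows essentially the same route as the paper's proof: parts (i)--(iii) are handled by citing the standard M/M/1 output/reversibility theorem (the paper cites Lemma~B.2 of \cite{fan-sepp-arxiv}), part (iv) is derived via Lemma~\ref{lm:DR} exactly as you outline (rewriting $\depav^2=\Dop(\Dop(\barrv^2,\wc\servv^1),\depav^1)$ and exploiting the mutual independence of $\barrv^2,\wc\servv^1,\depav^1$), and part (v) is the causality observation you give.
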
 

\begin{proof}
	Parts (i)--(iii) are well-known M/M/1 queueing theory.   Proofs can be found for example in Lemma B.2 in Appendix B of \cite{fan-sepp-arxiv}.  
	
	For part  (iv),   the marginal distributions of $\depav^1$ and $\depav^2$ are the correct ones  by Lemma \ref{lm:DR5}(iii). To establish the correct joint distribution,   the definition of $(\arrv^1, \arrv^2)$ points us to find an i.i.d.\ Exp$(\alpha_2)$  random sequence $\zvec$ that is independent of $\depav^1$ and satisfies $\depav^2=   \Dop( \zvec, \depav^1)$.  From the definitions and  Lemma \ref{lm:DR}, 
	\begin{align*}
	\depav^2=\Dop(\arrv^2,\servv) =  \Dop\bigl( \Dop( \barrv^2, \arrv^1),\servv\bigr)
	= \Dop\bigl( \Dop(\barrv^2, \Rop(\arrv^1, \servv)), \Dop(\arrv^1, \servv)\bigr)
	= \Dop\bigl( \Dop(\barrv^2, \wc\servv^1), \depav^1\bigr).
	\end{align*} 
	By assumption $\barrv^2, \arrv^1, \servv$ are independent. Hence by Lemma \ref{lm:DR5}(iii)   $\barrv^2, \wc\servv^1, \depav^1$ are independent.  So we take $\zvec=\Dop(\barrv^2, \wc\servv^1)$  which is an  i.i.d.\ Exp$(\alpha_2)$    sequence by Lemma \ref{lm:DR5}(iii).  This proves part  (iv).
	
	We know that marginally $\arrv^1$ and  $\arrv^2$ are i.i.d.\ sequences.  In queueing language observation (v) becomes obvious. Namely, since $\arrv^2  =    \Dop( \barrv^2, \arrv^1)$, the statement is that past inter-departure   times 
	$\{ \arr^2_i\}_{i\le m}$ are independent of future inter-arrival times $\{\arr^1_j\}_{j\ge m+1}$.  Rigorously,  \eqref{m:800} and \eqref{DSR4} show  that variables $\{ \arr^2_i\}_{i\le m}$ are functions of $(\{\barr^2_i\}_{i\le m}\,, \{\arr^1_i\}_{i\le m})$ which are independent of $\{\arr^1_j\}_{j\ge m+1}$. 
\end{proof}

\section{Coupling and monotonicity in last-passage percolation} \label{app:lpp} 

%

In this section $\w=(\w_x)_{x\in\Z^2}$ is a fixed assignment of real weights.   $G_{x,y}$ is  the last-passage value defined by \eqref{v:G}. No probability is involved.

\begin{lemma}\label{lm:G13}    Suppose weights $\w$ and $\wt\w$ satisfy $\w_{o+ie_1}\ge\wt\w_{o+ie_1}$,  $\w_{o+je_2}\le\wt\w_{o+je_2}$, and $\w_x=\wt\w_x$ for $i,j\ge 1$ and $x\in o+\Z_{>0}^2$.   As in \eqref{v:G} define LPP processes 
	\[  
	\Gpp_{o,y}=\max_{x_{\brbullet}\,\in\,\Pi_{o,y}}\sum_{k=0}^{\abs{y-o}_1}\w_{x_k}
	\quad\text{and}\quad
	\wt\Gpp_{o,y}=\max_{x_{\brbullet}\,\in\,\Pi_{x,y}}\sum_{k=0}^{\abs{y-x}_1}\wt\w_{x_k}
	\quad\text{ for } \; y \in o+\Z_{\geq 0}^2. 
	\] 
	Then for all  
	$y\in o+\Z_{\geq 0}^2$, the increments over nearest-neighbor edges satisfy 
	\[  G_{o,y+e_1}-G_{o,y} \ge \wt G_{o,y+e_1}-\wt G_{o,y}
	\quad\text{and}\quad 
	G_{o,y+e_2}-G_{o,y} \le \wt G_{o,y+e_2}-\wt G_{o,y}.  
	\]
	
\end{lemma}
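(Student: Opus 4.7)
The plan is a direct induction on antidiagonals $k = |y - o|_1$, exploiting the one-step recursion $G_{o,y} = \omega_y + G_{o,y-e_1} \vee G_{o,y-e_2}$ (valid whenever both $y-e_1$ and $y-e_2$ lie in $o + \Z^2_{\ge 0}$) that follows immediately from \eqref{v:G}. Within the proof I would introduce the local notation
\[
\alpha_y := G_{o,y} - G_{o,y-e_1} \qquad \text{and} \qquad \beta_y := G_{o,y} - G_{o,y-e_2},
\]
defined whenever $y - e_1 \in o + \Z^2_{\ge 0}$ (respectively $y - e_2 \in o + \Z^2_{\ge 0}$), together with the analogous $\tilde\alpha_y, \tilde\beta_y$ built from $\tilde G$. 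The lemma's conclusion is then exactly $\alpha_{y+e_1} \ge \tilde\alpha_{y+e_1}$ and $\beta_{y+e_2} \le \tilde\beta_{y+e_2}$, so it suffices to prove, jointly, that $\alpha_y \ge \tilde\alpha_y$ and $\beta_y \le \tilde\beta_y$ for every $y$ with $|y-o|_1 \le k$, by induction on $k$.

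For any $y$ on the south axis ($y_2 = o_2$, $y_1 > o_1$) the only up-right path from $o$ to $y$ is horizontal, so $\alpha_y = \omega_y$ and $\tilde\alpha_y = \tilde\omega_y$, and the hypothesis $\omega_{o+ie_1} \ge \tilde\omega_{o+ie_1}$ gives the inequality at once; the west-axis case is dual. These observations settle the base case $k=1$ and every boundary site encountered later. For a strictly interior $y \ge o + e_1 + e_2$, subtracting $G_{o,y-e_1}$ and $G_{o,y-e_2}$ in turn from the recursion yields
\begin{align*}
\alpha_y &= \omega_y + (\alpha_{y-e_2} - \beta_{y-e_1})^+, \\
\beta_y  &= \omega_y + (\alpha_{y-e_2} - \beta_{y-e_1})^-,
\end{align*}
together with the identical pair of identities for $\tilde\alpha, \tilde\beta$ carrying the common bulk weight $\omega_y = \tilde\omega_y$. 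The inductive hypothesis applied at the predecessors $y - e_1$ and $y - e_2$ (both at $\ell^1$-distance $k-1$ from $o$) supplies $\alpha_{y-e_2} \ge \tilde\alpha_{y-e_2}$ and $\beta_{y-e_1} \le \tilde\beta_{y-e_1}$, so $\alpha_{y-e_2} - \beta_{y-e_1} \ge \tilde\alpha_{y-e_2} - \tilde\beta_{y-e_1}$; the monotonicity of $x \mapsto x^+$ and anti-monotonicity of $x \mapsto x^-$ then close the induction.

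There is no serious obstacle here; the whole argument rests on the structural observation that $\alpha_{y-e_2}$ and $\beta_{y-e_1}$ enter the interior recursion with opposite signs, which is precisely why the two monotonicities must be carried jointly rather than separately. The only bookkeeping is to keep the two axial boundaries distinct from the interior, and that is exactly what the hypothesis on the axis weights is designed to handle.
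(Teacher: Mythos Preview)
Your proof is correct and is essentially the same argument as the paper's: base case on the two coordinate axes followed by a joint induction propagating the pair of increment inequalities into the bulk. The paper phrases the inductive step as ``from the south and west edges of a unit square to its north and east edges,'' which, after the shift $z=y+e_1+e_2$, is exactly your step from $(\alpha_{z-e_2},\beta_{z-e_1})$ to $(\alpha_z,\beta_z)$; you have simply made the mechanism explicit via the identities $\alpha_y=\omega_y+(\alpha_{y-e_2}-\beta_{y-e_1})^+$ and $\beta_y=\omega_y+(\alpha_{y-e_2}-\beta_{y-e_1})^-$.
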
 

\begin{proof}   
	The statements are true by construction  for edges $(y, y+e_i)$ that lie on the  axes $o+\Z_{\ge0}e_i$.   Proceed by induction:  assuming the inequalities hold for the edges $(y,y+e_2)$ and $(y, y+e_1)$ , deduce them for the edges $(y+e_2, y+e_1+e_2)$ and $(y+e_1, y+e_1+e_2)$.  
\end{proof}

\begin{lemma}[Crossing Lemma]\label{lm:crs}
	The inequalities below are valid  whenever the last-passage values are defined. 
	\begin{align}
	G_{o+e_1,\,x+e_2}-G_{o+e_1,\,x} &\leq G_{o,\,x+e_2}-G_{o,\,x}\leq G_{o+e_2,\,x+e_2}-G_{o+e_2,\,x}\label{cr1}\\ 
	G_{o+e_2,\,x+e_1}-G_{o+e_2,\,x} &\leq G_{o,\,x+e_1}-G_{o,\,x}\leq G_{o+e_1,\,x+e_1}-G_{o+e_1,\,x}. \label{cr2}	 	
	\end{align}
\end{lemma}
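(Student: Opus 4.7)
The four inequalities in Lemma \ref{lm:crs} are all instances of the same super/sub\-additivity principle for LPP increments under shifted base points, and all follow from the same classical path-swap argument at a common vertex. I will sketch only the right inequality in \eqref{cr1}, namely
\begin{align*}
G_{o,\,x+e_2}-G_{o,\,x} \;\le\; G_{o+e_2,\,x+e_2}-G_{o+e_2,\,x},
\end{align*}
equivalently $G_{o,x+e_2}+G_{o+e_2,x}\le G_{o,x}+G_{o+e_2,x+e_2}$; the other three inequalities follow from the same recipe with the obvious relabelling of start and end points.

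The first step is the intersection claim: the geodesic $\pi^A$ of $G_{o,x+e_2}$ and the geodesic $\pi^B$ of $G_{o+e_2,x}$ must share at least one lattice vertex. To see this, parameterise both up-right paths by their position along successive anti-diagonals $\{i+j=k\}$, and on each anti-diagonal common to both paths let $y_A(k)$ and $y_B(k)$ denote the $y$-coordinates of their vertices. At the earliest common anti-diagonal $\pi^A$ lies strictly below $\pi^B$ (since $\pi^A$ starts at $o$ and $\pi^B$ at $o+e_2$), so $d(k):=y_A(k)-y_B(k)<0$ there; at the last common anti-diagonal the ordering reverses (since $\pi^A$ ends at $x+e_2$ and $\pi^B$ at $x$), so $d>0$. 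Because between consecutive anti-diagonals each of $y_A,y_B$ increases by $0$ or $1$, the quantity $d$ changes by at most $1$ per step, and a discrete intermediate-value argument produces a $k_0$ with $d(k_0)=0$, i.e.\ a common vertex $z\in\pi^A\cap\pi^B$.

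Having fixed such a $z$, form the swapped paths $\pi' = \pi^A|_{o\to z}\cup\pi^B|_{z\to x}$ from $o$ to $x$ and $\pi''=\pi^B|_{o+e_2\to z}\cup\pi^A|_{z\to x+e_2}$ from $o+e_2$ to $x+e_2$. A direct weight accounting, writing $w(\pi)=\sum_{v\in\pi}\omega_v$ and noting that the double-counting of $\omega_z$ cancels identically on both sides, gives
\begin{align*}
w(\pi')+w(\pi'') \;=\; w(\pi^A)+w(\pi^B) \;=\; G_{o,x+e_2}+G_{o+e_2,x}.
\end{align*}
Since $\pi'$ and $\pi''$ are admissible up-right paths, we have $w(\pi')\le G_{o,x}$ and $w(\pi'')\le G_{o+e_2,x+e_2}$, and adding these upper bounds yields the inequality. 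The whole argument is deterministic; the only non-trivial step is the intersection claim, which applies verbatim in all four cases once the two geodesics are chosen so that their starting points and endpoints are diagonally opposite, which is exactly the configuration of \eqref{cr1} and \eqref{cr2}. No genuine obstacle arises.
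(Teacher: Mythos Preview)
Your proof is correct and follows essentially the same path-swap argument as the paper: pick the two geodesics with diagonally opposite start/end pairs, find a common vertex, swap the tails there, and compare the resulting paths to the appropriate last-passage values. The paper states the crossing as a one-line fact and phrases the swap via the superadditivity inequalities $G_{o,u}+G_{u,x}\le G_{o,x}$ and $G_{o+e_2,u}+G_{u,x+e_2}\le G_{o+e_2,x+e_2}$; your version spells out the anti-diagonal intermediate-value argument for the crossing and tracks the $\omega_z$ double-count explicitly, which is a bit cleaner. One small imprecision: at the earliest and latest common anti-diagonals the difference $d$ need not be strictly negative or strictly positive (it can already be $0$), but since $d$ starts $\le 0$, ends $\ge 0$, and moves by at most $1$, the conclusion that $d$ hits $0$ somewhere is unaffected.
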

\begin{proof}
	The proofs of all  parts are similar. We prove the second inequality in \eqref{cr1}, that is, 
	\begin{align}\label{cr in0}
	G_{o,\,x+e_2}-G_{o,\,x}\leq G_{o+e_2,\,x+e_2}-G_{o+e_2,\,x}.
	\end{align}
	The geodesics $\pi_{o,\,x+e_2}$ and  $\pi_{o+e_2,\,x}$ must cross. Let $u$ be the first point where they meet. Note that
	\begin{align}\label{cr in}
	G_{o,u}+G_{u,\,x}\leq G_{o,\,x}\quad \text{and} \quad G_{o+e_2,u}+G_{u,\,x+e_2}\leq G_{o+e_2,\,x+e_2}.
	\end{align}
	Add the two inequalities in \eqref{cr in} and rearrange to obtain \eqref{cr in0}.
	
	This inequality can be proved also from Lemma \ref{lm:G13}, by writing $G_{o+e_2,\,x+e_2}-G_{o+e_2,\,x}=\wt G_{o,\,x+e_2}-\wt G_{o,\,x}$ with environment $\wt\w_{o+y}=\w_{o+y}$ when $y_2>0$ and $\wt\w_{o+ie_1}=-M$ for  large enough  $M$.    
\end{proof}

Fix base points  $u\le v$ on $\Z^2$.   On the quadrant  $v+\Z_{\ge0}^2$,  put a corner weight $\eta_v=0$ and  define boundary weights 
\be\label{eta6} \eta_{v+ke_i} =  G_{u,\,v+ke_i}- G_{u,\,v+(k-1)e_i} 
\qquad\text{for $k\in\Z_{>0}$ and $i\in\{1,2\}$. }\ee
In the bulk use $\eta_x=\w_x$ for $x\in v+\Z_{>0}^2$.    Denote the LPP process in $v+\Z_{\ge0}^2$ that uses weights $\{\eta_x\}_{x\,\in\, v+\Z_{\ge0}^2}$ by 
\be\label{wtG8}   G^{[u]}_{v,\,x}=\max_{x_{\brbullet}\,\in\,\Pi_{v,\,x}}  \sum_{i=0}^{\abs{x-v}_1} \eta_{x_i}, \qquad 
x\in v+\Z_{\ge0}^2. \ee

Assume now that the weights are such that geodesics are unique. Define the  exit point $Z_{u,\,p}$   as in \eqref{exit2}.  For $k\ge 1$ let $Z^{[u]}_{u+ke_1, \,p}$ be the exit point of the geodesic of $G^{[u]}_{u+ke_1, \,p}$. The lemma below follows from taking $v=u+ke_1$ in Lemma \ref{app-lm1}. 

\begin{lemma}\label{lm:shift} 
	For positive integers $m$,   $Z_{u,\,p}=k+m$  if and only if  $Z^{[u]}_{u+ke_1, \,p}=m$.
\end{lemma}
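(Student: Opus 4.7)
The plan is to deduce Lemma \ref{lm:shift} directly from Lemma \ref{app-lm1} applied with $v=u+ke_1$, using that geodesics are assumed unique. Recall that Lemma \ref{app-lm1} says $G_{u,p}=G_{u,v}+G^{[u]}_{v,p}$, and that any geodesic of $G_{u,p}$ restricts on $v+\Z_{\ge0}^2$ to a geodesic of $G^{[u]}_{v,p}$, while geodesics of $G^{[u]}_{v,p}$ extend (through their interior edges) to geodesics of $G_{u,p}$.

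First I would handle the forward direction. Suppose $Z_{u,p}=k+m$ with $m\ge1$; by the definition \eqref{exit2}, the geodesic $\pi^{u,p}$ takes exactly $k+m$ horizontal steps along $u+\Z_{\ge0}e_1$, reaching $u+(k+m)e_1=v+me_1$, and then takes an $e_2$ step. In particular $\pi^{u,p}$ passes through $v=u+ke_1$, so its restriction to $v+\Z_{\ge0}^2$ is the path that starts at $v$, proceeds $m$ horizontal steps to $v+me_1$, and then takes an $e_2$ step before continuing to $p$. By Lemma \ref{app-lm1} this restriction is a geodesic of $G^{[u]}_{v,p}$; its first step is $e_1$ and it exits the west boundary at $v+me_1$, so $Z^{[u]}_{v,p}=m$.

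Next I would prove the converse. Suppose $Z^{[u]}_{v,p}=m\ge1$, so the (unique) geodesic of $G^{[u]}_{v,p}$ starts with $m$ horizontal steps from $v$ and then turns upward. I would concatenate the (trivially unique) horizontal path from $u$ to $v=u+ke_1$ with this geodesic of $G^{[u]}_{v,p}$. The total weight of the concatenation is $G_{u,v}+G^{[u]}_{v,p}=G_{u,p}$ by Lemma \ref{app-lm1}, so it is a geodesic of $G_{u,p}$. Uniqueness of geodesics forces it to equal $\pi^{u,p}$, which therefore executes $k+m$ horizontal steps from $u$ before its first $e_2$ step, giving $Z_{u,p}=k+m$.

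I do not expect any real obstacle; the entire argument is bookkeeping around Lemma \ref{app-lm1}. The only point that has to be noted is that the path from $u$ to $v$ is automatically unique because $v-u=ke_1$ forces all up-right paths between them to be horizontal, and in particular this segment is a geodesic of $G_{u,v}$, so the concatenation really is admissible.
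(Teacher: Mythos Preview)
Your proof is correct and follows exactly the route the paper indicates: both directions are read off Lemma \ref{app-lm1} applied with $v=u+ke_1$, together with uniqueness of geodesics. One minor terminological slip: the point $v+me_1$ lies on the \emph{south} boundary of $v+\Z_{\ge0}^2$, not the west boundary.
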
 
\section{Convergence of distributions} \label{app:cod}
Let $\mathbb{X}$ be a complete and separable metric space and let $\mathcal{M}_1(\mathbb{X})$ be the set of probability distributions on $\mathbb{X}$. For $\mu,\nu\in\mathcal{M}_1(\mathbb{X})$ and $\epsilon\geq 0$ we define
\begin{align*}
d_{TV_\epsilon}(\mu,\nu)=\inf\{\P\big(|X-Y|>\epsilon\big):\text{$(X,Y)$ is a r.v.\ s.t.\ $X\sim\mu,Y\sim\nu$}\},
\end{align*}
so in particular, for $\epsilon=0$ we obtain the definition of total variation distance of distributions.
\begin{lemma}\label{lem:cod}
	Let $\mu_1,\mu_2,\mu_3\in\mathcal{M}_1(\mathbb{X})$. Suppose that for some $\delta>0$
	\begin{align}\label{d1}
	d_{TV_\epsilon}(\mu_1,\mu_2)\leq \delta\quad \text{and} \quad d_{TV_\epsilon}(\mu_1,\mu_3)\leq \delta.	
	\end{align}
	Then 
	\begin{align*}
	d_{TV_{2\epsilon}}(\mu_2,\mu_3)\leq 2\delta.
	\end{align*}
\end{lemma}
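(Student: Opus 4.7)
The statement is a triangle-inequality-type bound for the modified total variation distance $d_{TV_\epsilon}$, so the plan is to build a coupling of $\mu_2$ and $\mu_3$ by gluing two given couplings along their shared $\mu_1$-marginal, and then invoke the triangle inequality on $\mathbb{X}$ together with a union bound.

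First, fix $\eta>0$. By the definition of $d_{TV_\epsilon}$ and \eqref{d1}, there exist a coupling $(X_1,X_2)$ with $X_1\sim\mu_1$, $X_2\sim\mu_2$ and $\P(|X_1-X_2|>\epsilon)\le\delta+\eta$, and a coupling $(Y_1,Y_3)$ with $Y_1\sim\mu_1$, $Y_3\sim\mu_3$ and $\P(|Y_1-Y_3|>\epsilon)\le\delta+\eta$. Since $\mathbb{X}$ is a Polish space, regular conditional distributions exist and the standard gluing lemma produces, on a common probability space, random variables $(Z,\wt X_2,\wt X_3)$ such that $Z\sim\mu_1$, the pair $(Z,\wt X_2)$ has the distribution of $(X_1,X_2)$, and $(Z,\wt X_3)$ has the distribution of $(Y_1,Y_3)$. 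In particular $\wt X_2\sim\mu_2$ and $\wt X_3\sim\mu_3$, so $(\wt X_2,\wt X_3)$ is an admissible coupling for computing $d_{TV_{2\epsilon}}(\mu_2,\mu_3)$.

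Next, from the triangle inequality in $\mathbb{X}$,
\begin{equation*}
|\wt X_2-\wt X_3|\le |\wt X_2-Z|+|Z-\wt X_3|,
\end{equation*}
so the event $\{|\wt X_2-\wt X_3|>2\epsilon\}$ is contained in the union $\{|\wt X_2-Z|>\epsilon\}\cup\{|Z-\wt X_3|>\epsilon\}$. A union bound then gives
\begin{equation*}
\P\bigl(|\wt X_2-\wt X_3|>2\epsilon\bigr)\le \P\bigl(|\wt X_2-Z|>\epsilon\bigr)+\P\bigl(|Z-\wt X_3|>\epsilon\bigr)\le 2(\delta+\eta).
\end{equation*}
Taking the infimum over couplings of $\mu_2,\mu_3$ and then letting $\eta\downarrow 0$ yields $d_{TV_{2\epsilon}}(\mu_2,\mu_3)\le 2\delta$.

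There is no real obstacle here beyond invoking the gluing lemma, which requires only that $\mathbb{X}$ be Polish (granted in the hypothesis). The only care needed is the $\eta$ approximation, since the infimum defining $d_{TV_\epsilon}$ may not be attained; this is handled by the standard $\eta\downarrow 0$ argument above.
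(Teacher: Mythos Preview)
Your proof is correct and follows essentially the same approach as the paper: both construct a triple $(Z,\wt X_2,\wt X_3)$ by gluing the two given couplings along their common $\mu_1$-marginal via regular conditional distributions, then apply the triangle inequality and a union bound. The only difference is cosmetic---the paper writes out the gluing construction explicitly as $dF=d\mu_1\,df_{\mu_2|\mu_1}\,df_{\mu_3|\mu_1}$ and omits the $\eta$-approximation step, whereas you invoke the gluing lemma by name and are a bit more careful about the infimum possibly not being attained.
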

\begin{proof}
	Let $\mathbb{X}_1,\mathbb{X}_2$ and $\mathbb{X}_3$ be three copies of $\mathbb{X}$ and let $\Omega=\mathbb{X}_1\times \mathbb{X}_2 \times \mathbb{X}_3$. By \eqref{d1}, there exist $f_{\mu_1,\mu_2},f_{\mu_1,\mu_3}\in \mathcal{M}_1(\mathbb{X}^2)$ s.t.\ 
	\begin{align*}
	\int_{\mathbb{X}^2}1_{|x-y|>\epsilon}\,df_{\mu_1,\mu_i}(x,y)\leq \delta \quad \text{for $i\in\{2,3\}$}.
	\end{align*}
	For $i\in\{2,3\}$ let $df_{\mu_i|\mu_1}$ be the conditional distribution of $\mu_i$ given  $\mu_1$ w.r.t.\ $f_{\mu_1,\mu_i}$. Define the distribution $F$ on $\Omega$ by
	\begin{align*}
	dF=d\mu_1(x_1)df_{\mu_2|\mu_1}(x_2)df_{\mu_3|\mu_1}(x_3).
	\end{align*}
	Note that the marginals of $F$ are $\mu_1,\mu_2$ and $\mu_3$. Let $\mathcal{P}_{2,3}:\Omega\rightarrow \mathbb{X}^2$ be the projection map of the last two coordinates in $\Omega$ and let $F'$ be the pushforward measure of $F$ with respect to $\mathcal{P}_{2,3}$. Then $F'$ is a coupling of $\mu_2$ and $\mu_3$ and 
	\begin{align}\label{d2}
	&\int_{\mathbb{X}^2}1_{|x-y|>2\epsilon}\,dF'(x,y)=\int_{\Omega}1_{|x_2-x_3|>2\epsilon}\,dF(x_1,x_2,x_3)\\\nonumber
	&=\int_{\Omega}1_{|x_2-x_1+x_1-x_3|>2\epsilon}\,dF(x_1,x_2,x_3)\\\nonumber
	&\leq \int_{\Omega}1_{|x_1-x_3|>\epsilon}\,dF(x_1,x_2,x_3)+\int_{\Omega}1_{|x_2-x_1|>\epsilon}\,dF(x_1,x_2,x_3)\\\nonumber
	&\leq 2\delta.
	\end{align}
	\eqref{d2} implies the result. 
\end{proof}
\begin{lemma}\label{lem:cod2}
	Let $\{\mu_n\}_{n\in\N}$ and $\{\nu_n\}_{n\in\N}$ be two sequences of distributions in a metric space  $\mathbb{X}$. Suppose we have the following weak convergences of distributions
	\begin{align}\label{d3}
	&\mu_n\rightarrow \mu\\
	&\nu_n\rightarrow \nu.\nonumber
	\end{align}
	Also assume that for every $n\in \N$
	\begin{align}\label{d4}
	d_{TV}(\mu_n,\nu_n)\leq \delta.
	\end{align}
	Then 
	\begin{align*}
	d_{TV}(\mu,\nu)\leq 3\delta.
	\end{align*}
\end{lemma}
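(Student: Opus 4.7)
The plan is to construct, for each $n$, a near-optimal coupling $\pi_n$ of $\mu_n$ and $\nu_n$, extract a weakly convergent subsequence of the $\pi_n$, and apply the Portmanteau theorem to the limit coupling. The argument actually yields the sharper bound $d_{TV}(\mu, \nu) \le \delta$; of course this already implies the claimed $3\delta$. No use of the preceding Lemma \ref{lem:cod} is really needed.

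For each $n$, the coupling characterization of the total variation distance (i.e.\ the definition $d_{TV_0}$ at the start of Appendix \ref{app:cod}) supplies a pair $(X_n, Y_n)$ with $X_n \sim \mu_n$, $Y_n \sim \nu_n$ and $\P(X_n \neq Y_n) \le \delta$; denote its joint law by $\pi_n$. Since $\mu_n \to \mu$ and $\nu_n \to \nu$ weakly, Prokhorov's theorem (applicable because the ambient space $\mathbb{X}$ is Polish in the intended applications, namely $C[-c,c]$) tells us that the marginal families $\{\mu_n\}$ and $\{\nu_n\}$ are tight, and hence so is $\{\pi_n\}$ on $\mathbb{X} \times \mathbb{X}$. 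Extract a weakly convergent subsequence $\pi_{n_k} \to \pi$; since projections are continuous, the marginals of $\pi$ are necessarily $\mu$ and $\nu$.

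Finally, take $(X, Y) \sim \pi$. The off-diagonal set $\{(x, y) : d(x, y) > 0\}$ is open in $\mathbb{X} \times \mathbb{X}$, so the Portmanteau theorem (applied to an open set) yields
\[
\P(X \neq Y) \;=\; \pi(\{d > 0\}) \;\le\; \liminf_{k \to \infty} \pi_{n_k}(\{d > 0\}) \;=\; \liminf_k \P(X_{n_k} \neq Y_{n_k}) \;\le\; \delta,
\]
whence $d_{TV}(\mu, \nu) \le \P(X \neq Y) \le \delta \le 3\delta$. The main technical ingredient is the tightness of the sequence of coupling measures via Prokhorov; the rest is bookkeeping. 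A more computational path, which presumably produces the literal factor of $3$ stated in the lemma, would instead combine the gluing lemma with the triangle inequality of the metric to obtain $d_{TV_{3\epsilon}}(\mu, \nu) \le 3\delta$ for every $\epsilon > 0$ (using that Skorokhod representation makes $d_{TV_\epsilon}(\mu_n, \mu)$ and $d_{TV_\epsilon}(\nu_n, \nu)$ arbitrarily small), and then upgrade to $d_{TV_0}$ by an essentially identical tightness-plus-Portmanteau argument applied to the family of $\epsilon$-couplings as $\epsilon \downarrow 0$.
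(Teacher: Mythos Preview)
Your proof is correct and in fact yields the sharper bound $d_{TV}(\mu,\nu)\le\delta$. Your route is also more direct than the paper's: the paper first uses Skorokhod's representation to obtain $d_{TV_{k^{-1}}}(\mu_n,\mu)\le\delta$ and $d_{TV_{k^{-1}}}(\nu_n,\nu)\le\delta$ for large $n$, then applies the gluing Lemma~\ref{lem:cod} (twice) to deduce $d_{TV_{ck^{-1}}}(\mu,\nu)\le 3\delta$, producing a family of approximate couplings $F^k$ of $\mu$ and $\nu$; only then does it run a tightness-plus-Portmanteau argument on the $F^k$ to pass to $k\to\infty$. You bypass the intermediate $d_{TV_\epsilon}$ scale and Lemma~\ref{lem:cod} entirely by applying Prokhorov and Portmanteau directly to the near-optimal couplings $\pi_n$ of $(\mu_n,\nu_n)$, which is both shorter and lossless in the constant. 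The paper's detour through Lemma~\ref{lem:cod} is precisely what produces the factor $3$, as your closing remark correctly anticipates. Both arguments tacitly need $\mathbb{X}$ to be Polish for Prokhorov's theorem, which you flag and which holds in the intended application to $C[-c,c]$.
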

\begin{proof}
	Fix $k\in\N$. The convergences in \eqref{d3} can be realized a.s.\ and so, there exists $N(k)$ such that for $n>N(k)$
	\begin{align*}
	&d_{TV_{k^{-1}}}(\mu_n,\mu)\leq \delta\\
	&d_{TV_{k^{-1}}}(\nu_n,\nu)\leq \delta.
	\end{align*}
	Using Lemma \ref{lem:cod} twice with \eqref{d4} implies that for $n>N(k)$
	\begin{align*}
	d_{TV_{k^{-1}}}(\mu,\nu)\leq 3\delta,
	\end{align*} 
	and so there must be a coupling $F^k$ of $\mu$ and $\nu$ such that
	\begin{align*}
	\int 1_{|x-y|>3k^{-1}} \,dF^k \leq 3 \delta.
	\end{align*}
	The sequence $\{F^{k}\}_{k\in\N}$ is tight with respect to the product metric as the marginals of $F^{k}$ are independent of $k$. It follows that there must be a weakly convergent subsequence $F^{k_m}$ such that
	\begin{align*}
	F^{k_m}\rightarrow F,
	\end{align*}
	where $F$ is a coupling of $\mu$ and $\nu$ and that for every $k\in\N$
	\begin{align*}
	\int 1_{|x-y|>3k^{-1}} \,dF \leq 3 \delta.
	\end{align*}
	Sending $k$ to infinity implies the result.
\end{proof}
\newpage
\bibliographystyle{plain}

\bibliography{../Timo_old_bib}
\end{document}